\numberwithin{equation}{section}
\newtheorem{theorem}[equation]{Theorem}
\newtheorem{lemma}[equation]{Lemma}
\newtheorem{question}[equation]{Question}
\newtheorem{corollary}[equation]{Corollary} 
\theoremstyle{definition}
\newtheorem{definition}[equation]{Definition}
\theoremstyle{remark}
\newtheorem{remark}[equation]{Remark}
\newcommand{\curnode}[2]{\alpha(#1,#2)}
\newcommand{\converge}{\!\downarrow}
\newcommand{\diverge}{\!\uparrow}
\newcommand{\cat}{\mathbin{\hat{}}}
\newcommand*{\lh}[1]{\lvert#1\rvert}
\newcommand*{\pair}[2]{\mathopen{\langle} #1, #2 \mathclose{\rangle}}
\DeclareMathOperator \dom{dom}
\newcommand{\reset}[2]{\mathfrak{r}(#1,#2)}
\newcommand{\Lstg}[2]{{#2}_{#1}}
\newcommand{\rstrn}[3][]{r_{#1}(#2,#3)}
\newcommand*{\conv}[1][]{\mathpunct{\downarrow}_{#1}}
\newcommand*{\@recthy@llangle}{\mathopen{\langle\!\langle}}
\newcommand*{\@recthy@rrangle}{\mathclose{\rangle\!\rangle}}
\newcommand*{\@recthy@EmptyStr}{\lambda}
\newcommand*{\@recthy@concatSYM}{\widehat{}}
\newcommand*{\restr}[1]{\mathpunct{\restriction_{#1}}}
\newcommand*{\map}[2]{:#1\mapsto #2}
\newcommand*{\functo}[3]{\ensuremath{#1\map{#2}{#3}}}
\newcommand*{\bstrs}{2^{<\omega}}
\newcommand*{\cantor}{2^{\omega}}
\def\cantor*{\left(2\union \diverge \right)^{\omega}}
\newcommand*{\baire}{\omega^{\omega}}
\def\baire*{\left(\omega \union \diverge \right)^{\omega}}
\newcommand*{\@recthy@TSYM}{\mathbf{T}}
\newcommand*{\Tequiv}{\mathrel{\equiv_{\@recthy@TSYM}}}
\newcommand*{\nTequiv}{\mathrel{\ncong_{\@recthy@TSYM}}}
\newcommand*{\Tlneq}{\lneq_{\@recthy@TSYM}}
\newcommand*{\Tleq}{\leq_{\@recthy@TSYM}}
\newcommand*{\Tgneq}{\gneq_{\@recthy@TSYM}}
\newcommand*{\Tgeq}{\geq_{\@recthy@TSYM}}
\newcommand*{\Tgtr}{>_{\@recthy@TSYM}}
\newcommand*{\Tless}{<_{\@recthy@TSYM}}
\newcommand*{\nTleq}{\nleq_{\@recthy@TSYM}}
\newcommand*{\nTgeq}{\ngeq_{\@recthy@TSYM}}
\newcommand*{\Tdegjoin}{\mathbin{\vee_{\@recthy@TSYM}}}
\newcommand*{\Tdegmeet}{\mathbin{\wedge_{\@recthy@TSYM}}}
\newcommand*{\TsetJoin}{\mathbin{\oplus}}
\def\TsetJoin[#1][#2]{\mathop{\bigoplus}^{#2}_{#1}}
\newcommand*{\Tdeg}[1]{\utilde{#1}}
\let\jump=\Tjump
\newcommand*{\jjump}[1]{#1''}
\newcommand*{\Tzerosym}{0}
\newcommand*{\zerojj}{\jump{\jump{\Tdeg{\Tzerosym}}}}
\newcommand*{\REset}{\let\@PMG@parenarg\@PMG@undefined\let\@PMG@braketarg\@PMG@undefined\@REsetbody}
\newcommand*{\@REsetbody}[1]{W\ifdefined\@PMG@parenarg
^{\@PMG@parenarg}%
\fi%
\ifdefined\@PMG@braketarg
_{#1, {\@PMG@braketarg}}%
\else
_{#1}
\fi}
\def\@REsetbody(#1){\def\@PMG@parenarg{#1}\@REsetbody}
\def\@REsetbody[#1]{\def\@PMG@braketarg{#1}\@REsetbody}
\def\exists[#1]{\left(\exists\, #1 \right)\!}
\def\forall[#1]{\left(\forall\, #1 \right)\!}
\newcommand*{\str}[1]{\mathopen{\@recthy@llangle}#1\mathclose{\@recthy@rrangle}}
\newcommand*{\EmptyStr}{\@recthy@EmptyStr}
\newcommand*{\estr}{\EmptyStr}
\newcommand*{\concat}{\mathbin{\@recthy@concatSYM}}
\def\concat[#1]{\concat\relax#1}
\newcommand*{\set}[2]{\ifthenelse{\isempty{#2}}{\left \{  #1 \right \}}{\left \{ #1 \middle | #2\right \}} }
\newcommand*{\card}[1]{\lvert#1\rvert}
\newcommand*{\union}{\mathbin{\cup}}
\newcommand*{\isect}{\mathbin{\cap}}
\newcommand*{\eset}{\emptyset}
\newcommand*{\nin}{\mathrel{\not\in}}
\newcommand*{\recfnlSYM}{\Phi}
\newcommand*{\murec}[2]{\ensuremath{\mu#1\left( #2 \right) }}
\newcommand*{\recfnl}[4][]{\recfnlSYM_{#2\ifthenelse{\isempty{#1}}{}{,#1} }%
	\ifthenelse{\isempty{#4}}%
	{\ifthenelse{\isempty{#3}}%
		{}
		{(#3)}
	}
	{\ifthenelse{\isempty{#3}}{(#4)}{(#3;#4)}%
}}
\journal{Annals of Pure and Applied Logic}
\begin{document}

\begin{frontmatter}

	\title{On $n$-Tardy Sets}

	\author[und]{Peter~A.~Cholak\corref{cor1}\fnref{fn1}}


	\ead{Peter.Cholak.1@nd.edu}
	\ead[url]{http://www.nd.edu/~cholak}


	\cortext[cor1]{Corresponding author} 

	\fntext[fn1]{Cholak was partially supported by NSF DMS-0652669 and  NSF-DMS-0800198.}

	\author[und]{Peter M. Gerdes\fnref{fn2}}


	\ead{gerdes@invariant.org}
	\ead[url]{http://invariant.org}
	\fntext[fn2]{Gerdes was partially supported by NSF EMSW21-RTG-0739007 and EMSW21-RTG-0838506.}


	\author[und]{Karen Lange\fnref{fn3}}


	\ead{klange1@nd.edu}
	\ead[url]{http://www.nd.edu/~klange1/}


	\fntext[fn3]{Lange was partially supported by NSF DMS-0802961.}

	\address[und]{Department of Mathematics\\ University of Notre Dame\\ 
	  Notre Dame, IN 46556-5683}

\begin{abstract} 
  Harrington and Soare introduced the notion of an $n$-tardy set.
  They showed that there is a nonempty $\mathcal{E}$ property $Q(A)$
  such that if $Q(A)$ then $A$ is $2$-tardy.  Since they also showed
  no $2$-tardy set is complete, Harrington and Soare showed that there
  exists an orbit of computably enumerable sets such that every set in that orbit is
  incomplete.  Our study of $n$-tardy sets takes off from where
  Harrington and Soare left off. We answer all the open questions
  asked by Harrington and Soare about $n$-tardy sets.  We show there
  is a $3$-tardy set $A$ that is not computed by any $2$-tardy set
  $B$. We also show that there are nonempty $\mathcal{E}$ properties
  $Q_n(A)$ such that if $Q_n(A)$ then $A$ is properly $n$-tardy.
\end{abstract}

\begin{keyword}
c.e.\ sets \sep r.e. sets  \sep automorphisms \sep \( n \)-tardy sets

\MSC[2010] 03D25 \sep 03D30
\end{keyword}

\end{frontmatter}

\section{Introduction}

Let $\mathcal{E}$ denote the structure of c.e.\ sets under the language of inclusion.  Understanding the interplay between computability and  definability in $\mathcal{E}$ is  a longstanding area of research in classical computability theory.  In 1944 \cite{Post:RE-sets-and-deciscion-problems}, Post set out to find an incomplete noncomputable c.e.\ set, i.e., a noncomputable c.e.\ set that does not have the degree of the halting problem $K$.   He defined several properties of c.e.\ sets (such as {\em simplicity}) in the hope that no c.e.\ set satisfying one of these properties could be complete. All of the properties he suggested failed to satisfy this condition, but many of them  are definable in $\mathcal{E}$.   Although Friedberg and Muchnik \cite{Mucnik:56:incomplete-degree,Friedberg:57:incomplete-degree}  famously obtained an incomplete noncomputable c.e.\ set using a priority argument, a natural question is whether  there exists an $\mathcal{E}$-definable nontrivial property $Q$ such that if $Q(A)$ holds, then $A$ is an incomplete noncomputable c.e.\ set.  Harrington and Soare produced such a property $Q$ in \cite{posts-program-and-incomplete-recursively-enumerable-sets}, and they also described an $\mathcal{E}$-definable property that guarantees completeness (See \cite{SoareBook}, p. 339 and \cite{definability-automorphisms-and-dynamic-properties-of-computably-enumerable}).  These results are part of work by many towards the following general goal.

\begin{question}\label{Q:AuttoComp}
Characterize what sets are and are not automorphic to a complete set.  
\end{question}

Harrington and Soare showed that all sets that satisfy $Q$ are {\em $2$-tardy} \cite{codable-sets-and-orbits-of-computably-enumerable-sets}, a slowness condition that we describe, along with the conditions {\em $n$-tardy} and {\em very tardy}, in \S \ref{SS:tardy}.  The very tardy sets, by definition, are those that are not almost prompt, and all complete sets are prompt.    All $n$-tardy  sets are  very tardy and, hence,  incomplete.  Thus, any $A$ for which $Q(A)$ holds is not automorphic to a complete set.  On the other hand, Harrington and Soare \cite{the-delta-0-3-automorphism-method}, building on  work of the first author, Downey, and Stob \cite{automorphisms-of-the-lattice-of-recursively-enumerable-sets-promptly}, proved that every almost prompt set (i.e., every not very tardy set) is automorphic to a complete set.  Thus, in order to work towards answering Question \ref{Q:AuttoComp}, we explore the very tardy sets and their orbits from the perspectives of computability and definability.  We begin by defining the almost prompt sets.

\subsection{Almost prompt}\label{SS:prompt}

\begin{definition}
  A set $X$ is $n$-c.e.\ iff there is a computable sequence of c.e.\
  sets $\{X_i:1\leq i \leq n\}$ such that
  \begin{equation*}
    X = (X_1 - X_2) \cup \ldots \cup (X_{n-2} - X_{n-1})\cup X_n \mbox{ if $n$ is odd, and }
         \end{equation*}
      \begin{equation*}
         X = (X_1 - X_2) \cup \ldots \cup (X_{n-1} - X_{n}) \mbox{ if $n$ is even.}
     \end{equation*}
  The sequence of sets $\{X_i:1\leq i \leq n\}$ is  an $n$-c.e.\
  presentation of $X$.  Such a sequence can be used to give a
  stagewise approximation of $X$: (for the case when $n$ is even)
 \begin{equation*}
    X_s = (X_{1,s} - X_{2,s}) \cup \ldots \cup (X_{n-1,s} - X_{n,s}) 
  \end{equation*}
  Such a sequence is denoted $X^n_e$,  and $X^n_{e,s}$ denotes the stagewise approximation
  given by the sequence.
\end{definition}

\begin{definition}[Definition 11.3 of \cite{the-delta-0-3-automorphism-method}]
  Let $A$ be a c.e.\ set and $\{A_s\}$ be an enumeration of $A$.  The set $A$ is
  \emph{almost prompt} iff there is a nondecreasing function $p(s)$
  such that for all $n$ and all $e$
  \begin{equation}\label{eq:1}
    X^n_e = \overline{A} \implies (\exists x ) ( \exists s) [ x 
    \in X^n_{e,s} 
    \wedge x \in A_{p(s)}].
  \end{equation}
\end{definition}

Harrington and Soare \cite{the-delta-0-3-automorphism-method} showed that this definition is robust. That is, if Equation~(\ref{eq:1}) holds for some enumeration of $A$, it holds for all enumerations of $A$ (see \cite[Theorem~11.4]{the-delta-0-3-automorphism-method}). They also proved that any c.e.\ set of prompt degree is almost prompt (see \cite[Theorem~11.7]{the-delta-0-3-automorphism-method}); thus, the notion of almost prompt generalizes the notion of prompt. They also showed that almost prompt sets are ubiquitous in the following sense.

\begin{theorem}[Harrington, Soare,  Theorem~11.12 \cite{the-delta-0-3-automorphism-method}]\label{T:HSalmostprompt}
There are almost prompt sets of every c.e.\ degree.  
\end{theorem}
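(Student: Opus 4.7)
First observe that the statement must be read for \emph{nonzero} c.e.\ degrees: if $A$ is computable then $\overline{A}$ is itself c.e., so taking $X^1_e=\overline{A}$ we have $X^1_{e,s}\subseteq\overline{A}$, which is disjoint from $A\supseteq A_{p(s)}$, making the conclusion of Equation~\eqref{eq:1} unattainable at $n=1$. So fix a c.e.\ set $D$ of given nonzero c.e.\ degree $\mathbf{d}$; the plan is to build a c.e.\ set $A$ with $A\equiv_T D$ and $A$ almost prompt via a finite-injury $D$-permitting priority construction with promptness function $p(s)=s+1$.

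Partition $\omega$ into evens $E$ and odds $O$. Use $E$ to code $D$: enumerate $2x$ into $A$ at the stage $x$ enters $D_s$. Partition $O$ into pairwise disjoint infinite computable columns $R^{\langle i,n\rangle}=\{r^{i,n}_0<r^{i,n}_1<\cdots\}$, one per promptness requirement
\[
R_{\langle i,n\rangle}\colon\quad X^n_i=\overline{A}\;\Longrightarrow\;(\exists x)(\exists s)\bigl[x\in X^n_{i,s}\wedge x\in A_{p(s)}\bigr].
\]
At each stage $s+1$, for every $\langle i,n\rangle\leq s$, let $k$ be least with $r^{i,n}_k\in X^n_{i,s+1}\setminus A_s$; enumerate $r^{i,n}_k$ into $A_{s+1}$ provided additionally that some $y\leq r^{i,n}_k$ enters $D_{s+1}\setminus D_s$ (``$D$ permits below $r^{i,n}_k$'').

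The reduction $D\leq_T A$ is immediate from the coding on $E$, and $A\leq_T D$ follows from the standard permitting analysis: to decide $r^{i,n}_k\in A$, use the $D$-oracle to find a stage past which $D\restriction r^{i,n}_k$ no longer changes, and then directly check whether $r^{i,n}_k$ was enumerated by that stage. For almost promptness suppose, for contradiction, that some $R_{\langle i,n\rangle}$ fails: $X^n_i=\overline{A}$ yet no $r^{i,n}_k$ is ever enumerated. Then each $r^{i,n}_k$ lies in $\overline{A}=X^n_i$, and by the $n$-c.e.\ bound on mind-changes each $r^{i,n}_k$ stabilizes in $X^n_{i,s}$ from some stage $t_k$ onward; the failure to permit at every stage $\geq t_k$ forces $D\restriction r^{i,n}_k=D_{t_k}\restriction r^{i,n}_k$.

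The main obstacle is converting this pointwise settling into a genuine contradiction with $\mathbf{d}>\mathbf{0}$, since the stabilization stages $t_k$ are a priori only $\emptyset'$-computable in~$k$. The fix exploits the uniform $n$-c.e.\ bound: having infinitely many markers per column, a pigeonhole over $k$ isolates a cofinite subsequence on which the observable number of mind-changes is constant, and on that subsequence the bounds $t_k$ become computable; combined with $D\restriction r^{i,n}_k=D_{t_k}\restriction r^{i,n}_k$ this recovers $D$ computably, contradicting $\mathbf{d}>\mathbf{0}$. This last step is the delicate point of the argument and requires the full use of the $n$-c.e.\ structure; with it, the usual finite-injury bookkeeping produces the desired $A$.
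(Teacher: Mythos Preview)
Your direct permitting construction has a genuine gap. As written you act at each stage only on the \emph{least} $k$ with $r^{i,n}_k\in X^n_{i,s+1}\setminus A_s$. But then, once $r^{i,n}_0$ has permanently entered $X^n_i$ (at stage $t_0$, say), that least index is $0$ at every later stage and you never again attempt any $r^{i,n}_k$ with $k>0$. Consequently the only information recorded is that $D\restriction r^{i,n}_0$ is fixed after $t_0$; the claimed equality $D\restriction r^{i,n}_k=D_{t_k}\restriction r^{i,n}_k$ simply fails for $k>0$, and no pigeonhole on mind-changes can recover the missing data about $D$ above $r^{i,n}_0$. The repair is to attempt \emph{every} $k$ at each stage; with that change the desired equality does hold for all $k$ and $A\leq_T D$ is unaffected. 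Your pigeonhole step also needs tightening: a fixed number of mind-changes is achieved only on an \emph{infinite}, not cofinite, set of $k$'s. It suffices, however, to take as non-uniform parameters $m^*=\limsup_k m_k$ and the finite set $F=\{k:m_k>m^*\}$; then a dovetailing search for $k\notin F$ exhibiting $m^*$ entries of $r^{i,n}_k$ into $X^n_i$ terminates (since $\{k:m_k=m^*\}$ is infinite) and yields a correct stage $t_k$, making $D$ computable and giving the contradiction.

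The paper does not prove this cited result itself; it only records that Harrington and Soare established it by an entirely different route: every low simple set is almost prompt, and every nonzero c.e.\ degree contains a low simple set. That decomposition cleanly separates the degree control (a classical finite-injury construction with no speed constraint) from the dynamic content (lowness plus simplicity already forces almost promptness). Your direct approach, once repaired, is more self-contained and avoids the semi-low machinery, at the cost of the mind-change bookkeeping you flagged; theirs is modular but rests on the deeper implication for low simple sets. Your opening remark that the statement must exclude degree~$\mathbf{0}$ is correct under the definitions given here.
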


Moreover, they showed that there are tardy (i.e., not of prompt degree) sets $A$ such that every degree Turing above $A$ is almost prompt, (see \cite[Theorem~11.8]{the-delta-0-3-automorphism-method}) and that the join of an almost prompt set and any computably enumerable set is almost prompt (see \cite[Theorem~11.11]{the-delta-0-3-automorphism-method}).

In order to show Theorem \ref{T:HSalmostprompt}, Harrington and Soare proved that every low simple set is almost prompt (see \cite[Theorem~11.10]{the-delta-0-3-automorphism-method}). They left the following question open:

\begin{question}[Question 1 of \cite{the-delta-0-3-automorphism-method}]\label{Q:Q1HS}
  If $A$ is low$_2$ and simple, is $A$ almost prompt?
\end{question}

We provide a negative answer to Question \ref{Q:Q1HS} in \S \ref{S:low2simple},  but we first focus on particular classes of sets that are not almost prompt.

\subsection{Very tardy and \texorpdfstring{$n$}{n}-tardy sets}\label{SS:tardy}

A degree is {\em tardy} if it is not a prompt degree.  A set is {\em very tardy} if it is not almost prompt.  (Note that being very tardy is a property of sets and does not readily extend to degrees.)  Since the definition of almost prompt is robust, we have the following equivalent definition.  

\begin{definition}\label{D:verytardy}
  Let $A$ be c.e.\ and $\{A_s\}$ be an enumeration of $A$. The set $A$ is
  \emph{very tardy} iff $A$ is not almost prompt iff  for every
  nondecreasing computable function $p(s)$ there is an $n$ and an $e$ such that
  \begin{equation}\label{eq:2}
    X^n_e = \overline{A} \implies (\forall x ) ( \forall s) [ x \in
    X^n_{e,s} 
    \implies x \not\in A_{p(s)}].
  \end{equation}
  Moreover, $A$ is \emph{$n$-tardy} iff there is a  single $n$ that works for all such
  functions $p(s)$, and  $A$ is {\em properly $n$-tardy} if $A$ is $n$-tardy but not $n-1$-tardy.  
\end{definition}

As described in the introduction, Harrington and Soare proved the following theorem.  
\begin{theorem}[Harrington and Soare \cite{posts-program-and-incomplete-recursively-enumerable-sets}]
There exists an $\mathcal{E}$-definable nontrivial property $Q$ such that if $Q(A)$ holds, then $A$ is not automorphic to a complete set. 
\end{theorem}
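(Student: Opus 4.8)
The plan is to assemble three facts that Harrington and Soare have already established in the papers cited above, together with one soft observation about automorphisms. The three facts are: (i) there is a property $Q(A)$ that is definable in $\mathcal{E}$ by a formula in the language of inclusion alone and is satisfied by some noncomputable c.e.\ set \cite{posts-program-and-incomplete-recursively-enumerable-sets}; (ii) if $Q(A)$ holds then $A$ is $2$-tardy \cite{codable-sets-and-orbits-of-computably-enumerable-sets}; and (iii) no $2$-tardy set is complete. For (iii) I would argue: a $2$-tardy set is in particular very tardy (take $n=2$ in Definition~\ref{D:verytardy}), hence not almost prompt; but every c.e.\ set of prompt degree is almost prompt by \cite[Theorem~11.7]{the-delta-0-3-automorphism-method}, and a complete set has prompt degree, so a complete set is almost prompt and therefore not $2$-tardy.

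Next I would make the automorphism-invariance step precise. Using (i), fix a first-order formula $\psi(X)$ over $\{\subseteq\}$ with $\mathcal{E}\models\psi(A)$ if and only if $Q(A)$. If $\Phi\in\aut(\mathcal{E})$ and $B=\Phi(A)$, then since $\Phi$ is an automorphism of the structure $\mathcal{E}$ we have $\mathcal{E}\models\psi(A)$ if and only if $\mathcal{E}\models\psi(B)$; hence $Q(A)$ holds if and only if $Q(B)$ holds, so $Q$ is invariant under $\aut(\mathcal{E})$. Now suppose $Q(A)$ holds and $A$ is automorphic to some set $C$, say $C=\Phi(A)$. By invariance $Q(C)$ holds, so by (ii) $C$ is $2$-tardy, and so by (iii) $C$ is incomplete. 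Thus no set satisfying $Q$ is automorphic to a complete set, and $Q$ is nontrivial because by (i) it holds of a noncomputable c.e.\ set while, for instance, failing of $K$.

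I expect that the only point needing care on our side is verifying that $Q$ really is presented in a manifestly first-order way over $\subseteq$, so that the invariance step is legitimate; everything else above is bookkeeping. The genuine difficulty --- which this proof does not reproduce --- sits in the two cited inputs: the construction of the $\mathcal{E}$-definable coding property $Q$ in \cite{posts-program-and-incomplete-recursively-enumerable-sets}, a delicate priority argument, and the proof in \cite{codable-sets-and-orbits-of-computably-enumerable-sets} that the coding in $Q$ forces $2$-tardiness, where the slowness of $A$ is actually extracted from the coding. Those are the real obstacles; here one is merely packaging their consequences.
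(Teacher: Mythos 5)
Your proposal matches the paper's reasoning exactly: the paper itself treats this as an assembly of the same three Harrington--Soare ingredients --- the existence of an $\mathcal{E}$-definable nontrivial $Q$ from \cite{posts-program-and-incomplete-recursively-enumerable-sets}, the implication $Q(A)\Rightarrow A$ is $2$-tardy from \cite{codable-sets-and-orbits-of-computably-enumerable-sets}, and the chain $2$-tardy $\Rightarrow$ very tardy $\Rightarrow$ not almost prompt $\Rightarrow$ not of prompt degree $\Rightarrow$ incomplete --- followed by the observation that $\mathcal{E}$-definable properties are invariant under $\aut(\mathcal{E})$. Your handling of nontriviality (witnessed by a noncomputable c.e.\ set satisfying $Q$, and failing for $K$) and your use of \cite[Theorem~11.7]{the-delta-0-3-automorphism-method} to bridge promptness and almost-promptness are exactly the steps the paper's narrative relies on.
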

\noindent
More specifically, they showed the property $Q$  describes a subset of the $2$-tardy sets.  We need a few definitions in order to define this subset.

\begin{definition}\label{D:major}
\begin{enumerate}
\item Let $A\subset_\infty C$ denote that $A\subset C$ and $C-A$ is infinite.
\item A subset $A$ is a {\em major} subset of $C$ if $A\subset_\infty C$ and for all $e$, 
$$\bar{C}\subseteq W_e \implies \bar{A}\subseteq^* W_e.$$

\item A subset $A\subset C$ is a {\em small} subset of $C$ (written $A\subset_s C)$) if $A\subset_\infty C$ and for all $X$ and $Y$, 
$$X\cap(C-A)\subseteq Y\ \implies\
(\exists Z)_{Z\subseteq X}[Z\supseteq(X-C)\ \&\ (Z\cap C)\subseteq Y].$$

\item If $A$ is both a small subset and a major subset of $C$, we call $A$ a {\em small major} subset of $C$ and write $A\subset_{sm} C$.  
\end{enumerate}
\end{definition}

\begin{theorem}[Harrington and Soare \cite{codable-sets-and-orbits-of-computably-enumerable-sets}]
$Q(A) \iff$

\noindent
$ (\exists C)[A\subset_{sm} C\ \&\ $A$ \mbox{ is 2-tardy}].$

\end{theorem}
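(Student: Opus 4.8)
The theorem is a characterization of the $\mathcal{E}$-definable property $Q$, so the plan is to prove the two implications, in each case matching the ``internal'' lattice data supplied by $Q$ against the ``external'' dynamic notion of $2$-tardiness. The first step I would take is a reformulation of $2$-tardiness purely in terms of c.e.\ supersets of $\overline{A}$. A $2$-c.e.\ presentation of $\overline{A}$ is, up to the choices that do not affect timing, just a c.e.\ set $X_{1}\supseteq\overline{A}$ together with $X_{2}=X_{1}\cap A$, and the induced stagewise approximation is $X_{1,s}\setminus A_{s}$; hence $A$ is $2$-tardy iff for every nondecreasing computable $p$ there is a c.e.\ $W\supseteq\overline{A}$ such that any $x$ which appears in $W$ before entering $A$ is kept out of $A$ for at least $p$ further stages. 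In this form ``$2$-tardy'' is a statement about c.e.\ supersets of $\overline{A}$ with a delay requirement, which is exactly the shape that meshes with Definition \ref{D:major}: majorness of $A$ in $C$ controls c.e.\ supersets of $\overline{C}$ and hence, up to finite sets, of $\overline{A}$, and smallness of $A$ in $C$ is the device that lets one build such supersets lazily off $C$ while prescribing their behavior inside $C$.

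For the forward direction I would assume $Q(A)$. Since $Q$ already picks out a subclass of the $2$-tardy sets (by the theorem of Harrington and Soare cited above), $A$ is $2$-tardy and it remains only to produce a $C$ with $A\subset_{sm}C$. I would read $C$ off from the existential witnesses of the $\mathcal{E}$-formula defining $Q$, and then verify the major and small clauses of Definition \ref{D:major} from the remaining clauses of $Q$: majorness because the dynamic clause of $Q$ forces every c.e.\ superset of $\overline{C}$ eventually to absorb the $\overline{A}$-elements it tracks, and smallness because the splitting condition $X\cap(C-A)\subseteq Y\implies(\exists Z\subseteq X)[\,Z\supseteq X-C\ \&\ Z\cap C\subseteq Y\,]$ can be met by the same delayed-enumeration scheme that $Q$ encodes.

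For the reverse direction I would assume $A\subset_{sm}C$ and $A$ is $2$-tardy, and verify the $\mathcal{E}$-formula $Q(A)$ by exhibiting the witnesses it demands and checking their defining clauses against arbitrary c.e.\ parameters. Smallness is the workhorse here: confronted with an arbitrary c.e.\ $X$ in a universal clause of $Q$, I would use the small-subset property to replace $X$ by a $Z$ that agrees with $X$ off $C$ and is governed inside $C$ by a chosen $Y$, so that the clause need only be checked for such $Z$; for those, majorness ties the behavior of $\overline{A}\cap Z$ to that of $\overline{C}\cap Z$, and $2$-tardiness in the superset form above supplies the timing estimate, with $W$ the c.e.\ superset of $\overline{A}$ read off from $C$ together with the computable bound $p$ extracted from the clause. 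The finitely many exceptional points arising from the ``$\subseteq^{*}$'' in majorness are handled by hand.

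The main obstacle, in both directions, is the two-way translation between the enumeration-free lattice statement $Q$ and the enumeration-dependent notion of $2$-tardiness. In the forward direction the delicate part is verifying smallness, since the small-subset clause quantifies over all pairs $X,Y$ and must be extracted from the single dynamic requirement bundled into $Q$; in the reverse direction the delicate part is seeing that one fixed $\mathcal{E}$-clause of $Q$ is strong enough to encode ``for every computable $p$ there is a suitable delayed superset $W$'', which is precisely where the small-subset property pays off, letting the freedom to thin $X$ inside $C$ absorb the quantifier over computable bounds $p$. By comparison, the majorness arguments and the superset reformulation of $2$-tardiness should be routine.
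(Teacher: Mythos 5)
This theorem is stated in the paper purely as a background result imported verbatim from Harrington and Soare's paper on codable sets; the paper under review gives no proof of it, and its own contribution (Theorem \ref{T:Q_2nimptardy} and Theorem \ref{T:Q_nsatisfied}) concerns a generalization $Q_n$ rather than a reproof of the $n=2$ characterization. So there is no ``paper's own proof'' to compare your attempt against, and the appropriate answer would have been to observe that this is a cited lemma, or at most to sketch how the paper's Lemma \ref{lem:Q_2nimptardy} recovers the ``$Q(A)\Rightarrow A$ is $2$-tardy'' half when specialized to $n=1$.

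Setting that aside, your proposal cannot stand as a proof because it never commits to the actual $\mathcal{E}$-formula defining $Q$. Both of your directions are phrased in terms of ``the existential witnesses of the $\mathcal{E}$-formula defining $Q$'' and ``the dynamic clause of $Q$,'' but without the concrete formula (which in this paper's notation is $Q_2$, the $n=1$ case of the display defining $Q_{2n}$) these phrases do not determine a single step. All the genuine content of the theorem lies precisely in matching the alternation of set quantifiers in that formula against the quantifier over computable delay functions $p$, and your sketch defers exactly that matching. In addition, the preliminary normalization you rely on, that a $2$-c.e.\ presentation of $\overline A$ may be taken to have $X_2 = X_1\cap A$ ``up to choices that do not affect timing,'' is not automatic: replacing an arbitrary $X_2$ with $X_1\cap A$ generally changes the stagewise approximation $X^2_{e,s}$ and hence the tardiness condition $x\in X^2_{e,s}\Rightarrow x\notin A_{p(s)}$, so this reduction would itself require an argument. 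As written, the proposal is a plausible high-level plan but contains a gap at every point where the specific clauses of $Q$ would have to be invoked.
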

\noindent
Harrington and Soare used this characterization to show that any $A$ satisfying $Q(A)$ is not automorphic to a complete set.
\begin{definition}
The {\em orbit} of $A$, denoted by $[A]$, is the set of c.e.\ sets $B$ such that there exists an automorphism $\Psi$ of $\mathcal{E}$ sending $A$ to $B$.  
\end{definition}
\noindent
If $A$ satisfies $Q(A)$ and there is an automorphism $\Psi$ of $\mathcal{E}$, then $Q(\Psi(A))$ holds as well.  In other words, $Q$ holds of any element in $[A]$.  Since $Q$ holds of all sets in $[A]$, all sets in $[A]$ are 2-tardy and therefore incomplete.  Thus, if $A$ satisfies $Q(A)$, $A$ is not automorphic to a complete set.  

In \S \ref{S:Q_n}, we define nontrivial  properties $\hat{Q}_n$ that generalize $Q$.  In Theorem \ref{T:Q_2nimptardy}, we show that if $\hat{Q}_{n}(A)$ holds, then $A$ is $n$-tardy and $\neg\hat{Q}_i(A)$ holds for all $i<n$.  In Theorem \ref{T:Q_nsatisfied}, we show that  there is some properly $n$-tardy set $A_n$ for which  $\hat{Q}_n(A_n)$ holds.   Thus, the collection $\{[A_n]\}_{n\in\omega}$ witness that the c.e.\ sets that are not automorphic to a complete set break into  countably many disjoint orbits.

\subsection{Codeable sets}

In \cite{codable-sets-and-orbits-of-computably-enumerable-sets}, Harrington and Soare also explore the connection between 
tardiness and what sets $X$ are coded in every nontrivial orbit in the following sense.  

\begin{definition}[\cite{codable-sets-and-orbits-of-computably-enumerable-sets} Definition 1.3]
\begin{enumerate}
\item We say $X$ is {\em coded in the orbit of $A$}, denoted $X\le_T[A]$, if $X\le_T B$ for some $B\in[A]$.  
\item We say $X$ is {\em codeable} if for every noncomputable set $A$, $X\le_T[A]$.  
\end{enumerate}
\end{definition}

Harrington and Soare obtain the following characterization of  the codeable sets by using the $\Delta_3^0$-automorphism method they developed in \cite{definability-automorphisms-and-dynamic-properties-of-computably-enumerable}.  

\begin{theorem}[Harrington Soare \cite{codable-sets-and-orbits-of-computably-enumerable-sets} Corollary 1.8]\label{T:codeable}
A set is codeable iff $X\le_T D$ for some $D$ satisfying $Q(D)$.  

\end{theorem}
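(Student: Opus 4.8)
The statement is an equivalence, and its two directions are of quite different character; I will take $X$ to be the set in question (the ``$X$'' in the displayed condition). The direction ``$X$ codeable $\Rightarrow$ $X\le_T D$ for some $D$ with $Q(D)$'' is the soft one. The plan is to use that $Q$ is a \emph{nonempty} $\mathcal{E}$-definable property: Harrington and Soare \cite{posts-program-and-incomplete-recursively-enumerable-sets} produce a noncomputable c.e.\ set $D_0$ with $Q(D_0)$, and, as noted in the discussion of $Q$ above, every automorphism $\Psi$ of $\mathcal{E}$ satisfies $Q(D_0)\Rightarrow Q(\Psi(D_0))$. So if $X$ is codeable, apply codeability to the noncomputable set $D_0$: there is $B\in[D_0]$ with $X\le_T B$, say $B=\Psi(D_0)$ for an automorphism $\Psi$; then $Q(B)$ holds, and $D:=B$ witnesses $X\le_T D$ with $Q(D)$.

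For the converse, first observe that the class of codeable sets is closed downward under $\le_T$: if $X\le_T Y$ and $Y$ is codeable, then for each noncomputable c.e.\ $A$ there is $B\in[A]$ with $Y\le_T B$, hence $X\le_T B$, so $X$ is codeable. Thus it suffices to show that every $D$ with $Q(D)$ is itself codeable. Fix such a $D$ and an arbitrary noncomputable c.e.\ set $A$, and set out to build an automorphism $\Psi$ of $\mathcal{E}$ with $D\le_T\Psi(A)$.

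This is the real work, and the plan is to carry it out with the $\Delta^0_3$-automorphism method of \cite{definability-automorphisms-and-dynamic-properties-of-computably-enumerable}. That method builds $\Psi$ by an infinite-injury construction along a tree of strategies at the $\emptyset'''$ level, simultaneously enumerating a second copy $\hat{\mathcal{E}}$ of the c.e.\ sets together with a $\Delta^0_3$ isomorphism between the two copies; in particular one enumerates the target set $\hat A:=\Psi(A)$ oneself, subject to the matching and surjectivity requirements that force $\Psi$ to be an automorphism carrying $A$ to $\hat A$. Into this framework I would fold \emph{coding requirements}: reserve a computable sequence of pairwise disjoint finite ``markers'' and, at the appropriate stages, enumerate the $k$-th marker into $\hat A$ exactly when $k$ enters $D$ (possible since $D$ is c.e.). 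Then $k\in D$ iff the $k$-th marker is contained in $\hat A$, so $D\le_T\hat A=\Psi(A)$, which is exactly what codeability of $D$ requires. The hypotheses enter as follows: the noncomputability of $A$ supplies the structural richness needed for an automorphism image $\hat A$ of $A$ to accommodate the coding while still meeting the matching and surjectivity requirements, and the $2$-tardiness of $D$ --- equivalently, the small major superset furnished by $Q(D)$, i.e.\ some $C$ with $D\subset_{sm}C$ --- lets the markers be forced into $\hat A$ slowly enough to be consistent with the dynamics the construction imposes on images of c.e.\ sets.

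The step I expect to be the main obstacle is precisely this compatibility: showing that the coding requirements never permanently injure the requirements that make $\Psi$ a genuine automorphism. One must interleave the ``late'' enumeration that coding $D$ into $\hat A$ forces with the back-and-forth matching along the $\Delta^0_3$ tree and argue --- using the slowness of $D$ transmitted through the small-major-subset structure supplied by $Q(D)$ --- that each strategy on the tree is injured only finitely often and settles correctly. Everything else (downward closure of codeability under $\le_T$, automorphism-invariance of $Q$, and the combinatorics of the tree construction itself) is routine once this central tension is resolved.
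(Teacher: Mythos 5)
Theorem~\ref{T:codeable} is cited by the paper rather than proved: it is Harrington and Soare's Corollary~1.8 from~\cite{codable-sets-and-orbits-of-computably-enumerable-sets}, and the paper merely remarks that its proof uses the $\Delta_3^0$-automorphism method of~\cite{definability-automorphisms-and-dynamic-properties-of-computably-enumerable}. So there is no internal proof against which your attempt can be checked.

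That said, your forward direction is correct and standard: nonemptiness of $Q$, automorphism-invariance of $Q$, and the fact that $Q(D_0)$ (through $D_0\subset_m C$) forces $D_0$ noncomputable, so codeability may be applied to $D_0$ itself and the image $B=\Psi(D_0)$ still satisfies $Q$. For the reverse direction you correctly reduce to showing every $D$ with $Q(D)$ is codeable and correctly identify the $\Delta_3^0$-automorphism machinery as the engine, but two points deserve flags. First, your parenthetical that the $2$-tardiness of $D$ is ``equivalently'' the small-major-superset structure is a conceptual slip: $Q(D)$ is the \emph{conjunction} of $D$ being $2$-tardy and $D$ having a small major superset, and these are independent. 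The paper itself notes that Harrington and Soare prove the stronger statement that every $2$-tardy set is codeable, and that Theorem~\ref{T:codeable} uses only the implication $Q(D)\Rightarrow D$ is $2$-tardy; the $\subset_{sm}$ structure plays no role in the dynamics of the coding. Second, and more seriously, your paragraph on interleaving coding requirements into the automorphism tree is a plan, not an argument: the compatibility problem you yourself identify---synchronizing the slow enumeration of $D$ with the matching and surjectivity requirements so that the marker sets land inside $\hat A$ without permanently injuring any strategy---is the entire content of the hard direction, and nothing in the sketch discharges it. As a blind outline this is the right skeleton and matches the approach the cited source takes, but it does not constitute a proof.
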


Using Theorem \ref{T:codeable}, Harrington and Soare obtain the following simple corollary.

\begin{corollary}[Harrington Soare \cite{codable-sets-and-orbits-of-computably-enumerable-sets} Corollary 1.9]
If $S$ has prompt degree, then $S$ is not codeable.  
\end{corollary}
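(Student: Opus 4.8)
The plan is to prove the contrapositive: if $S$ is codeable, then $S$ does not have prompt degree. Assume $S$ is codeable. By Theorem~\ref{T:codeable} there is a c.e.\ set $D$ with $S\le_T D$ and $Q(D)$. By the Harrington--Soare characterization of $Q$, $D$ is $2$-tardy, and, as noted in \S\ref{SS:tardy}, every $n$-tardy set is very tardy; hence $D$ is very tardy, i.e.\ $D$ is \emph{not} almost prompt.

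Next I would apply the contrapositive of \cite[Theorem~11.7]{the-delta-0-3-automorphism-method}: since every c.e.\ set of prompt degree is almost prompt, the failure of almost promptness for $D$ shows that $D$ does not have prompt degree. Finally I would invoke the classical fact that the prompt degrees are closed upward in the c.e.\ degrees — they coincide with the non\nobreakdash-cappable c.e.\ degrees, and the cappable degrees form an ideal, so their complement is upward closed (concretely, if $\mathbf a\le\mathbf b$ with $\mathbf a$ non\nobreakdash-cappable and $\mathbf b$ c.e., then $\mathbf b\meet\mathbf c=\mathbf 0$ would force $\mathbf a\meet\mathbf c=\mathbf 0$, contradicting non\nobreakdash-cappability of $\mathbf a$). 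Since $D$ is c.e.\ and $S\le_T D$, if $S$ had prompt degree then so would $D$, contradicting the previous step. Hence $S$ does not have prompt degree, which is what was to be shown.

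I do not anticipate a real obstacle: the corollary is essentially immediate once one assembles Theorem~\ref{T:codeable}, Theorem~11.7 of \cite{the-delta-0-3-automorphism-method}, and the upward closure of the prompt degrees. The only point requiring a short argument rather than a direct quotation from the material above is that last upward-closure fact; everything else is substitution into the quoted results, which is exactly why Harrington and Soare list this as a ``simple corollary.''
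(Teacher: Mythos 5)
Your proof is correct and is essentially the argument the paper has in mind: the paper itself gives no proof, merely noting the corollary follows from Theorem~\ref{T:codeable}, and your chain — codeable $\Rightarrow$ $S\le_T D$ with $Q(D)$ $\Rightarrow$ $D$ is $2$-tardy $\Rightarrow$ $D$ very tardy $\Rightarrow$ $D$ not almost prompt $\Rightarrow$ (Theorem~11.7 of \cite{the-delta-0-3-automorphism-method}) $D$ not of prompt degree $\Rightarrow$ (upward closure of prompt $=$ non-cappable c.e.\ degrees) $S$ not of prompt degree — is exactly the assembly the authors intend. The one ingredient you flag as needing justification, the upward closure of the prompt degrees among the c.e.\ degrees, is indeed an external fact (Ambos-Spies, Jockusch, Shore, Soare: prompt $=$ non-cappable $=$ low-cuppable), and your one-line verification via the ideal of cappable degrees is right; implicitly you also use that ``$S$ has prompt degree'' forces $\deg(S)$ to be a c.e.\ degree, so both $\deg(S)$ and $\deg(D)$ live in the c.e.\ degrees where this closure applies, which is worth stating explicitly.
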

\noindent
Harrington and Soare in fact show that a set is codeable iff $X\le_T D$ for some $2$-tardy $D$; Theorem \ref{T:codeable} only uses the fact that if $Q(D)$ holds, then $D$ is 2-tardy.  Thus, the ability to code in the above sense is more connected to enumeration speed than degree-theoretic content.  Given this observation, it is natural to wonder whether all very tardy sets are codeable.  Harrington and Soare asked a more specific version of this problem:

\begin{question}[Harrington Soare \cite{the-delta-0-3-automorphism-method} Question~1]
  Are all $3$-tardy sets codeable?  
\end{question}

By Theorem \ref{T:codeable}, this is is equivalent to the following question.  

\begin{question}\label{Q:3tardy}
  If $A$ is $3$-tardy, does there exist a  $2$-tardy set $B$ such that $A
  \leq_T B$?
\end{question}

Let $A$ be $2$-tardy.  If $A_0 \sqcup A_1 = A$ is a nontrivial split,
then each of the $A_i$ are $3$-tardy.  Given $p(s)$ and $X^2_e$ such that 
Equation~(\ref{eq:2}) holds for $A$, then $p(s)$ and $X^3_{\tilde{e}}
= (X^2_{e_1} - X^2_{e_2}) \sqcup A_{\bar{i}}$ witnesses
Equation~(\ref{eq:2}) holds for $A_i$, where $\bar{0} = 1$ and
$\bar{1}=0$.  Prior to this work, it was unknown whether every $3$-tardy is
the split of a $2$-tardy.  If this was the case, then clearly every $3$-tardy would be computable from  the  $2$-tardy of which it is a split, and hence would be codeable.  In \S \ref{S:3tardy}, we show that not all $3$-tardy sets are splits of $2$-tardy sets.  In fact, we answer Question \ref{Q:3tardy} negatively.  We show that there exists a $3$-tardy set that is not computed by any $2$-tardy.  Hence, not all $3$-tardy (and very tardy) sets are codeable.

\section{A \texorpdfstring{$3$}{3}-tardy not computed by any \texorpdfstring{$2$}{2}-tardy}\label{S:3tardy}

We devote this section to constructing a $3$-tardy set $A$ not computed by any $2$-tardy set.  Hence, $A$ is non-codeable.

\begin{theorem}\label{T:3tardy}
  There is a $3$-tardy set $A$ such that for all $2$-tardy sets $B$,
  $A \nleq_T B$.
\end{theorem}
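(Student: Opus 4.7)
The plan is to construct $A$ by a priority argument meeting positive requirements that make $A$ be $3$-tardy and negative requirements $N_{i,j}$: if $\Phi_j^{W_i} = A$ then $W_i$ is not $2$-tardy. The $3$-tardiness will be built in uniformly: alongside the construction, for each computable nondecreasing $q$ I would describe a $3$-c.e.\ approximation of $\overline{A}$ using the three phases afforded by a $3$-c.e.\ presentation (tentatively out of $A$, tentatively committed to enter, and finally settled) and verify the tardiness inequality for $q$ directly from the pacing rule used to enumerate $A$.

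For a single $N_{i,j}$, I fix a computable nondecreasing $p_{i,j}$ (chosen in tandem with the construction) and aim to show: for every $2$-c.e.\ index $e$ with $X^2_e = \overline{W_i}$ there exist $y$ and $s$ with $y \in X^2_{e,s}$ and $y \in W_{i,p_{i,j}(s)}$. To this end, I would deploy one substrategy for each candidate index $e_k$, using a fresh witness $x_k$. The $k$-th substrategy waits for a stage $s$ at which $\Phi_{j,s}^{W_{i,s}}(x_k)\!\downarrow = 0$ with some use $u$ and at which $X^2_{e_k,s}\cap [0,u] = \overline{W_{i,s}}\cap [0,u]$, and then enumerates $x_k$ into $A$. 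Assuming $\Phi_j^{W_i} = A$, the reduction must recover, forcing some $y \leq u$ into $W_i$ at a later stage; the agreement condition already guarantees $y \in X^2_{e_k,s}$, so a violation of $2$-tardiness for $e_k$ follows as soon as $y$ enters $W_i$ by stage $p_{i,j}(s)$.

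The main obstacle is that $W_i$'s response time is not a priori computably bounded in $s$, so $p_{i,j}$ cannot simply be chosen in advance to dominate it. The remedy I would pursue is to run infinitely many witnesses per candidate index $e_k$ and to define $p_{i,j}$ in lockstep with the construction's pace, taking $p_{i,j}(s)$ large enough to encompass several additional iterations of the substrategy launched by stage $s$. The intended payoff is that if every witness for $e_k$ fails to produce a timely $W_i$-response, the accumulated behavior of $W_i$ and of $X^2_{e_k}$ on the uses involved produces a systematic discrepancy between $X^2_{e_k}$'s approximation and $\overline{W_{i}}$'s approximation, contradicting $X^2_{e_k}=\overline{W_i}$. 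Making this counting argument go through simultaneously for every $(i,j,k)$ is the crux of the proof.

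Finally, integrating the positive and negative sides is delicate: the $P_q$-type constraints require $A$'s enumeration to be slow (many stages of $q$'s clock must elapse between consecutive events), while the $N_{i,j}$-substrategies need $A$ to enumerate rapidly enough that $W_i$'s response lies under $p_{i,j}$. The asymmetry between $3$-tardy (for $A$) and $2$-tardy (for $W_i$)---one extra allowed alternation---is precisely the combinatorial room the construction exploits. I would implement the whole argument on a tree of strategies, with each $N_{i,j}$-node negotiating its enumeration times against the currently relevant $P_q$-constraints; the overall complexity should fall around $\mathbf{0}'''$, in line with the related constructions of Harrington and Soare.
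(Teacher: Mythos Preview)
Your framework is correct---the tree of strategies, the $3$-tardiness requirements building $X^3_e$'s, and the shape of the positive requirements all match the paper. But your proposed remedy for the timing obstacle does not work, and the actual fix is a different idea that you have not found.

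The obstacle you name is exactly right: once you target a witness $x$ for $A$, the $3$-tardiness delays force a long wait before $x$ actually enters $A$, and by then $X^2_{e_k}$ may already have removed from itself the element $y$ that will later enter $W_i$. Your remedy---infinitely many witnesses per $e_k$ plus a ``counting argument''---fails on two counts. First, actually enumerating infinitely many witnesses into $A$ on behalf of a single $(i,j,e_k)$ destroys the finite-injury structure the $3$-tardy side needs (the paper's Assumption $\mathcal{A}$). Second, and more fundamentally, there is no discrepancy to count: $X^2_{e_k}$ can perfectly well anticipate each $y_m$ in turn, removing it early and still satisfying $X^2_{e_k}=\overline{W_i}$ exactly; and defining $p_{i,j}$ ``in lockstep with the construction'' cannot bound how long $W_i$ takes to respond, since $W_i$'s enumeration is not under your control.

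The paper's solution is a \emph{two-level} diagonalization that invokes the hypothesis ``$W_i$ is $2$-tardy'' twice. One first defines $p_\alpha$ essentially as you do. If $W_i$ is $2$-tardy, some $X^2_e=\overline{W_i}$ beats $p_\alpha$; for that fixed $e$ one introduces a second node $\beta$ with its own delay $p_\beta$ (the next $\beta$-expansionary stage, measuring agreement of $X^2_e$ with $\overline{W_i}$), and for every further index $i'$ with $X^2_{i'}=X^2_e$ a witness $x$ is chosen, one waits for simultaneous agreement of $X^2_{i'}$, $X^2_e$, and $\overline{W_i}$ below the use, and targets $x$ for $A$. When $x$ finally drops below $\alpha$ at stage $t'$, one checks whether $X^2_{e,t'}$ still agrees with $\overline{W_{i,t'}}$ there: if yes, let $x$ enter $A$ and win against $e$ via $p_\alpha$; if no, \emph{withdraw} $x$ from the machine---this is precisely the third move that a $3$-c.e.\ presentation of $\overline{A}$ allows and a $2$-c.e.\ one does not. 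In the withdrawal case either some $y$ below the use has already entered $W_i$ (a $p_\beta$-win against $i'$, since $y\in X^2_{i',s}$), or some $y$ left $X^2_e$; being $2$-c.e., that $y$ can never return to $X^2_e$, so it must enter $W_i$ by the next $\beta$-expansionary stage $p_\beta(s)$---again a $p_\beta$-win against $i'$. Now apply $2$-tardiness of $W_i$ a second time, to $p_\beta$, to obtain some $i'$ with $X^2_{i'}=\overline{W_i}=X^2_e$ beating $p_\beta$; this contradicts what was just established. The nested pair of delay functions and the explicit withdrawal step are the engine your sketch is missing; the vague appeal to ``one extra alternation'' is pointing at the right resource but not using it.
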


\begin{proof}
  We will construct $A$. Our construction style will be a pinball
  machine laid out on top of a tree.  Here our tree will be $3^{<
    \omega}$. Since balls move downward (gravity) in this case we want
  to think of our tree as growing upward. As always, we are most
  concerned about the action of the pinball machine along the true
  path. We will have an approximation $f_s$ of the true path $f$, such
  that $f = \liminf_s f_s$.

  The approximation to the true path, $f_s$, will help determine the
  movement of the balls (integers) on the pinball machine.  Balls will
  be placed on the pinball machine by a node $\alpha \subset f_s$ at
  stage $s$ only when we wish to put them into $A$.  At stage $s$, all
  balls $x$ on the machine will be located at some node $\curnode{x}{s}$.
  If $\curnode{x}{s} =\lambda$ ($\lambda$ is the empty node), we put $x$
  into $A_{s+1}$ and remove $x$ from the machine at stage $s+1$.  So
  when a ball $x$ is on the machine our apparent goal is to move $x$
  downward and into $A$. At some point later, we will \emph{sometimes}
  change our mind and remove balls from the machine, preventing them from going
  into $A$. If $f_{s+1} <_{L} \curnode{x}{s}$ we will also remove $x$
  from the machine at stage $s+1$ and never use it again.  At stage
  $s+1$, we are free to place any ball $x\le s+1$ that has never been used on the machine.  However, we must ensure that for all
  $s$, if $\curnode{x}{s}\converge$ then, for all $t$ such that $x \leq t
  \leq s$, $\curnode{x}{s} \leq_{L} f_t$.  The action to ensure this goes
  on at every stage in the background.

  Our next goal is to make $A$ a $3$-tardy set.  This means that balls must
  enter $A$ very slowly.  We have to meet the following requirements:
  \begin{equation*}\tag*{$\mathcal{N}_e$:}
    \label{eq:3}
     \varphi_e \text{ total} \implies \exists[X^3_e]\left(X^3_e = \overline{A}  \land \forall[x]\forall[s]\left[ x \in X^3_{e,s} \implies x \not\in A_{\varphi_e(s)} \right]   \right) 
  \end{equation*}
  In general, the way to meet $\mathcal{N}_e$ is to ensure that for all balls \( x \) there is a stage $s_1$ at which we put $x$ into $A_{s_1}$ or
  $X^3_{e_1,s_1}$.  Now if a ball $x$ in $X^3_{e_1,s_1}$ wants to enter
  $A$ at stage $s_2 > s_1$ we must put $x$ into $X^3_{e_2,s_2}$.  Then
  we wait until a stage $s_3$ such that
  $\varphi_{e,s_3}(s_2)\converge$.  If such a stage $s_3$ exists then
  we \emph{must} eventually put $x$ into $A$ or $X^3_{e_3}$.  If a ball $x$ is in $X^3_{e_2,s_2}$ and we
  remove it from the machine at stage $s_4$, we will put $x$ into
  $X^3_{e_3}$ at stage $s_4$.  If $\varphi_{e,s_3}(s_2)\diverge$ then
  $\varphi_e$ is not total and the requirement is satisfied.

  In the tree construction, we will use node $\gamma$ to meet
  $\mathcal{N}_e$.  We will label the $3$-c.e.\ set constructed at
  $\gamma$, as $X^3_\gamma$ rather than $X^3_e$.  At stages $s$, where
  $\gamma \subseteq f_s$ we will put all balls $x \not\in A_s$ such
  that $|\gamma|\leq x \leq s$ into $X^3_{\gamma_1,s}$.  If $\gamma
  \subset f$ then almost all balls not in $A$ are in $X^3_{\gamma_1}$.
  At the first stage where $\curnode{x}{s} = \gamma$, we will put $x$ into
  $X^3_{\gamma_2}$.  If we remove $x$ from the machine before entering
  $A$, we will put $x$ into $X^3_{\gamma_3}$. Should \( f_s \) ever be to the left of \( \gamma \), then some ball \( x \) with \( \curnode{x}{s} \subseteq f_s \) already in \( X^3_{\gamma_1,s} \) might enter \( A \) without proper delay.  However, since only finitely many such stages may occur along the true path whenever \( f_s \) moves to the left of \( \gamma \), we may reset our construction of \( X^{3}_\gamma \) (equivalently, we imagine that the tree guesses at how many elements each positive requirement places into \( A \)).

  Given a stage $s+1$ such that $\gamma \subseteq f_{s+1}$, let $t
  \leq s$ be the greatest stage such that $\gamma \subseteq f_t$ (if $t$
  does not exist let $t=0$.) Define $$l_\gamma(s) = \max x [(\forall z
  < x)\varphi_{e,s}(z)\converge].$$ 
\noindent
The function $l_\gamma(s)$ measures the length of convergence of $\varphi_e$ at stage $s$.    
  If $l_\gamma(s) > l_\gamma(t)$
  and, for all $x \in X^3_{\gamma_2,s}$, if $x \in
  X^3_{\gamma_2,\text{at } s'}$ then $l_\gamma(s) > s'$,  then we say that $s+1$
  is {\em $\gamma$-expansionary}.    In other words,  stage $s+1$ is $\gamma$-expansionary if the length of convergence of $\varphi_e$ has increased and the proper amount of delay for all $x\in X^3_{\gamma_s, s}$ has been determined.  At $\gamma$-expansionary stages $s+1$, we
  move all balls $x$ such that $\curnode{x}{s} =\gamma$ downward so that
  $\curnode{x}{s+1} = \beta$, where $\beta \cat 0 \subseteq \gamma$ and
  $\beta$ is the greatest such subnode of $\gamma$ assigned to some
  $\mathcal{N}_{e'}$  (only nodes working on the requirements
  $\mathcal{N}_{e'}$ stop balls from moving downwards) or if no such
  $\beta$ exists let $\beta = \lambda$. If $s+1$ is
  $\gamma$-expansionary, we let $\gamma \cat 0 \subset
  f_{s+1}$. Otherwise, we let $\gamma \cat 1 \subset f_s$.  If we have
  moved any balls downwards or $|\gamma| = s$, we end this stage.
  Otherwise, we consider the action of $\gamma \cat 0$ or $\gamma \cat
  1$.

  $\mathcal{N}_e$ is a $\Pi^0_2$ requirement.  How $\mathcal{N}_e$ is
  met depends on the answer to the $\Pi^0_2$ question is $\varphi_e$
  total. Suppose that $\gamma \subset f$. Define $f$ such that
  $\gamma \cat 0 \subset f$ if $\varphi_e$ is total and $\gamma \cat 1
  \subset f$ if not.  If $\gamma \subset f$ then it not hard to that
  $\liminf f_s \restriction (|\gamma|+1) = \gamma \cat 0$ iff
  $\varphi_e$ is total.

	Note that we have made the simplifying assumption that if we enumerate \( x \) into \( X^{3}_{\gamma_2} \) at stage \( s \) then \( x \in X^{3}_{\gamma_2,s}  \).  While we may simply choose an enumeration of \( X^{3}_{\gamma_2} \) to make this true, we must satisfy \( \mathcal{N}_e \) with respect to the canonical enumeration of elements into c.e.\ sets.  However, using the recursion theorem, we may safely assume that each node is actually in possession of an index for every c.e.\ set built at that node and then, when necessary, we can simply wait until every element enumerated into some \( X^{3}_{\gamma_2} \) appears in it in the canonical enumeration.  Since such modifications are straightforward but tedious, we will refrain from further mention of them.
  
  We assume the nodes that place balls on the machine obey the
  following rules and assumptions.  A node $\alpha \supset \gamma$ can
  only place a ball $x$ on the machine at stage $t$ if
  $x\in X^3_{\gamma_1,t}$.  Moreover, while $\alpha$ might place a ball on
  the machine at stage $s$, $\alpha$ can only place these balls at
  nodes working on the requirement $\mathcal{N}_{e'}$ for some
  $e'$. While we will not restrict how many balls $\alpha$ can place
  on the machine, we assume
  \begin{equation*}
    \tag*{$\mathcal{A}$} \label{eq:A}
    \text{Only finitely many  balls that $\alpha$ places on the machine enter } A.
  \end{equation*}

  Assume that $\gamma \cat 0 \subset f$.  Let $s'$ be such that for
  all $s \geq s'$, $\gamma \subseteq f_{s'}$, $f_{s} \not <_L \gamma$
  and no $\alpha \subset \gamma$ places any more balls into $A$ after
  stage $s'$. Under our extra Assumption~$\mathcal{A}$, we know such a
  stage exists. Assume we are dealing with stages $s \geq s'$.  It is
  not difficult to verify by induction on the length of $\gamma$ that
  if $\gamma \cat 0 \subseteq f_s$, $\curnode{x}{s-1} = \gamma$, and
  $t>s$ is the next stage such that $\gamma \cat 0 \subseteq f_t$ then
  either $x \in A_t$ or $x \in X^3_{\gamma_3,t}$ and for all $y > s'$,
  if $\curnode{y}{t} \subset \gamma$ then $\curnode{y}{t} \cat 1 \subset
  \gamma$.  It is easy to determine which balls enter $A$ between such
  stages.  We assumed that all balls placed on the machine by nodes  $\alpha\subset\gamma$ that enter $A$ have already entered by stage $s'$.  Therefore, the balls that enter $A$ between $s'$ and $t$ come from nodes to the right of $\gamma$.  Since these nodes were reset at stage $s'$, these balls  all have to be larger than $s'$ (otherwise we have that $\curnode{y}{t'} \leq_L \gamma$ for some stage
  $s' < t' < t$) and get into $A$ by stage $t$.  Hence, with the above
  movement of balls and Assumption~$\mathcal{A}$ we have that
  $X^3_\gamma =^* \overline{A}$ and we have met $\mathcal{N}_e$.

  Our next goal is to make $A$ so that it is not computed by any
  $2$-tardy.  We must meet the requirements:
  \begin{equation*}
    \tag*{$\mathcal{P}_e:$}
    \text{If $\Phi_{e_1}(W_{e_2}) = A$, 
      then $W_{e_2}$ is not $2$-tardy.}
  \end{equation*}
  We will assign a parent node $\alpha$ to $\mathcal{P}_e$.  Node $\alpha$ will be
  working on the requirement:
  \begin{equation*}
    \tag*{$\mathcal{P}_\alpha:$}
    \text{If $\Phi_{\alpha}(W_{\alpha}) = A$ 
      then $W_{\alpha}$ is not $2$-tardy.}
  \end{equation*}
Determining whether $\Phi_{\alpha}(W_{\alpha}) = A$ is $\Pi^0_2$. So
  $\alpha$ will have two outcomes $1$ and $2$: outcome $1$ if
  $\Phi_{\alpha}(W_{\alpha}) = A$ and outcome $2$ otherwise.   We will later use
 outcome $0$ to denote a $\Sigma^0_1$ win.  Like above, determining whether
  $\Phi_{\alpha}(W_{\alpha}) = A$ can be measured by asking if there
  are infinitely many expansionary stages where length here measures length of agreement between $\Phi_{\alpha}(W_{\alpha})$ and $A$.

  Assume $\alpha \subseteq f_s$. Let $t \leq s$ be the greatest  stage
  such that $\alpha \subseteq f_t$ (if $t$ does not exist let $t=0$.)
  Define $l_\alpha(s) = \max x [(\forall z <
  x)\Phi^{W_{\alpha,s}}_{\alpha,s}(z)\converge = A_s(z)]$. 
  \noindent
  We say
  that $s+1$ is {\em $\gamma$-expansionary} 
   if
   \begin{enumerate}
   \item      $l_\alpha(s) > l_\alpha(t)$ and, 
   \item for all $\beta \supseteq \alpha$,
  if $x_\beta$ is defined (these will be witnesses to help meet
  requirement $\mathcal{P}$) then $l_\alpha(s) > x_\beta$.
  \end{enumerate}
    If $s+1$ is $\alpha$-expansionary,  
  let $\alpha \cat 1 \subseteq
  f_{s+1}$.  Otherwise, $\alpha \cat 2 \subseteq f_{s+1}$.  If there
  are only finitely many expansionary stages, we need not take any
  action to meet $\mathcal{P}_\alpha$. We only need to take action
  if it appears there are infinitely many expansionary stages (the
  $\Pi^0_2$ outcome).

  We can define the function $p_\alpha(t) = s$ iff $s > t$ is the
  least stage such that $\alpha \cat 1 \subset f_s$. If $\alpha \cat 1
  \subset f$ then $p_\alpha$ is computable. From our work above, we
  know if $\alpha(x,t)\cat 0 \subset \alpha$ then at stage $s = p_\alpha(t)$
  either $x$ is in $A$ or removed from the machine.  This is the
  function we will try to use to witness $W_\alpha$ is not $2$-tardy.

  As a first approximation to showing $W_\alpha$ is not $2$-tardy, we
  might try the following. Above the node $\alpha \cat 1$, we will have
  nodes $\beta$ working on the requirements:
  \begin{equation*}\tag*{$\mathcal{P}_{\alpha,{e}}$:}
    \label{eq:4}
    \begin{split}
      \text{If $X^2_e = \overline{W}_\alpha$ then } &   \\
      \text{there exists $y$ and $s$ such} & \text{ that $ y \in
        X^2_{e,s}$ and } y \in W_{\alpha,p_\alpha(s)}.
    \end{split}
  \end{equation*}
  The idea to meet $\mathcal{P}_{\alpha,{e}}$ is the following:
  At a stage $s$ where $\beta \subset f_s$, choose some large ball
  $x_\beta$. Keep $x_\beta$ out of $A$ and off the machine. 
Let
  $u_{\alpha,s}(x)$ be the use of $\Phi^{W_\alpha}_{\alpha, s}(x)$. 
   Wait for
  a stage $s$ where $\alpha\cat 1 \subseteq f_s$, $X^2_{e,s} \restriction
  u_{\alpha,s}(x_\beta) = \overline{W}_{\alpha,s} \restriction
  u_{\alpha,s}(x_\beta)$ and $\beta \subset f_s$.  If
  such a stage can be found, we want to add $x_\beta$ to $A$ quickly,
  before stage $t = p_\alpha(s)$. Since $\Phi_\alpha^{W_\alpha} = A$, some
  ball $y < u_{\alpha,s}(x_\beta)$ must enter $W_\alpha$ by stage
  $t$. That $y$ must be in $X^2_{e,s}$.

  The problem is adding these balls into $A$ quickly.  If we could
  place the balls $x_\beta$ that we want to enter $A$ into the
  machine at some node $\gamma \subseteq \alpha$ at stage $s$, then by
  our work above we would not have a problem. Since there might be
  infinitely many $\mathcal{P}_{\alpha,{e}}$ that want to
  place balls into $A$, this would violate our extra
  Assumption~\hyperref[eq:A]{$\mathcal{A}$}.  We might try to remove
  this extra assumption. But even so, the set of balls that all
  requirements $\mathcal{P}_{\alpha,{e}}$ might want to add to
  $A$ is not computable. So, we have no reasonable way to manage these
  balls if we allow them all to enter the machine at $\alpha$ or
  below.

  Hence, for each ${e}$ we must assign a different  node $\beta \supseteq \alpha \cat 1$ to
  $\mathcal{P}_{\alpha,{e}}$.  When $\beta$ wants to add
  $x_\beta$ to $A$, the node $\beta$ places $x_\beta$ at the largest
  substring $\gamma=\nu\cat 0$ of $\beta$ where $\nu$ is assigned to some $\mathcal{N}_{e'}$.
  Let stage $t'$ be the first stage that $x_\beta$ goes below $\alpha$
  in the machine. At such a stage we have that $\alpha\cat 1 \subset
  f_{t'}$. If  $X^2_{e,t'} \restriction
  u_{\alpha,t'}(x_\beta) = \overline{W}_{\alpha,t'} \restriction
  u_{\alpha,t'}(x_\beta)$, we let $x_\beta$ continue
  downwards into $A$ for a win (on the above $y$ and $t'$) on
  $\mathcal{P}_{\alpha,{e}}$ as described above.  
 But this may no longer be the
  case.  We have no reason to believe that $t'$ is expansionary for
  $X^2_{e} = \overline{W}_{\alpha}$.  It may be the case that at stage
  $t'$, $X^2_{e,t'}$ is already correctly predicting which balls $y$
  will enter $W_\alpha$.

  Hence, we must modify our requirements to
  \begin{equation*}\tag*{$\mathcal{P}_{\alpha,e}$:}
    \label{eq:5}
    \text{If } X^2_e = \overline{W}_\alpha \text{ and } \neg [(\exists y)
    (\exists s) [ y \in X^2_{e,s} \wedge y \in
    W_{\alpha,p_\alpha(s)}]] 
  \end{equation*}
  \begin{center}
then for all $i$
  \end{center}
  \begin{equation*}\tag*{$\mathcal{P}_{\alpha,e,i}$:}
    \label{eq:6}
    \text{If  } X^2_i = X^2_e \text{ then }
    (\exists y) (\exists s) [ y \in X^2_{i,s} \wedge y \in
    W_{\alpha,p_\beta(s)}]].
  \end{equation*}

  As before some $\beta \supset \alpha \cat 1$ will be assigned to
  $\mathcal{P}_{\alpha,e}$.  The node $\beta$ will have three possible
  outcomes.  The first, $\beta \cat 0$, is in the case we have a
  $\Sigma^0_1$ win for   $\mathcal{P}_{\alpha,e}$, i.e., a ball $y$ and stage $s$ where $y \in
  X^2_{e,s}$ and $y \in W_{\alpha,p_\alpha(s)}$. 
  The second outcome $\beta \cat 1$ holds if
  there is not a $\Sigma^0_1$ win and $X^2_e = \overline{W}_\alpha$.
  The  $\beta \cat 2$ outcome holds otherwise.  As above, we will measure whether $X^2_e
  = \overline{W}_\alpha$ by expansionary stages.

  Assume $\beta \subseteq f_{s+1}$. Let $t \leq s$ be the greatest stage
  such that $\beta\, \cat 1 \subseteq f_t$ (if $t$ does not exist, let $t=0$).
  Define $l_\beta(s) = \max x [(\forall z < x)[X^2_{e,s}(z) =
  \overline{W}_{\alpha,s}(z)]$. 
    We say that $s+1$ is
  {\em $\beta$-expansionary} if 
  \begin{enumerate}
\item   $l_\beta(s) > l_\beta(t)$ and 
\item for
  all $\delta \supseteq \beta$, if $x_\delta$ (a ball to satisfy   $\mathcal{P}_{\alpha,e, i}$) is defined then
  \mbox{$l_\beta(s) >  u_{e,s}(x_\delta)$.}
  \end{enumerate}
  If stage $s+1$ is $\gamma$-expansionary   and we have not seen a $\Sigma^0_1$ win for $\mathcal{P}_{\alpha,e}$, then
  $\beta \cat 1 \subseteq f_{s+1}$. If we have seen the $\Sigma^0_1$
  win, then $\beta \cat 0 \subseteq f_{s+1}$.  Otherwise, $\beta \cat 2
  \subseteq f_{s+1}$.

  If there are only finitely many expansionary stages or we see the
  $\Sigma^0_1$ win, 
  $\mathcal{P}_{\alpha,e}$ is automatically satisfied.  Assume that this is not the case. Hence,
  as above, we are in $\Pi^0_2$ outcome.
 In this case, we must meet $\mathcal{P}_{\alpha,e,i}$, for all $i$.
  For each $i$ we will assign some node $\delta \supseteq \beta \cat
  1$ to $\mathcal{P}_{\alpha,e,i}$. The outcomes and approximations to
  the true path for $\delta$ are defined in similar fashion to what was
  done for $\beta$ and we will not repeat them.  The issue for
  $\delta$ is showing that $\delta$ does not have the $\Pi^0_2$
  outcome, $\delta \cat 1$.

  At a stage $s$ where $\delta \subset f_s$, choose a large unused ball
  $x_\delta$, which we hold out of $A$ and the machine. Wait
  for a stage $s$ where $\delta \cat 1 \subseteq f_s$.  If such a
  stage does not exist we have won this requirement.  If such a stage
  exists, then place $x_\delta$ into the machine at the largest
  substring $\gamma=\nu\cat 0$ of $\beta$ (note, not $\delta$) where $\nu$ is assigned to some
  $\mathcal{N}_{e'}$ and end this stage.

  Now, assuming $\beta \cat 1 \subset f$, there will be a later stage
  $t'$ where $x_\delta$ moves below $\alpha$ and $\alpha \cat 1
  \subseteq f_{t'}$. Otherwise, $\alpha \cat 2 \subset f$ and then the
  action of $\alpha, \beta$ and $\gamma$ are finitary and therefore
  Assumption~\hyperref[eq:A]{$\mathcal{A}$} holds.  If  $X^2_{e,t'} \restriction u_{e,t'}(x_\delta) =
  \overline{W}_{\alpha,t'} \restriction  u_{e,t'}(x_\delta)$,
  we let $x_\delta$ continue downwards into $A$ for a win
 on $\mathcal{P}_{\alpha,{e}}$ as before.  If this
  happens for any $i$, it will provide us with a $\Sigma^0_1$ win on
  $\mathcal{P}_{\alpha,{e}}$ and all the balls $x_\delta$ will be
  removed from the tree since they are to the right of the true path. Therefore, the action of $\beta$ and those
  $\delta \supset \beta \cat 1$ will be finitary. Hence, in this case,
  Assumption~\hyperref[eq:A]{$\mathcal{A}$} holds.

  Assume that $X^2_{e,t'} \restriction  u_{e,t'}(x_\delta) \neq
  \overline{W}_{\alpha,t'} \restriction  u_{e,t'}(x_\delta)$.
  Here, we will remove $x_\delta$ from the machine.  We put $x_\delta$
  into $X^3_{\gamma,3}$ at stage $t'$, for all $\gamma$ such that
  $x_\delta$ is in $X^3_{\gamma,2}$ at stage $t'$.  We now have to
  argue that this provides us with a win for
  $\mathcal{P}_{\alpha,e,i}$.

  \begin{remark}\label{faster}  
    Assume that $\beta\, \cat 1 \subset f$.  Since $p_\alpha(s) \geq s$,
    if it is ever the case that $X^2_{e,s}$ is a proper superset of
    $\overline{W}_{\alpha,s}$ then we know some ball $y$ in $X^2_{e,s}
    \cap {W}_{\alpha,s}$ must later leave $X^2_{e,s}$.  Such a ball
    and a stage will provide us with a $\Sigma^0_1$ win for
    $\beta$. So, we can assume that $\overline{X}^2_e$ is a faster
    enumeration to $W_\alpha$ than the standard enumeration.
  \end{remark}

  We wait for the next stage $t = p_\beta(s)$ such that $\beta \cat 1
  \subseteq f_t$. If such a stage does not exist, then $\beta \cat 2
  \subset f$, the action of $\beta$ and all the related $\delta$ are
  finitary, and therefore, Assumption~\hyperref[eq:A]{$\mathcal{A}$}
  holds.

  At this point we have the following
  \begin{gather*}
    X^2_{i,s} \restriction  u_{e,s}(x_\delta) = X^2_{e,s}
    \restriction  u_{e,s}(x_\delta) = \overline{W}_{\alpha,s}
    \restriction
     u_{e,s}(x_\delta) \\
    X^2_{e,t'} \restriction  u_{e,t'}(x_ \delta) \neq
    \overline{W}_{\alpha,t'} \restriction  u_{e,t'}(x_ \delta)\\
    X^2_{e,t} \restriction  u_{e,t}(x_ \delta) =
    \overline{W}_{\alpha,t} \restriction  u_{e,t}(x_ \delta)
  \end{gather*}

  If $\overline{W}_{\alpha,s} \restriction  u_{e,s}(x_ \delta)
  \neq \overline{W}_{\alpha,t'} \restriction  u_{e,t'}(x_ \delta)$,
  then some $y$ entered $W_{\alpha, t'}$ after stage $s$.  
  Then,  we have a $\Sigma^0_1$ win for
  $\delta$ since $t' < p_\beta(s)$ and $y\in X_{i, s}^2$.  So, assume that $\overline{W}_{\alpha,s} \restriction
   u_{e,s}(x_ \delta) = \overline{W}_{\alpha,t'} \restriction
   u_{e,t'}(x_ \delta)$.  By the fact that $\beta \cat 1 \subset f$ and
  Remark~\ref{faster}, it must be the case that
  \begin{equation*}
    X^2_{e,t'} \restriction  u_{e,t'}(x_ \delta) \subsetneq
    X^2_{e,s} \restriction  u_{e,s}(x_ \delta).
  \end{equation*}
  Hence, some ball $y <  u_{e,s}(x_ \delta)$ must leave
  $X^2_{e,s}$.  Since $X^2_{e}$ is 2-c.e.\ that ball $y$ can never
  return.  Hence, since $\beta \cat 1 \subset f$, that ball $y$ must
  enter $W_\alpha$ and, moreover, it must enter before stage $t=p_\beta(s)$.
  Therefore, we have a $\Sigma^0_1$ win for $ \delta$.

  Assume that $\beta \cat 1 \subset f$.  The infinitely many $\delta$
  above $\beta \cat 1$ might place infinitely many balls onto the
  machine.  Moreover, we can arrange things such that the set of these
  balls is not a c.e.\ set.  But at  most one of these balls will
  enter $A$ and Assumption~\hyperref[eq:A]{$\mathcal{A}$} holds.

  All that remains at this point is to assign the nodes on the tree
  such that all the requirements are met. But this can be done in a
  straightforward fashion.
\end{proof}

\section{Definability and \texorpdfstring{$n$}{n}-tardies}\label{S:Q_n}

We define a property $Q_n$ such that $Q_n$ is nontrivial and if $Q_n(A)$ holds, then $A$ is $n$-tardy.  In the next section, we define a nontrivial  property $\hat{Q}_n$ using $Q_n$ such that if $\hat{Q}_n(A)$ holds, then $A$ is $n$-tardy and $\neg\hat{Q}_i(A)$ holds for all $i<n$.


\begin{definition}
(i) Let $A\subset_\infty C$ denote that $A\subseteq C$ and $C-A$ is infinite.\\
\noindent
(ii) A subset $A$ is a {\em major subset} of $C$, denoted $A\subset_m C$ if $A\subset_\infty C$ and for all $e$,
$$\bar{C}\subseteq W_e \implies \bar{A} \subseteq^* W_e.$$
(If $A\subset_m C$, then $A$ and $C$ are noncomputable.)\\
\noindent
(iii) $A\sqsubset C$ if there exists a $B$ such that $A\sqcup B= C$, i.e., $A\cup B=C$ and $A\cap B=\emptyset$.
\end{definition}

\subsection{\texorpdfstring{$Q_{n}$}{Qn}}  

We begin by defining $Q_{n}$ for \( n \geq 2 \) and proving that if $Q_{n}(A)$ holds, then $A$ is $n$-tardy.
The definition of $Q_{n}$ generalizes Definition~3.2 of $Q_2$ given in \cite{codable-sets-and-orbits-of-computably-enumerable-sets}.
We define $Q_n$ separately for for $n$ even and odd.  

\begin{definition} \hfil\\
	\hfil\\
	\begin{align}\label{E:Q_2n}\tag*{$Q_{2n}(A)$}
	 							&(\exists C\supset_m A) &\\
								&(\forall B_1\subseteq C)(\forall B_2\subseteq B_1)\ldots(\forall B_n\subseteq B_{n-1}) \notag & \\
							  &(\exists D_1\subseteq C)(\exists D_2\subseteq D_1)\ldots(\exists D_n\subseteq D_{n-1}) \notag &\\
								&(\forall S\sqsubset C)(\exists T_1\supseteq \bar{C})(\exists T_2\subseteq T_1) \ldots (\exists T_{n}\subseteq T_{n-1}) \notag &\\ 
	 & \quad \label{E:Q_2nif}\tag*{$Q_{2n}(A):\text{if}$}\left[\begin{aligned}
		B_1\cap(S-A) &= D_1\cap(S-A) \\
		B_2\cap(S-A) &= D_2\cap(S-A) \\
		 &\ldots  \\
		B_n\cap(S-A) &= D_n\cap(S-A) 
	\end{aligned} \right]  \\
	 			  & \notag \qquad \qquad \qquad \qquad \implies \\
		  	&	\quad \label{E:Q_2nthen}\tag*{$Q_{2n}(A):\text{then}$}  \left[\begin{alignedat}{3}
			 (A\cup T_2)&\cap(S\cap T_1) &&= & B_1 &\cap(S\cap T_1) \\
			 (A\cup T_3)&\cap(S\cap T_2) &&= & B_2 & \cap(S\cap T_2)\\
			 & &&\ldots \\
			 (A\cup T_n)&\cap(S\cap T_{n-1}) &&= & B_{n-1} &\cap(S\cap T_{n-1}) \\
			 A&\cap(S\cap T_n) &&= & B_n &\cap(S\cap T_n) 
			 \end{alignedat}\right] 
	\end{align}

\medskip

\begin{equation*}\label{E:Q_2n+1}\tag*{$Q_{2n+1}(A)$}
	(\exists Y \subseteq \bar{A})Q_{2n}(A \cup Y)
\end{equation*}

\end{definition}

\begin{theorem}\label{T:Q_2nimptardy} If $Q_{n}(A)$ holds, then $A$ is $n$-tardy.

\end{theorem}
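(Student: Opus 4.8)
The plan is to show directly that if $Q_n(A)$ holds then $A$ satisfies the $n$-tardy condition from Definition~\ref{D:verytardy}: given any nondecreasing computable $p(s)$, we must produce an $n$-c.e.\ presentation $X^n_e$ such that if $X^n_e = \overline{A}$ then no witnessed element enters $A$ within the time bound $p$. I would treat the even case $Q_{2n}$ first, since $Q_{2n+1}$ reduces to it: if $Q_{2n+1}(A)$ holds via $Y\subseteq\overline A$ with $Q_{2n}(A\cup Y)$, then $A\cup Y$ is $2n$-tardy, and one checks that adding a c.e.\ piece $Y$ to a set only costs one more level in the $n$-c.e.\ hierarchy, so $A = (A\cup Y) - Y$ is $(2n+1)$-tardy; I would isolate this as a small lemma.

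For the even case, fix the $C\supset_m A$ given by $Q_{2n}(A)$ and fix a computable $p$. The idea, following the $n=1$ (i.e.\ $Q_2$) argument of Harrington–Soare that this definition generalizes, is to use the quantifier alternation in $Q_{2n}$ as a game against a purported fast enumeration. Suppose toward a contradiction that $A$ is not witnessed as $2n$-tardy by any presentation; in particular, using majority/the robustness of the tardy notion, one extracts from the failure a uniformly computable object that we feed into the $\forall B_i$ slots: the sets $B_1\supseteq B_2\supseteq\cdots\supseteq B_n$ are built from the hypothetical $2n$-c.e.\ enumeration $X^{2n}_e$ and the stagewise behavior of $p$, so that $B_i\cap(S-A)$ records "the $i$-th alternation of membership, observed by a stage $p$-related to when it was seen in $X^{2n}_e$." The $Q_{2n}$ hypothesis then hands us $D_1\supseteq\cdots\supseteq D_n$ with $B_i\cap(S-A) = D_i\cap(S-A)$ for every split $S\sqsubset C$. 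Choosing $S$ to be the split of $C$ into $A$ together with (a computable copy inside $C$ of) the diagonalization set, the "if" clause is met, so the "then" clause gives sets $\overline C\subseteq T_1\supseteq\cdots\supseteq T_n$ with the chain of equations $(A\cup T_{k+1})\cap(S\cap T_k) = B_k\cap(S\cap T_k)$ and $A\cap(S\cap T_n) = B_n\cap(S\cap T_n)$. Reading these $n$ equations from the bottom up, they express $A\restriction (S\cap T_n)$ as an explicit Boolean combination $(B_n - T_n')\cup\cdots$ of the $B_k$'s and $T_k$'s — and since each $B_k$ is (by construction) an actual c.e.\ set with a known enumeration and the $T_k$ are c.e., this combination is a $2n$-c.e.\ set whose enumeration races ahead of $A$'s by exactly the $p$-delay we built in. That $2n$-c.e.\ set equals $\overline A$ on a cofinite piece (here major-ness of $A$ in $C$ is used: $\overline C\subseteq T_1$ and $\overline A\subseteq^* $ anything containing $\overline C$, so off a finite set $\overline A$ and $\overline C$ behave alike inside the relevant domain), contradicting the assumed non-$2n$-tardiness of $A$.

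The main obstacle, and the step I would spend the most care on, is the bookkeeping that matches the syntactic shape of the $Q_{2n}$ "then" clause — an alternating tower of $n$ equations involving $A\cup T_{k+1}$ on the left — to a genuine $2n$-c.e.\ presentation with the right enumeration speed. One has to be careful that the $D_i$'s and $T_i$'s produced by $Q_{2n}$ are only guaranteed to satisfy the equations \emph{in the limit}, whereas $n$-tardiness is about stagewise enumeration; bridging this requires running the whole argument at the level of approximations, using that $B_i$ was defined so that its stage-$s$ approximation is controlled by $X^{2n}_{e}$ at stage $p$-related to $s$, and then verifying that the Boolean combination dictated by the "then" equations, when approximated stagewise, still enumerates each element no later than $p$ of the stage $X^{2n}_e$ does. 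I would also need to handle the arbitrary split $S$: the point of the $\forall S\sqsubset C$ quantifier is exactly that we get to pick $S$ to encode the diagonalization, and checking the "if" hypothesis holds for that choice of $S$ is where the $D_i$'s earn their keep. Finally, a remark that $Q_n$ is nontrivial (i.e.\ satisfiable) is deferred to Theorem~\ref{T:Q_nsatisfied}, so it plays no role here.
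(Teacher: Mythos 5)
Your reduction of the odd case to the even case is correct and matches the paper's lemma for that step: add \( Y \) as one more \( n \)-c.e.\ layer. For the even case, your high-level strategy has the right shape --- commit to \( \vec{B} \) in a way that encodes the delay \( p \), let \( Q_{2n} \) hand you \( \vec{D} \) and \( \vec{T} \), pick \( S \) to isolate the diagonalization, and assemble a \( 2n \)-c.e.\ set out of the \( T_i \)'s and \( D_i \)'s. But there is a real gap at the center of the argument that you gesture at without bridging.

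The quantifier order in \( Q_{2n} \) is \( \forall \vec{B}\, \exists \vec{D}\, \forall S\, \exists \vec{T} \). As the \( \forall \)-player you must commit to \( \vec{B} \) before seeing \( \vec{D} \), and to \( S \) before seeing \( \vec{T} \). Yet you want \( B_i \) to ``record the \( i \)-th alternation of membership, observed by a stage \( p \)-related to when it was seen in \( X^{2n}_e \)'' --- i.e.\ you want the enumeration of each \( B_i \) to depend on the very \( 2n \)-c.e.\ set you plan to build out of the \( \exists \)-player's reply. That reply does not exist until after \( \vec{B} \) is fixed, so this is circular, and calling it an ``approximations vs.\ limits'' mismatch undersells it: running the argument stagewise does not dissolve a quantifier-order problem. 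The paper's proof breaks the circle by making \( \vec{B} \) (together with a partition of \( C \) into pieces \( S_\alpha \), indexed by tuples \( \alpha \) of c.e.\ indices) react simultaneously to \emph{every possible guess} at the indices of \( \vec{D} \) and \( \vec{T} \): on \( S_\alpha \), the construction plays against the guessed \( \vec{D}_\alpha,\vec{T}_\alpha \), and a \( \Pi^0_2 \) guessing procedure then locates the least \( \alpha \) where the guess succeeds, after which the \( 2n \)-c.e.\ set is assembled only on that piece (with slowdown-lemma bookkeeping on arrays \( U^\alpha_k \) and on the even components). Your contradiction framing also does not patch this: you ``extract from the failure a uniformly computable object,'' but non-\( 2n \)-tardiness is a statement quantifying over all candidate presentations, not something witnessed by a single computable object, and moreover the hypothetical \( X^{2n}_e \) you then want to read the \( p \)-delay off of is exactly what the contradiction assumption denies you. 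The paper's argument is a direct construction, not a reductio, precisely because the witness has to be built and the guessing machinery is how it gets built. Without that mechanism the proposal does not go through.
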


We break this proof into two lemmas, one handling the case where \( n \) is even and the other handling the case where \( n \) is odd.

\begin{lemma}\label{lem:Q_2n1imptardy}
	If \( Q_{2n}(A) \) implies \( A \) is \( 2n \)-tardy for any c.e.\ set $A$, then \( Q_{2n+1}(A) \) implies \( A \) is \( 2n+1 \)-tardy for any c.e.\ set $A$.
\end{lemma}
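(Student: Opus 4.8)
The plan is to reduce $(2n{+}1)$-tardiness of $A$ to $2n$-tardiness of $A \cup Y$, where $Y \subseteq \bar A$ is the set witnessing $Q_{2n+1}(A)$. Recall that $Q_{2n+1}(A)$ asserts $(\exists Y \subseteq \bar A)\,Q_{2n}(A \cup Y)$, so by the hypothesis of the lemma we already know $A \cup Y$ is $2n$-tardy. We must produce, from any witness to the failure of $2n$-tardiness of $A \cup Y$ with respect to a fixed computable $p(s)$, a witness to the failure of $(2n{+}1)$-tardiness of $A$ with respect to the same $p$.

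First I would fix an enumeration $\{A_s\}$ of $A$ and note that, since $Y$ is c.e., we get an enumeration $\{(A\cup Y)_s\}$ of $A \cup Y$ with $(A \cup Y)_s = A_s \cup Y_s$ for a suitable simultaneous enumeration; because very tardiness is robust under choice of enumeration (Definition~\ref{D:verytardy}), it suffices to work with this one. Given a nondecreasing computable $p(s)$, apply $2n$-tardiness of $A \cup Y$ to obtain $n', e$ with $n' \le 2n$ such that $X^{n'}_e = \overline{A \cup Y}$ implies $(\forall x)(\forall s)[x \in X^{n'}_{e,s} \implies x \notin (A\cup Y)_{p(s)}]$. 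The key observation is that $\overline{A \cup Y} = \bar A \cap \bar Y$, and since $Y \subseteq \bar A$ is c.e., $\bar A = \overline{A \cup Y} \sqcup Y$; thus from an $n'$-c.e.\ presentation of $\overline{A\cup Y}$ together with the c.e.\ set $Y$ we can build a $(n'{+}1)$-c.e.\ presentation of $\bar A$ — concretely, take the presentation of $\overline{A \cup Y}$ and adjoin $Y$ as a new top c.e.\ set, so that $\bar A = (\text{that }n'\text{-c.e.\ set}) \cup Y$, which is $(n'{+}1)$-c.e.\ and hence $(2n{+}1)$-c.e.\ by padding. Call its index $\tilde e$, so $X^{2n+1}_{\tilde e} = \bar A$ whenever $X^{n'}_e = \overline{A\cup Y}$, with the stagewise approximation satisfying $X^{2n+1}_{\tilde e, s} = X^{n'}_{e,s} \cup Y_s$ up to the usual bookkeeping.

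Now I would verify the delay condition. Suppose $x \in X^{2n+1}_{\tilde e, s}$. Either $x \in Y_s$, in which case $x \in Y \subseteq \bar A$, so trivially $x \notin A_{p(s)}$; or $x \in X^{n'}_{e,s}$, in which case by the choice of $e$ we have $x \notin (A \cup Y)_{p(s)} \supseteq A_{p(s)}$, so again $x \notin A_{p(s)}$. Hence $(2n{+}1)$-tardiness of $A$ holds via the single index $\tilde e$ (and the construction of $\tilde e$ from $e$ is uniform in $p$, which is all that "a single $n$ works for all $p$" requires). I expect the only real subtlety — the main obstacle, such as it is — to be the padding/bookkeeping step: matching the $n'$-c.e.\ presentation of $\overline{A\cup Y}$ (where $n'$ could be any value $\le 2n$, and of either parity) to a genuine $(2n{+}1)$-c.e.\ presentation of $\bar A$ of the prescribed arity, handling the parity conventions in the definition of $n$-c.e.\ sets and confirming the stagewise approximations line up so that the witness inequality $x \notin A_{p(s)}$ is literally the one demanded by Definition~\ref{D:verytardy}. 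This is routine but is where care is needed.
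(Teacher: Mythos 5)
Your proof is correct and is essentially the paper's own argument: take $Y$ with $A\cup Y$ being $2n$-tardy, adjoin $Y$ as the new top c.e.\ component to the $2n$-c.e.\ witness for $\overline{A\cup Y}$ to get a $(2n{+}1)$-c.e.\ witness for $\bar A$, and check the delay condition by the same case split on whether $x\in Y_s$. One small slip: by Definition~\ref{D:verytardy}, ``$A\cup Y$ is $2n$-tardy'' hands you an index $e$ with $X^{2n}_e$ (so $n'$ is exactly $2n$, necessarily even), not an arbitrary $n'\le 2n$, which dissolves the parity/padding concern you flag at the end.
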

\begin{proof}
	If \( Q_{2n+1}(A) \) then  \( Q_{2n}(A \cup Y) \) holds for some \( Y \) disjoint from \( A \).  By assumption \( A \cup Y \) is \( 2n \)-tardy.  Thus, if \( p(s) \) is a total computable function, there is some \( 2n \)-c.e.\ set \( X^{2n}=X_1 - X_2 + X_3 - \ldots - X_{2n} \) equal to \( \overline{A \cup Y} \) such that \( x \in X^{2n}_s \implies x \not \in A_{p(s)} \).  Let 
	\begin{center}
	\( X^{2n+1}=X^{2n}+Y=X_1 - X_2 + X_3 - \ldots - X_{2n} + Y \).  
	\end{center}
	Since \( Y \cap A = \emptyset \),  \( X^{2n+1} = \bar{A} \) and \( x \in X^{2n+1}_s \implies x \not \in A_{p(s)} \).  Since \( p(s) \) was arbitrary, \( A \) is \( 2n+1 \)-tardy.
\end{proof}

\begin{lemma}\label{lem:Q_2nimptardy}
	$Q_{2n}(A)$ implies $A$ is $2n$-tardy.
\end{lemma}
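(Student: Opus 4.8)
The goal is to show that if $Q_{2n}(A)$ holds then, for \emph{every} nondecreasing computable function $p(s)$, there is a fixed $2n$-c.e.\ presentation $X^{2n}_e$ with $X^{2n}_e = \overline A$ that delays all elements by $p$. So we begin by fixing an arbitrary nondecreasing computable $p$ and an enumeration $\{A_s\}$ of $A$; we then use the combinatorial data supplied by $Q_{2n}(A)$ to build the required $2n$-c.e.\ set. The top-level witness $C \supset_m A$ will be used to control which balls can legally be ``late'': since $A$ is a major subset of $C$, $\overline C \subseteq \overline A$ and $\overline A - \overline C = C - A$ is the arena in which the approximation to $\overline A$ can disagree with the (eventually correct) approximation to $\overline C$. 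The key point is that $\overline C$ itself is c.e., hence $1$-c.e., and it approximates $\overline A$ from ``outside''; the remaining $2n-1$ levels of the $2n$-c.e.\ set must massage the set $C - A$ so that membership in $\overline A$ restricted to $C$ is decided slowly.

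**Setting up the players.** Here is where the nested quantifier block of $Q_{2n}(A)$ does its work. We will build sets $B_1 \supseteq B_2 \supseteq \cdots \supseteq B_n$ (all $\subseteq C$) as an explicit $n$-level c.e.\ approximation: intuitively $B_i \cap (C-A)$ will record the balls in $C - A$ that our intended presentation has currently committed to ``level $i$'' of being out of $\overline A$. The universal quantifier $(\forall S \sqsubset C)$ is applied to $S = C$ itself (so $S - A = C - A$ and $S \cap T_j = T_j$), which is the only split we need. Given the $B_i$'s, $Q_{2n}(A)$ hands back $D_1 \supseteq \cdots \supseteq D_n$ and then $T_1 \supseteq \overline C$, $T_2 \supseteq \cdots \supseteq T_n$ with the crucial biconditional: provided $B_i \cap (C-A) = D_i \cap (C-A)$ for all $i$ (which we engineer by \emph{defining} $B_i$ to track $D_i$ on $C - A$, using the recursion theorem to know the indices in advance), we get the ``then'' clause, which chains $A$ and the $T_{i+1}$'s together so that $(A \cup T_{i+1}) \cap T_i = B_i \cap T_i$ and $A \cap T_n = B_n \cap T_n$. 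Reading this chain of equations as a telescoping alternating sum, $\overline A$ restricted to $C$ is expressed as $\overline C \cup (T_1 - T_2) \cup (T_3 - T_4) \cup \cdots$ intertwined with the $B_i$'s — i.e.\ as a $2n$-c.e.\ set whose enumeration speed we control via how fast we feed elements into the $B_i$'s and $D_i$'s.

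**Running the delay.** With the presentation fixed, the enumeration is driven by $p$: when the standard enumeration of $A$ would put some $x \in C - A_s$ into $A$ at a stage so soon that it violates the delay requirement relative to some candidate $X^{2n}$-stage, we instead have committed $x$ to the appropriate $B_i$/$D_i$ level at a stage $s'$ with $p(s') $ still below the current stage, so that by robustness of the $n$-c.e.\ presentation of $A$ one can read off that $x \in X^{2n}_{e,s'} \implies x \notin A_{p(s')}$. Since $B_i$ on $C-A$ equals $D_i$ on $C-A$ by construction, the ``then'' clause of $Q_{2n}(A)$ is in force for \emph{every} choice of $B$'s consistent with our commitments, and the telescoping identity gives $X^{2n}_e = \overline A$ outright, with the delay built in. As $p$ was arbitrary and the index $e$ is obtained uniformly (via the recursion theorem), $A$ is $2n$-tardy.

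**Main obstacle.** The delicate part is the bookkeeping that makes the hypothesis $B_i \cap (C-A) = D_i \cap (C-A)$ genuinely hold: we must define the c.e.\ sets $B_i$ \emph{before} knowing $D_i$, yet have them agree with $D_i$ on $C - A$. This is exactly a recursion-theorem fixed-point argument — we pass an index for the functional $\vec B \mapsto (\text{the }D_i\text{'s that }Q_{2n}\text{ produces})$ and close the loop — but one must check the functional is total and that the back-and-forth between ``what we've enumerated into $B_i$'' and ``what $D_i$ currently looks like'' does not race ahead of the delay function $p$. I expect the genuinely new content of the proof to be precisely this synchronization, with everything after it (verifying the telescoping sum is $\overline A$ and that the delay inequality holds) being a routine unwinding of Definition~\ref{D:verytardy} and the robustness of $n$-c.e.\ presentations.
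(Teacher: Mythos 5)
Your high-level narrative correctly identifies the crux — you must commit to $\vec B$ before you know $\vec D$, yet need $B_i \cap (S - A) = D_i \cap (S - A)$ — but the mechanism you propose for resolving it does not work, and it is precisely here that the paper's proof diverges sharply from yours.

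You propose to close the loop with a recursion-theorem fixed point applied to ``the functional $\vec B \mapsto (\text{the } D_i\text{'s that } Q_{2n} \text{ produces})$.'' No such functional exists: the quantifier in $Q_{2n}(A)$ is $(\forall \vec B)(\exists \vec D)\cdots$, which asserts only that some witnessing $\vec D$ exists for each $\vec B$; it does not supply any effective way to extract $\vec D$ from an index for $\vec B$. Since the body of $Q_{2n}$ is (after fixing the other sets) a $\Pi^0_2$ condition on $\vec D$, you cannot even search for a correct $\vec D$ in a way that stabilizes. So there is no index to feed the recursion theorem, and the fixed point you describe cannot be formed. Compounding this, you collapse the $(\forall S \sqsubset C)$ quantifier to the single choice $S = C$; that choice throws away exactly the degree of freedom needed to recover from not knowing $\vec D$.

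The paper's actual proof attacks this with a guessing apparatus that you do not use. It keeps $\vec D$ \emph{unknown} and instead enumerates all candidate tuples $\vec D_e = (W_{e_0},\dots,W_{e_n})$ and $\vec T_j$, partitions $C$ into disjoint splits $C = \bigsqcup_\alpha S_\alpha \sqcup S_{-1}$ with $\alpha = (e,j)$, and builds a single $\vec B$ that locally tracks the guess $\vec D_e$ on the region $S_{e,j}$. For the tuple $\vec D$ that EXISTS actually plays, the hypothesis \protect\ref{E:Q_2nif} then holds with $S = S_{e,j}$ for every $j$, and among these some $j^*$ also satisfies the conclusion \protect\ref{E:Q_2nthen}, since ``satisfies on $S_e$'' relativizes to the sub-split $S_{e,j^*}$. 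A $\Pi^0_2$ predicate $F(\alpha)$, approximated in stages, identifies the least such correct $\alpha$, and the Slowdown Lemma is used to turn the resulting approximations into an honest $2n$-c.e.\ presentation $X^\alpha$. Your telescoping reading of \protect\ref{E:Q_2nthen}, with odd components coming from the $T_i$'s and even from the $D_i$'s, is the right intuition and does appear in the paper's verification, but it only kicks in after the splitting-and-guessing machinery has been set up. As written, your proof has a genuine gap at the central step and would need to be reorganized around the $S_{e,j}$-splitting idea rather than the recursion theorem.
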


This is a generalization of Theorem~3.3 in \cite{codable-sets-and-orbits-of-computably-enumerable-sets} and we retain their approach but  present it as a modern \( \Pi^0_2 \) guessing argument.

\begin{proof}

Fix  $A$ and $C$ (and indexes for them) such that $A$ satisfies ${Q}_{2n}(A)$ via $C$ and $A \subseteq C \searrow A$ where the latter property can be guaranteed purely by change of index.    Following  \cite{codable-sets-and-orbits-of-computably-enumerable-sets}, we think of \( {Q}_{2n}(A) \) as a two player game between the $\exists$-player (called EXISTS) who plays the sets \( \vec{D}= (D_1, D_2, \ldots D_n) \) and \( \vec{T}=(T_1, \ldots, T_n) \) and the $\forall$-player (called FORALL) who plays the sets $B_1, B_2, \ldots B_n$ and $S \sqsubset C$.  Should \( A, C, S, \vec{D}, \vec{T} \) witness the satisfaction of \( {Q}_{2n}(A) \) we say the EXISTS player wins.  Otherwise, the FORALL player wins.  Since \( C \) witnesses the satisfaction of \( {Q}_{2n}(A) \), the EXISTS player must have a winning strategy.  Given any total computable function \( p(s) \), the proof will proceed by specifying a strategy for the FORALL player such that winning response \( \vec{D}, \vec{T} \) of the EXISTS player allows us to build a \( 2n \)-c.e.\ set \( X^{2n} \) witnessing that \( A \) is \( 2n \)-tardy.


Given a total computable function \( p(s) \),  FORALL will respond by building \( \vec{B} \).  However, in the construction of \( B \), FORALL will want to use information about the particular sets \( \vec{D}, \vec{T} \) played by EXISTS,  but \( \vec{B} \) must be built without knowledge of \( \vec{D} \) or \(  \vec{T} \).  We let \( \vec{B} \) react to the particular choice of \( \vec{D} \) by simultaneously building \( \vec{B} \) and a sequence of sets \( S_{e} \sqsubset C \) such that on \( S_{e} \), the collection \( \vec{B} \) plays against \( \vec{D_e}=(W_{e_0},W_{e_1},\ldots,W_{e_n})\).  During this construction, \( \vec{B} \) will be built so that, for every \( e \), property \eqref{E:Q_2nif} holds for \( S=S_{e}, \vec{D}=\vec{D_e} \). Thus, for EXISTS to have a winning strategy, there must be some \( \vec{T} \) witnessing the satisfaction of \eqref{E:Q_2nthen}.

We now further divide up the sets \( S_{e} \) into the sets \( S_{e,j} \) with \( S_{e}=\bigsqcup_{j \in \omega} S_{e,j} \) so that FORALL  builds \( \vec{B} \) to play against \( \vec{D_e}, \vec{T_j} \) on \( S_{e,j} \).  Since \( S \) must be played without knowledge of \( \vec{T} \), we appear to run the risk that the winning strategy for EXISTS never plays \( \vec{T_j} \) against \( S_{e,j} \).   However, since \( \vec{B}, \vec{D_e}, S_{e} \) satisfies \eqref{E:Q_2nif}, there is some \( j \) such that  \(\vec{B}, \vec{T_j},  S_{e} \) satisfy \eqref{E:Q_2nthen}.  But as \( S_{e,j} \subset S_e \), it follows that \(\vec{B}, \vec{T_j},  S_{e,j} \) satisfy \eqref{E:Q_2nthen}.  Thus, provided for all \( e \) we maintain \eqref{E:Q_2nif} for \( S_{e}, \vec{D_e} \),  we may assume that for some \( e,j \) the sets \(\vec{B}, \vec{D_e}, \vec{T_j},  S_{e,j} \) satisfy both \eqref{E:Q_2nif} and \eqref{E:Q_2nthen}.

We let \( \alpha \) range over indexes \( e,j \) for \( n \) tuples of c.e.\ sets and define

\begin{align*}
	S_\alpha &= S_{e,j}\\
	D^{\alpha}_i&=W_{e_i}\\
	T^{\alpha}_i&=W_{j_i}\\
	\shortintertext{where we stipulate that our indexes satisfy}
	D^{\alpha}_1 &\subseteq C\\
	D^{\alpha}_{i+1} &\subseteq D^{\alpha}_{i}  \\
	T^{\alpha}_{i+1} &\subseteq T^{\alpha}_{i}  
\end{align*}

Relative to a particular choice of \( \vec{B} \), the predicate \( F(\alpha) \) asserting that the sets \(\vec{B}, \vec{D_e}, \vec{T_j},  S_{e,j} \) satisfy both \eqref{E:Q_2nif} and \eqref{E:Q_2nthen} is \( \Pi^0_2 \).  Thus, there is a uniformly computable sequence of predicates \( F_s(\alpha) \) referring only to the commitments we have made about \( \vec{B} \) by stage \( s \) in our construction such that \( F(\alpha) \leftrightarrow (\exists^{\infty}s)\ F_s(\alpha) \).  Using this predicate, we define a strong array of finite sets \( U^{\alpha}_{k} \) for every \( \alpha \) and \( k \in [1,n] \) as follows. 

\begin{align*}
x\in U^{\alpha}_{1, s} &\iff x\in U^{\alpha}_{1, s-1} \ \lor \ [\ s \geq x \land x\in (T^{\alpha}_{1, s}- C_s)  \land F_s(\alpha) \ ].\\
x\in U^{\alpha}_{l, s} &\iff x\in U^{\alpha}_{l-1, s}\cap T^{\alpha}_{l, s}\\
\intertext{By way of the slowdown lemma \cite{SoareBook} applied to the above arrays, we define}
X^{\alpha}_{2i-1} &= \bigcup_{s \in \omega} U^{\alpha}_{i,s}\\
\shortintertext{satisfying} X^{\alpha}_{2i-1,s} &\subset U^{\alpha}_{i,s}
\end{align*}

If we build \( S_{e,j} \) as described, there must be some least \( \alpha \) for which \( F(\alpha) \) holds by the remarks above. For that \( \alpha \), we have \( X^{\alpha}_1 \supset^{*} \bar{A} \) since \eqref{E:Q_2n} requires that \( T_1 \supset \bar{C} \) and \( F_s(\alpha) \) holds for infinitely many \( s \). So, \( U^{\alpha}_{1} \supset \bar{C} \) and \( A \subset_m C \). We also have \( T_{i+1} \subseteq T_i \) and \( X^{\alpha}_{2i-1} = T^{\alpha}_i \cap X^{\alpha}_{1} \) by definition. So, if the sequence \( \vec{T^{\alpha}} \) witnesses that \eqref{E:Q_2nthen} holds, we may replace each \( T^{\alpha}_i \) with \( X^{\alpha}_{2i-1} \) without falsifying \eqref{E:Q_2nthen}.

We now build \( S_{\alpha} \) with the intention that (with finitely many exceptions) every element that is in \( X^{\alpha}_1 \cap A \) is in \( S_\alpha \).   If \( x\in C_{s+1}-C_s \), take the least \( \alpha \) such that \( x\in U^{\alpha}_{1, s} \) and enumerate \( x \) into \( S_{\alpha} \).  If no such \( \alpha \)  exists, enumerate \( x \) into the garbage set \( S_{-1} \).  Note that  \( C= \bigsqcup_{\alpha\in 2^{<\omega}} S_{\alpha} \sqcup S_{-1} \) by construction, so,  \( S_{\alpha} \sqsubset C \) for every \( \alpha \).  Furthermore, by construction, once \( x \) enters \( C \) it can no longer enter \( U^{\alpha}_{1} \) for any \( \alpha \). Suppose \( \alpha \) is the least such that \( F(\alpha) \) holds. Since \( U_1^{\beta} \) is finite  for every \( \beta < \alpha \), we have  \( U^{\alpha}_{1} \cap C \subset^{*} S_{\alpha} \).  Hence,  for all \( i \in [1,n] \)

\begin{align}\label{E:XoddsubSalpha}
X^{\alpha}_{2i-1} \cap C \subset^{*} S_{\alpha}. 
\end{align}
 Conversely, \( S_\alpha \subseteq X^{\alpha}_1 \).  We are now ready to define  \( \vec{B} \) and the even components of \( X^{\alpha} \).
Let 
\begin{align*}
	X^{\alpha}_{2i}&= S_{\alpha} \cap D^{\alpha}_{i}\\
	\shortintertext{where by way of the slowdown lemma \cite{SoareBook}, we ensure that}
	X^{\alpha}_{2i+2} &\subseteq X^{\alpha}_{2i}\searrow X^{\alpha}_{2i+2}
\end{align*}

Since \( X^{\alpha}_{1}\ \cap\ C\subset^{*} S_{\alpha} \) and \( X^{\alpha}_{1}\ \cap\ \overline{C}\subseteq \bar{A} \),  requiring \( X^{\alpha}_{2i} \) to be a subset of \( S_{\alpha} \) is no handicap to ensuring $X^\alpha=\overline{A}$.  If \( F(\alpha) \) holds, then we claim that 

\begin{equation}\label{E:even-goes-in}
	\begin{aligned}
		& X^{\alpha}_{2j-1} \cap X^{\alpha}_{2j} \cap \bar{A} \subseteq X^{\alpha}_{2j+1}\\ 
		& X^{\alpha}_{2n-1} \cap X^{\alpha}_{2n} \cap \bar{A}=\emptyset.
	\end{aligned}	
\end{equation}

To see this, let \( x \in X^{\alpha}_{2j-1} \cap X^{\alpha}_{2j} \cap \bar{A}  \).  As \( X^{\alpha}_{2j} = D^{\alpha}_{j} \cap S_\alpha, \) we have  \( x \in D^{\alpha}_{j} \cap (S_\alpha - A) \) which by \eqref{E:Q_2nif} is contained in \(  B_j \).  By a prior remark, we may substitute \( X^{\alpha}_{2j-1} \) in for \( T_j \) in \eqref{E:Q_2nthen} and as \mbox{\( x \in B_j \cap S_\alpha \cap X^{\alpha}_{2j-1} \),} we have \mbox{\(x \in  A \cup X^{\alpha}_{2j+1} \).}  Since  \( x \not\in A \), we have \( x \in X^{\alpha}_{2j+1} \).  Moreover, by similar reasoning, $X^{\alpha}_{2n-1} \cap X^{\alpha}_{2n} \cap \bar{A}=\emptyset$. We then derive the following containments.

\begin{equation}\label{E:barAin}
\begin{aligned}
	\bar{A} \cap S_{\alpha} &\subseteq X^{\alpha} \\ 
	\bar{A} &\subseteq^{*} X^{\alpha} 
\end{aligned}
\end{equation}

For the first containment, if \( x \in \bar{A} \cap S_{\alpha}  \) then, as \( S_{\alpha} \subseteq X^{\alpha}_1 \), there is a maximal \( j \) such that \( x \in X^{\alpha}_{2j-1} \).  Since the even indexed components of \( X^{\alpha} \) are nested, if \( x \not\in X^{\alpha}_{2j} \) then \( x \in X^{\alpha} \), and we are done. If \( x \in X^{\alpha}_{2j} \), then \eqref{E:even-goes-in}  yields a contradiction. The second containment follows since \( X^{\alpha} \supseteq X^{\alpha}_1 - S_{\alpha} \) (by definition, each $X^\alpha_{2j}\subseteq S_\alpha$, so no elements outside of $S_\alpha$ are removed from $X^\alpha$)  and \( X^{\alpha}_1 \supseteq^{*} \bar{A} \).  We now define \( \vec{B} \) to that the other direction of containment and the tardiness property hold.

\begin{equation}\label{E:Bdef}
	x \in B_i \iff (\exists \alpha)(\exists s)\left[ x \in X^{\alpha}_{2i,s} \land x \not\in A_{p(s)}  \right]
\end{equation}

Tracing out the definition of \( X^{\alpha}_{2i} \), it is evident that on \( S_{\alpha} - A \) we have \mbox{\( B_i = D^{\alpha}_i=W_{e_i} \).}   Hence, by our earlier arguments,  there is  some \( \alpha \) such that \( F(\alpha) \) holds.  Now let \( \alpha \) be the least such.  Since \( B_i \cap S_{\alpha} \subseteq D^{\alpha}_i \cap S_{\alpha} \), using \eqref{E:Q_2nthen} we see

\begin{equation*}
	 A \cap X^{\alpha}_{2i-1} \cap S_{\alpha} \subseteq B_i \cap X^{\alpha}_{2i-1} \cap S_{\alpha} \subseteq D^{\alpha}_i \cap S_\alpha = X^{\alpha}_{2i}.
\end{equation*}

Thus, if \( x \in A \cap S_{\alpha} \) then \( x \not\in X^{\alpha} \).  By \eqref{E:XoddsubSalpha}, \( X^{\alpha} \cap C \subseteq^{*} S_{\alpha} \).  Since \( X^{\alpha} \cap C \subseteq^{*} S_{\alpha} \) and \( A \subseteq C \), this entails \( \bar{A} \supseteq^{*} X^{\alpha}  \).  Putting this together with \eqref{E:barAin}, we conclude

\begin{align*}
	\bar{A} \cap S_{\alpha} &= X^{\alpha} \cap S_{\alpha} \\
	\bar{A} &=^{*} X^{\alpha} 
\end{align*}

We now argue that \( X^{\alpha} \) has the desired tardiness properties.  Suppose \( x \in X^{\alpha}_1 \) and \( x \in A \cap S_{\alpha} \).  Let \( j \) be the greatest such that \( x \in X^{\alpha}_{2j-1}  \).  Now suppose \( x  \) enters \( X^{\alpha}_{2j} \) at stage \( s \).  If \( x \in A_{p(s)} \) then by \eqref{E:Bdef} \( x \not\in B_i \). But as \( x \in X^{\alpha}_{2j-1} \cap S_{\alpha} \), it follows from \eqref{E:Q_2nthen} that \( x \not\in A \).  This is a contradiction.  Therefore, 

\begin{equation*}
	x \in S_{\alpha} \cap X^{\alpha}_s \implies x \not\in A_{p(s)}
\end{equation*}

Now set \( X_{2i}= X^{\alpha}_{2i} \) and \( X_{2i-1}=^{*} X^{\alpha}_{2i-1} \) where we build \( X_{2i-1} \) by removing the finitely many members by \eqref{E:XoddsubSalpha} of \( A \cap \bar{S_\alpha} \cap X^{\alpha}_1 \)   from \( X^{\alpha}_{2i-1}  \) and adding the finitely many members of \( \bar{A} - X^{\alpha}_1 \).  The set \( X \) witnesses that \( A \) is \( 2n \)-tardy with respect to \( p(s) \).   Since \( p(s) \) was arbitrary, we can conclude \( A \) is \( 2n \)-tardy.

\end{proof}

Taken together these lemmas suffice to establish Theorem \ref{T:Q_2nimptardy}.

\section{Proper Satisfaction \texorpdfstring{\( Q_n \)}{Q_n}}\label{S:Q_nsat}

At this point we have a countable collection of properties \mbox{\( Q_n\) for \(n \geq 2 \)} preserved under automorphism guaranteeing incompleteness.  It is easily verified that \( Q_{n}(A)\)  implies \(Q_{n+1}(A) \) so to illustrate countably many incomplete orbits, we must show this hierarchy of properties does not collapse.  In particular, it will suffice to show that for every \( n > 2 \) there is a properly \( n \)-tardy \( A \) satisfying \( Q_n(A) \) as we can then define 
\begin{equation*}
	\hat{Q}_n(A) \iff Q_n(A) \land \lnot Q_{n-1}(A) \land \ldots \land \lnot Q_2(A)
\end{equation*}

Since by Theorem \ref{T:Q_2nimptardy}, any set satisfying \( Q_n(A) \) must be \( n \)-tardy it follows that our set \( A \) satisfies \( \hat{Q}_n(A) \) and that the properties \( \hat{Q}_n\) for \( n \geq 2 \) give countably many disjoint orbits.

\begin{theorem}\label{T:Q_nsatisfied} 
	For all \( m \geq 2 \) there is a properly \( m \)-tardy \( A \) satisfying \( Q_m(A) \).
\end{theorem}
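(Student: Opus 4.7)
The plan is to construct a properly $m$-tardy set $A$ satisfying $Q_m(A)$ by iterating and augmenting the Harrington--Soare construction that witnesses $Q_2$. The base case $m=2$ reduces to Harrington--Soare: any noncomputable $2$-tardy c.e.\ set is automatically properly $2$-tardy, because a noncomputable c.e.\ set $A$ cannot have $\overline{A}$ c.e., so no $1$-c.e.\ set equals $\overline{A}$, trivially ruling out $1$-tardiness. For $m \geq 3$, I run a tree-of-strategies construction in the spirit of \S\ref{S:3tardy}, combining positive requirements that maintain $Q_m$ with negative requirements that diagonalize against all potential witnesses of $(m-1)$-tardiness.

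For even $m = 2n$, I simultaneously build $A$ and a c.e.\ $C$ with $A \subset_m C$, and, for every index $e$ coding an opponent play $\vec{B}_e = (W_{e_1} \supseteq \cdots \supseteq W_{e_n}) \subseteq C$, a response chain $\vec{D}_e = (D_1^e \supseteq \cdots \supseteq D_n^e)$. Following the argument of Lemma~\ref{lem:Q_2nimptardy} in reverse, I split $C$ into pieces $S_{e,j} \sqsubset C$ on which $\vec{D}_e$ together with a suitable $\vec{T}_{e,j}$ will witness the implication from \ref{E:Q_2nif} to \ref{E:Q_2nthen}; this is the iterated small-major-subset action of \cite{codable-sets-and-orbits-of-computably-enumerable-sets} applied $n$ times along nested chains. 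Alongside the positive requirements, for a fixed computable $p$ (say $p(s)=s+1$) I impose the proper-tardiness requirements
\[
  \mathcal{R}_e :\quad X^{2n-1}_e = \overline{A} \implies (\exists x)(\exists s)\bigl[\,x \in X^{2n-1}_{e,s}\ \wedge\ x \in A_{p(s)}\,\bigr],
\]
which ensure $A$ is not $(2n-1)$-tardy. For odd $m = 2n+1$, I use the definitional clause $Q_{2n+1}(A) \iff (\exists Y \subseteq \overline{A})\, Q_{2n}(A \cup Y)$: build $A$ together with a disjoint c.e.\ $Y \subseteq \overline{A}$ so that $A \cup Y$ satisfies $Q_{2n}$ via the even-case construction, while adding analogous negative requirements to prevent $A$ itself from being $2n$-tardy. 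The freedom to enumerate into $Y$ rather than $A$ is precisely what lets these diagonalizations succeed without disturbing the $Q_{2n}$-structure on $A \cup Y$.

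The principal obstacle is the priority conflict between the positive $Q_m$ requirements, which require elements to enter $A$ slowly and in patterns consistent with every response chain $\vec{D}_e$ on every piece $S_{e,j}$, and the negative requirements $\mathcal{R}_e$, which want to enumerate specific witnesses into $A$ fast enough to beat $p$. Following the scheduling method of \S\ref{S:3tardy}, I will route witness balls through the pinball tree so that they are delayed by just enough positive nodes to respect $Q_m$ but still reach $A$ by stage $p(s)$. The main technical point is verifying that the extra enumerations performed by $\mathcal{R}_e$ do not falsify the ``then'' clause \ref{E:Q_2nthen} on any $S_{e,j}$ belonging to a higher-priority positive node; this can be arranged by funneling witnesses into $S_\alpha$-pieces associated with lower-priority nodes $\alpha$, leaving the high-priority structure intact.
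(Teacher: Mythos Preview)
Your proposal has genuine gaps that prevent it from going through, and it also misses the central idea of the paper's proof.

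\textbf{The missing key idea.} The paper does \emph{not} build response chains $\vec{D}_e$ for each opponent play during the construction of $A$. Instead, it separates the argument into two independent phases. First (Lemma~\ref{lem:major-2n-tardy}) it builds $A$ and a major superset $C$ together with strong structural conditions on the $2n$-c.e.\ tardiness witnesses: for every total $p$ there is $X^{2n}_e=\overline{A}$ whose components are nested, satisfy $X^{2n}_{e_{k+1}}=C\searrow X^{2n}_{e_{k+1}}$, and $X^{2n}_{e_{i+1}}=X^{2n}_{e_i}\searrow X^{2n}_{e_{i+1}}$. Second, given an \emph{arbitrary} $\vec{B}$, a separate $\Pi^0_2$ guessing construction defines the delay function $p_\alpha(s)$ as the time it takes elements of $D_i\cap S_\alpha$ to appear in $B_i\cup A$, takes the tardiness witness $X^{2n}$ for that $p_\alpha$, and sets $D_i$ to be (essentially) $X^{2n}_{2i}$ and $T^\alpha_i$ to be (essentially) $X^{2n}_{2i-1}$. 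The nested $2n$-c.e.\ structure of the tardiness witness then \emph{is} the nested $(\vec{D},\vec{T})$ structure required by $Q_{2n}$, and tardiness against $p_\alpha$ directly forces $A\cap S_\alpha\cap T^\alpha_i\subseteq B_i$. Your phrase ``iterated small-major-subset action applied $n$ times along nested chains'' is not a technique that exists; without the idea of harvesting $\vec{D},\vec{T}$ from the tardiness witnesses, there is no concrete way to build $\vec{D}$ responding correctly to an arbitrary $\vec{B}$ and every $S\sqsubset C$ simultaneously.

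\textbf{The $\mathcal{R}_e$ strategy is inadequate.} Fixing $p(s)=s+1$ cannot work: the $\mathcal{N}$ requirements that make $A$ $2n$-tardy must delay every ball targeted for $A$ until each $p_{e'}$ converges, so no element can be guaranteed to enter $A$ one step after any event. The paper instead plays a mind-changing game: when $x$ enters the current odd component of $Y^{2n-1}_e$, target $x$ for $A$ (sending it into the next even component of each $X^{2n}$ being built); if $x$ leaves $Y^{2n-1}_e$ before reaching $A$, cancel (sending $x$ into the next odd component). Since $Y^{2n-1}_e$ has only $2n-1$ components while the $X^{2n}$ have $2n$, $A$ has one more mind-change available and eventually wins. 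The function $p$ that witnesses failure of $(2n-1)$-tardiness is obtained from the construction via the recursion theorem, not fixed in advance. Your final paragraph about ``funneling witnesses into $S_\alpha$-pieces associated with lower-priority nodes'' has no analogue in the actual argument, since the $S$-pieces play no role whatsoever in the construction of $A$.
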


Again we consider the even and odd cases separately.  We first work to show that there is a properly \( 2n \)-tardy satisfying \( Q_{2n} \) and then modify this argument to yield a properly \( 2n+1 \)-tardy satisfying \( Q_{2n} \).     

Ideally our argument in the even case would establish that any \( 2n \)-tardy that has a major superset satisfies \( Q_{2n} \), but this appears to be insufficient.  The role played by \( C \) is not only that of a major superset but also provides an early warning that something in \( T_1 \) may threaten to enter \( A \) (i.e., enter \( T_2 \)).  That is, we must wait until elements enter \( C \) before we can target them for entry into \( A \).  Thus, we first construct sets \( A \) and \( C \) with these properties.

\subsection{Building \texorpdfstring{\( A \)}{A} and \texorpdfstring{\( C \)}{C}}

\begin{lemma}\label{lem:major-2n-tardy}
	For every \( n \geq 1 \) there is a properly \( 2n \)-tardy set \( A \) and a c.e.\ set \( C \) with \( C\supset_m A \) such that for every total computable function \( p \) there is a \( 2n \)-c.e.\ set \( X_e^{2n} \) such that
	\begin{subequations}\label{E:A-and-C}
	\begin{align}
		& \overline{A} = X_e^{2n}  = (X^{2n}_{e_{1}} - X^{2n}_{e_{2}}) \cup \ldots \cup (X^{2n}_{e_{{2n-1}}} - X^{2n}_{e_{{2n}})} \\
		& (\forall x,s) \left( x \in X^{2n}_{e, s} \implies x \not\in A_{p(s)}  \right) \\
		& (\forall k < 2n) \left[ X^{2n}_{e_{k+1}} = C\searrow X^{2n}_{e_{k+1}} \right]\label{E:A-and-C:C-first}\\
		& X^{2n}_{e_1} \supseteq  X^{2n}_{e_2} \supseteq \ldots \supseteq X^{2n}_{e_{2n-1}} \supseteq X^{2n}_{e_{2n}} \label{E:A-and-C:nested} \\
		& (\forall i < 2n)[ X^{2n}_{e_{i+1}} = X^{2n}_{e_{i}} \searrow X^{2n}_{e_{i+1}} ] \label{E:A-and-C:nicely-nested}
	\end{align}
\end{subequations}
\end{lemma}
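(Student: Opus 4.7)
The plan is to build \(A\) and \(C\) simultaneously by a tree-of-strategies argument modeled on the pinball construction of Theorem~\ref{T:3tardy}, but with \(2n\) layers of delay rather than three. There will be three families of requirements. The \emph{tardiness requirements} \(N_e\), one for each index \(e\), say that if \(\varphi_e\) is total then the c.e.\ sets \(X^{2n}_{\gamma_1}, \dots, X^{2n}_{\gamma_{2n}}\) built at the node \(\gamma\) assigned to \(N_e\) satisfy conditions (a)--(e) of \eqref{E:A-and-C} with \(p = \varphi_e\). The \emph{major superset requirements} \(M_e\) enforce \(\overline{C} \subseteq W_e \implies \overline{A} \subseteq^{*} W_e\). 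Finally, the \emph{properness requirements} \(P_{e,i}\) say: if \(\varphi_i\) is total and \(X^{2n-1}_e = \overline{A}\), then some \(x \in X^{2n-1}_{e,s}\) has \(x \in A_{\varphi_i(s)}\), thereby preventing \(A\) from being \((2n-1)\)-tardy.

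I would first set up the pinball geometry so that a ball \(x\) placed on the machine at an \(N_e\)-node \(\gamma\) must traverse \(2n\) horizontal layers on its descent toward \(A\). Upon being placed, \(x\) is enumerated into \(C\) and into \(X^{2n}_{\gamma_1}\) simultaneously; at each subsequent \(\gamma\)-expansionary stage, \(x\) drops one layer and is alternately removed from or reinserted into successive \(X^{2n}_{\gamma_j}\), as in the three-layer version of Section~\ref{S:3tardy}. Because no ball is ever enumerated into \(X^{2n}_{\gamma_{k+1}}\) before first entering \(C\), property \eqref{E:A-and-C:C-first} is immediate; the nesting \eqref{E:A-and-C:nested} and the nice-enumeration property \eqref{E:A-and-C:nicely-nested} are built in by the same single-enumeration discipline used in the proof of Theorem~\ref{T:3tardy}; and the tardiness delay follows from the definition of \(\gamma\)-expansionary, which refuses to fire until \(\varphi_e\) has converged on all pending witnesses.

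For the major superset requirements I would use the standard \(e\)-state technique: each \(M_e\) is assigned a node that, based on a \(\Pi^0_2\) guess about \(\overline{C} \subseteq W_e\), holds back balls of the wrong \(e\)-state from descending all the way into \(A\). Because \(M_e\) acts only by postponing or cancelling entries into \(A\), and the pinball machine already tolerates cancellation at every layer (sending the ball into \(X^{2n}_{\gamma_{2n}}\) instead), combining \(M_e\)-nodes with \(N_e\)-nodes on the tree is routine and mirrors the garbage-collection already performed in Section~\ref{S:3tardy}.

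The main obstacle is the properness requirement. At a \(P_{e,i}\)-node \(\beta\), I would pick a large fresh witness \(x_\beta\), wait for \(x_\beta \in X^{2n-1}_{e,s}\) at some stage \(s\), and then try to push \(x_\beta\) through the machine fast enough that it enters \(A\) by stage \(\varphi_i(s)\); this forces either \(X^{2n-1}_e \neq \overline{A}\) (because \(X^{2n-1}_e\) must remove \(x_\beta\) in time) or a tardiness violation for the \((2n-1)\)-c.e.\ presentation. The crucial point is that a \(2n\)-c.e.\ approximation affords \(A\) exactly one more oscillation than a \((2n-1)\)-c.e.\ one, so the extra layer is precisely the slack needed to diagonalize. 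Coordinating this with the tardiness nodes requires verifying Assumption~\hyperref[eq:A]{$\mathcal{A}$}: that only finitely many witnesses placed by each \(\beta \subset f\) actually enter \(A\). This is arranged by assigning a single permanent witness per \(P_{e,i}\)-node and resetting witnesses that fall to the right of the current approximate true path, so at most one witness per \(\beta\) ever reaches \(\lambda\). The verification then proceeds by induction along the true path, checking that each outcome is guessed infinitely often and that the resulting \(X^{2n}_e\), \(C\), and \(A\) meet all five clauses of \eqref{E:A-and-C}.
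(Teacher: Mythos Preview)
Your overall architecture is right---three families of requirements on a tree, with the \( \mathcal{N} \)-modules running a \( 2n \)-layer pinball delay---but the properness strategy, as you have stated it, will not work.

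The requirement \( P_{e,i} \) you wrote down asks that for \emph{every} total \( \varphi_i \) and every \( (2n-1) \)-c.e.\ set \( X^{2n-1}_e=\overline{A} \), some \( x \in X^{2n-1}_{e,s} \) satisfies \( x\in A_{\varphi_i(s)} \).  To show that \( A \) is not \( (2n-1) \)-tardy you only need to exhibit a \emph{single} computable \( p \) with this property; you do not need, and cannot hope, to defeat every \( \varphi_i \).  Your strategy ``wait for \( x_\beta\in X^{2n-1}_{e,s} \) and then push \( x_\beta \) into \( A \) by stage \( \varphi_i(s) \)'' is impossible to carry out when \( \varphi_i \) is fast: the \( \mathcal{N}_{e'} \)-nodes below \( \beta \) each delay the ball until \( \varphi_{e'} \) converges on the relevant stage, and those delays are not under your control.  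What the paper does instead is fix, via the recursion theorem, a single function \( p \) that grows exactly as fast as the construction's own enumeration of \( A \); with respect to that \( p \), ``enter \( A \) by stage \( p(s) \)'' just means ``enter \( A \) in the construction,'' which the pinball machine can actually arrange.  The diagonalization against \( Y^{2n-1}_e \) then becomes a back-and-forth: hold \( x \) out until \( Y^{2n-1}_e \) claims \( x\in\overline{A} \), target \( x \) for \( A \), and if \( Y^{2n-1}_e \) withdraws \( x \) before the ball drops, cancel the attempt (pushing \( x \) into the next odd component at each \( \mathcal{N}_{e'} \)) and wait again.  Because \( Y^{2n-1}_e \) can change its mind one fewer time than the \( 2n \)-layer machine allows, it must eventually fail to respond, and that is where the extra oscillation is used---not in pushing a ball through fast.

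There is a second genuine difficulty you dismiss as ``routine'': the interaction between the major-subset requirements and the properness witnesses.  In the paper \( \mathcal{M}_e \) does not merely hold balls back; it actively targets elements of \( C \) for \( A \) to improve the \( C \)-complementing \( e \)-state of the markers resting on \( C-A \), and in doing so it may steal the very witness \( x \) that \( \mathcal{R}_e \) is holding.  The paper handles this by having each \( \mathcal{R}_e \) reserve an interval containing at least \( 2^e \) elements outside the underlying simple set \( \hat{C}\subseteq C \), so that its marker \( r^e_i \) can be displaced at most \( 2^e-1 \) times (once per increase in \( e \)-state) and still land on a usable witness.  Your sketch does not explain how \( C \) is built (the paper starts from a simple \( \hat{C} \) and enlarges it), nor how a properness witness survives being co-opted by a higher-priority majorness action; without something like the interval-reservation device this interaction is not routine.
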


\begin{proof}[Proof of Lemma \ref{lem:major-2n-tardy}]

To prove the claim, we start with a simple set \( \hat{C} \) with the property that \( \lvert\hat{C}\restriction_{2x}\rvert \leq x \) and simultaneously construct \( C \supseteq \hat{C} \) and \( A \).  During this construction, we  refer to the index of \( C \) as a c.e.\ set so we can measure its speed of enumeration.  We justify this circularity by regarding the construction as a computable function operating on a guess at the index for \( C \) and returning an index for the resulting  \( C \) we build and then applying the recursion theorem.  To effect the construction of \( A \),  we will work to meet the requirements \( \mathcal{N}_e, \mathcal{M}_e, \mathcal{R}_e \) specified below to which we assign priorities \mbox{\( 3e, 3e+1, 3e+2 \)}, respectively.  These requirements are thought of as being laid out vertically in order of priority.  Ultimately, the true construction will take the form of a tree argument in the style of \ref{T:3tardy}. Rather than repeat the standard details of the tree layout, we instead present the argument as if it were a infinite injury pinball argument so as not to hide the real work in the magic of the tree machinery.  Ultimately, however, we will observe that the computable corrections required by infinite injury can simply be considered as the action of the tree when phrased as a \( \Pi^0_2 \) tree argument and can thus be squared with requirements \eqref{E:A-and-C:nested} and \eqref{E:A-and-C:nicely-nested}.  Thus the construction may be regarded as a pinball machine with \( A \) at the bottom of the machine and the requirements stretching upwards.

During the construction, balls (numbers) will be released at requirements of the form \( \mathcal{M}_e \) and \(  \mathcal{R}_e \).  These balls attempt to flow down through the negative requirements below.   When (and if) they reach the bottom, they are enumerated into \( A \).   The organization of this construction is in principle similar to that used before but the extra complexity of a full \( \Pi^0_2 \) tree construction is unnecessary so we abandon it for clarity.

Each negative requirement \( \mathcal{N}_e \) will construct a \( 2n \)-c.e.\ set \( X^{2n}_{e} \) in the hope of meeting the requirement below, falling short only by virtue of computable injury.  By pausing the construction until elements appear in the canonical enumeration of \( C \), we may insist that \eqref{E:A-and-C:C-first} holds.

\begin{equation*}\tag*{\(\mathcal{N}_e\)}
  \label{eq:neg-Q_n-satisfied}
	\begin{aligned}
    (\exists x)&(\varphi_e(x)\mathpunct{\uparrow})  \\
		& \mbox{or} \\
  [X^{2n}_e  = \overline{A}] &\ \&\  (\forall x)(\forall s)[x \in X^{2n}_{e,s}  \implies x \not\in A_{\varphi_e(s)}]
\end{aligned}
\end{equation*}

We act to meet this requirement as follows.  At the start of stage \( s > e \), fix \( l \) to be maximal such that \( (\forall x < l)[\varphi_{e, s}(x)\mathpunct{\downarrow}] \) and put every \( x < l \) into \( X_{e_1} \) that is not already in \( A \) or located below \( \mathcal{N}_e \) along our list of requirements.  If a ball \( x \) targeted for \( A \) by a weaker priority requirement reaches \( \mathcal{N}_e \) at stage \( s \) and it is not yet in \( X_{e_1} \), it is immediately allowed to fall through to the next negative requirement along the path to \( A \).  Otherwise, if \( x \in X_{e_1} \) let \( j \) be the largest index such that \( x \in X_{e_j} \).  Place \( x \) into \( X_{e_{j+1}} \) and delay \( x \) from falling through to the next negative requirement until the first stage \( t \) such that  \( \varphi_{e, t}(s)\mathpunct{\downarrow}\) is reached.  Whenever a weaker requirement decides to cancel its attempt to place some \( x \in X_{e_{2j}} -  X_{e_{2j+1}} \) into \( A \) before completion, \( x \) is placed into \( X_{e_{2j+1}} \).     Note that if \( \varphi_e \) is partial \( X_{e_1} \) will be finite and only finitely many balls will be permanently delayed by \( \mathcal{N}_e \). 

This construction suffices to meet \( \mathcal{N}_e \) modulo the balls put into \( A \) by higher priority requirements.  At the end of the construction, we will observe that the set of elements that so slip past \( \mathcal{N}_e \) is computable. Thus, we can modify \( X^{2n}_e \) to satisfy the requirement without sacrificing any of the desired properties.  We now work to ensure that \( C \supseteq_m A \) via the following requirement.

\begin{equation*}\tag*{\(\mathcal{M}_e\)}
  \label{eq:major-Q_n-satisfied}
    W_e \supseteq \overline{C} \implies W_e \supseteq^{*} \overline{A}
\end{equation*}

Given \( C \) that we are building,  \( A \subset_m C \) requires that if \( W_e \supset \overline{C} \) then \mbox{\( C - A \subseteq^{*} W_e \).}  If we knew from the outset that \( W_e \cap C \) was infinite and \( W_e \supseteq \overline{C} \), we would ensure  all but finitely many members of \( C -A \) are in \( W_e \) by enumerating elements into $A$.  Fixing \( C \) to be a simple set ensures that if \( W_e \supseteq \overline{C} \) then \( W_e \cap C \) is infinite.
  Thus, taking \( C\) simple would suffice to satisfy \( \mathcal{M}_e \).  Since we cannot determine whether \( W_e \) is infinite, we instead  assume that  we have seen the entirety of \( W_e \) and correct our construction if we see more elements enter \( W_e \). 
   In particular, if we see \( W_e \) extend to contain \( \overline{C} \cap [0,l] \), we can respond by enumerating (almost) every \( x <l \) with \( x \in C \) into \( A \) to keep \( C - A \subseteq^{*} W_e \).  To ensure \( C - A  \)  is infinite, 
we absolve the first \( e \) (candidate) members of \( C - A \) from being affected by \( \mathcal{M}_e \).

We fix $\{C_s\}_{s\in\omega}$, a stage wise approximation to \( C \),  such that \( C_0 \) is some infinite computable subset of \( \hat{C} \) and other elements enter \( C_s \) only when they are enumerated into \( \hat{C} \) or placed into \( C \) by \( \mathcal{R}_e \), which we describe below.  We also fix a countable collection of markers \( m_k \) shared across all the \( \mathcal{M}_e \) requirements whose position at stage \( s \) we denote by \( m_{k,s} \) with the intention (which we almost fulfill) of letting them come to rest on \( C - A \).  We describe the motion of these markers in terms of an \( e \)-state construction.  Instead of maximizing the \( e \)-state of our markers, which would guarantee that any c.e.\ set containing infinitely many elements from \( C - A \) contains almost all of them, we only maximize the \( e \)-state for c.e.\ sets threatening to contain \( \overline{C} \).  To this end, we employ the following twist on the notion of an \( e \)-state.

\begin{definition}\label{def:Cestate}
	The \( C \)-complementing \( e \)-state of \( x \), denoted \( \epsilon^C_e(x) \), is defined to be the \( \nu \in 2^{e-1}  \) such that 
	\begin{equation*}
		(\forall i < e) \left(\nu(i)= 1 \leftrightarrow W_{i} \supseteq (\overline{C} \cap [0,x]) \land x \in W_{i}\right)
	\end{equation*}
	The \( C \)-complementing \( e \)-state of \( x \) at stage \( s \), \( \epsilon^C_e(x,s) \) is defined to be the \( \nu \in 2^{e-1}  \) such that
	\begin{equation*}
			(\forall i < e) \left(\nu(i)= 1 \leftrightarrow W_{i,s} \supseteq (\overline{C_s} \cap [0,x]) \land x \in W_{i,s}\right)
	\end{equation*}
\end{definition}

At the start of the construction, we place \( m_k \) on the \( k \)-th element of \( C_0 \).  At the start of stage \( s +1 \) for every \( x \in C_{s+1} - C_{s} \), we pick the least \( k \) such that \( m_{k,s} > x \) and shift the markers after \( m_k \) down to their predecessors' location to fill the gap.  Note that since \( m_{k,s} > x \) if \( \epsilon^C_e(m_{k,s},s)(i) = 1 \) then since \( x \not\in C_{s} \) it follows that \( x \in W_{i,s}  \) and thus \( \epsilon^C_e(m_{k,s+1},s+1) \geq_L  \epsilon^C_e(m_{k,s},s)\) where \( \geq_L \) denotes the lexicographic order. 

After all the requirements with greater priority than \( \mathcal{M}_e \) have acted at stage \( s \), we search for the least \( k, k' \) with \( k' > k \geq e \) and 
\begin{equation} \epsilon^C_{e+1}(m_{k',s},s) >_L \epsilon^C_{e+1}(m_{k,s},s) .  
\end{equation}  We then move the marker \( m_k \) to the location occupied by \( m_{k'} \).  We  shift the later markers up accordingly and target the locations previously occupied by \( m_j \) for \( k \leq j < k' \) for entry into \( A \).    Note that some of these elements may be reserved by higher priority requirements described later.  In this  case, we still move the markers but respect the reservation.  Likewise, the motion of the markers is not affected if one of the elements targeted for \( A \) is permanently restrained by a negative requirement.

We inductively argue that each marker comes to rest.  Pick \( s \) large enough so that every \( m_{k'}\) for  \(k' < k \) has already come to rest on its final position and then select \( s' > s \) to maximize  \(\epsilon^C_{k+1}(m_{k,s'},s')  \) for \( s' \geq s \).  The marker \( m_k \) cannot be moved at this point unless new elements are enumerated into \( C \).  By the above remarks, this movement cannot decrease  \(\epsilon^C_{k+1}(m_{k,s'},s')  \). Eventually, no further elements of \( C \) are enumerated below \( m_k, \) and the marker comes to rest.  We let  \( m_{k,\infty} \) denote the location that  \( m_k \) settles upon permanently. 

We now argue that \( \mathcal{M}_e \) is satisfied if all but finitely many of the elements targeted  for \( A \) by \( \mathcal{M}_e \) eventually enter \( A \).  To see this, fix some  \( W_e \supseteq \overline{C} \) and note that the intersection of all sets \( W_i \) for \( i \leq e \) such that \( W_i \supseteq \overline{C} \) contains \( \overline{C} \).  By the simplicity of \( C \), this intersection must have an infinite intersection with \( C \). It follows that all but finitely many elements of \( C- A \) are contained in \( W_e \) and, indeed, all but finitely many elements in \( C - A \) have some \( C \)-complementing \( (e+1) \)-state \( \nu \).  Moreover, those elements targeted for \( A \) by \( \mathcal{M}_e \) form a computable set as, for large enough \( x \), \( \mathcal{M}_e \) targets \( x \) for \( A \) only if it has done so by the time we see a marker above \( x \) attain the \( C \)-complementing \( e \)-state \( \nu \).

Lastly, we must also guarantee that \( A \) is not \( (2n-1) \)-tardy by meeting the requirements \( \mathcal{R}_e \).  To that end, we ensure every \( (2n-1) \)-c.e.\ set  fails to provide a tardy approximation to \( \overline{A} \).  We fix a monotonically increasing computable function \( p(s) \) such that if our construction directs us to place \( x \) into \( A \) at stage \( s \) then \( 
B_{p(s)} \)  
for $B$ a c.e.\ set given in the the canonical enumeration.  Provided \( B=A \) then \( p(s) \) will be total.    By meeting the following requirements, we guarantee that no \( (2n-1) \)-c.e.\ approximation \( Y^{2n-1}_e \) to \( \overline{B} \) can work \( p(s) \) steps ahead of \( B \) if \( B=A \).  Since the requirements dealing with \( B=\REset{i} \) do not interact significantly with those requirements working against \( B=\REset{i'} \), we drop the subscript \( i \) from the statement of the requirement.

\begin{equation*}\tag*{$\mathcal{R}_e$}\label{eq:properly-Q_n-satisfied}
     (\exists x)\left[ 
			\begin{aligned}
		 		(\exists s)[ x  \in Y^{2n-1}_{e,s}\  \land\  &  x \in (A_{s} - A_{s-1})]  \\
		 		& \lor  \\
		 	Y^{2n-1}_e(x)  \neq\overline{A}(x) &
			\lor A \neq B
		\end{aligned} \right]
\end{equation*}

So long as we believe \( \mathcal{R}_e \) to be unsatisfied, our strategy will work to provide \( \mathcal{R}_e \) another ball \( x \) it alone controls and let \( \mathcal{R}_e \)  hold \( x \) out of \( A \) until we later see \( Y^{2n-1}_{e} \) change its mind to guess that  \( x \in \overline{B} \).  At this point, we target $x$ for entry into \( A \), and if $x$ gets into \( A \) before it leaves \( Y^{2n-1}_{e} \), we satisfy \( \mathcal{R}_e \).  If instead \( x \) leaves \( Y^{2n-1}_{e} \) before \( x \) enters \( A \), we simply cancel our targeting of \( x \) for \( A \), returning it back to wait at \( \mathcal{R}_e \) and again hold it out of \( A \) until the situation changes again.  Since \( Y^{2n-1}_{e} \) can change its mind one less time than $A$ can,  \( Y^{2n-1}_{e} \) eventually  must either fail to respond to these threats or \( x \) must enter the \( (2n-1) \)-th component of \( Y^{2n-1}_{e} \).  In the second case,  we may safely put \( x \) into \( A \) since if \( Y^{2n-1}_{e} \) really witnessed the tardiness of \( B \), we must have \( x \nin B \).  Note that we only place a new element into $A$ after placing it in $C$ if it is not already present there.  The real complexity in meeting this requirement is guaranteeing that we will eventually reserve some \( x \) large enough that  $x$  is not permanently restrained by higher priority negative requirements and $x$  has a large enough \( C \)-complementing \( e \)-state so that it is not  co-opted by any higher priority \( \mathcal{M}_e \).

During the construction, each \( \mathcal{R}_e \) will reserve a finite collection of intervals \( \{ \left[ l^e_i, h^e_i \right] \}_{i=1}^{n_e} \)  for its exclusive use such that if \( e \neq e' \) or \( i \neq i' \) then  \( \left[ l^e_i, h^e_i \right] \) and \( \left[ l^{e'}_{i'}, h^{e'}_{i'} \right] \) are disjoint.  Inside each interval, \( \mathcal{R}_e \) will maintain a marker \( r^e_i \) with position \( r^e_{i,s} \)  at stage \( s \) to indicate the currently active element in that interval.  Only \( \mathcal{R}_e \) or a higher priority \( \mathcal{M}_{e'} \) is allowed to target a member of \( \left[ l^e_i, h^e_i \right] \) for \( A \).   Whenever any marker \( r^e_i \) has been targeted for \( A \) but delayed by negative requirements, we act to reserve another interval for \( \mathcal{R}_e \).  If  \( \mathcal{R}_e \) already has \( j -1  \) intervals we select \( l^e_j \) to be the first element larger than every previously defined interval for any requirement.  We then select \( h^e_j \) to be the least number currently occupied by some marker \( m_k \) such that \( 2(l^e_j + 2^{e}+1) < h^e_j  \) and place \( r^e_j \) on \( h^e_j \).  This has the effect of guaranteeing that there are at least \( 2^{e} \) elements in \( \left[ l^e_j, h^e_j \right] \) that will not be enumerated into \( \hat{C} \), leaving  \( \mathcal{R}_e \) complete control over placing these elements into \( C \).      

If at any point we observe that the first clause in \eqref{eq:properly-Q_n-satisfied} has been satisfied, we mark \( \mathcal{R}_e \) satisfied and take no more action on its behalf.  Otherwise, we act only if some \( r^e_j \) occupies \( x \) and either \( x \) is not currently targeted to enter \( A \) but \( x \in Y^{2n-1}_{e,s}  \) or \( x \) is currently targeted to enter \( A \) but \( x \not\in Y^{2n-1}_{e,s} \).  In the former case, we target \( x \) for entry into \( A \) and in the latter case, we cancel our targeting of \( x \) for \( A \) (placing \( x \) into those \( X^{2n}_{e} \) being built below \( \mathcal{R}_e \) ).  If at some stage \( s \), element \( r^e_{i,s} \) is targeted for \( A \) by a higher priority \( \mathcal{M}_{e'} \), then set \( r^e_{i,s+1} \) to the largest \( x < r^e_{i,s} \) (which must be in the reserved interval) with \( x \not\in C_s \) and enumerate \( x \) into \( C \).

We argue that each \( \mathcal{R}_e   \) only reserves finitely many elements and is eventually satisfied.  Note that we only move \( r^e_i \) at stage \( s \) if some element \( y > r^e_{i,s} \) increases its approximate \( C \)-complementing \( (e+1) \)-state above that of \( r^e_{i,s} \).  By enumerating \( r^e_{i,s+1} \) into \( C \), we cause the marker \( m_k \) occupying the least \( y' > r^e_{i,s+1} \) to be shifted down to \( r^e_{i,s+1} \).  By our remark in the discussion of \( \mathcal{M}_e \),  we know that \( \epsilon^{C}_{e+1}(r^e_{i,s+1}, s+1 ) \geq_L \epsilon^{C}_{e+1}(y, s ) \) when we move \( r^e_i \), as the new location of \( r^e_i \) must have already entered any c.e.\ set contributing to \( \epsilon^{C}_{e+1}(y, s ) \).  Combining these inequalities, we see that  \( \epsilon^{C}_{e+1}(r^e_{i,s+1}, s+1 ) >_L \epsilon^{C}_{e+1}(r^e_{i,s}, s ) \).  As there are only \( 2^e \) many \( C \)-complementing \( (e+1)  \)-states, we can move \( r^e_i \) at most \( 2^e -1 \) times. By choice of \( h^e_i \), we know that each time we can find some element in \( [l^e_i, h^e_i] \) not yet in \( C_s \).  Hence, \( r^e_i \) eventually occupies a location that is not stolen by a higher priority majorness requirement.  Now, if \( \mathcal{R}_e  \) only reserves finitely many intervals, it is satisfied, so assume it reserves infinitely many intervals.  In this case, let \( [l^e_i, h^e_i] \) be an interval with \( l^e_i \) so large that no element in this interval is permanently restrained by any \( \mathcal{N}_{e'}\) for \( e' \leq e \), and let \( x \) be the location \( r^e_i \) settles upon.  But now \( x \) will eventually settle down into a victory against \( Y^{2n-1}_e \) as described above and no more intervals will be reserved for \( \mathcal{R}_e \).


This completes the construction. We now need to verify that  it has the claimed properties.  If \( \dom \varphi_e \) is infinite then eventually every element  not in \( A \) must settle down into \( X^{2n}_e \) or one of the finitely many negative requirements  \( \mathcal{N}_{e'} \) with \( e' < e \) and \( \dom \varphi_e \) finite.  Thus, after joining these finitely many trapped balls to the first odd component of \( X^{2n}_e \) they have not yet entered we can assume that \( X^{2n}_e \) contains \( \overline{A} \).  Moreover we can make this finite adjustment without disrupting the property that balls enter the earlier components of the \( 2n \)-c.e.\ set before the later ones and only enter \( X^{2n}_{e_2} \) after \( C \).   Conversely, \( X^{2n}_e \) is contained in \( \overline{A} \) union the set of elements placed into \( A \) by requirements of the form \( \mathcal{R}_{e'} \) or  \( \mathcal{M}_{e'} \) for \( e' < e \) which in the former case is a finite set and the latter a computable set.  Thus, \( X^{2n}_e \) is contained in \( \overline{A} \) union a computable subset \( K \) of \( A \) so we can fix \( X^{2n}_e \) to be equal to \( \overline{A} \) by simply intersecting \( \overline{K} \) with every positive (odd) component of \( X^{2n}_e \).  Since we do not alter the negative components of \( X^{2n}_e \), we do not slow down any elements from leaving \( X^{2n}_e \), and so retain the required tardiness property.  But, by taking elements out of the odd components but not the even ones we may now violate \eqref{E:A-and-C:nested} and \eqref{E:A-and-C:nicely-nested}.  

However, the need to adjust \( X^{2n}_e \) after the construction is really only a consequence of our decision to cast the construction as a pinball argument for ease of presentation rather than a \( \Pi^{0}_2 \) tree construction.  By performing this construction in the same fashion as that in \ref{T:3tardy}, our ad hoc modification of  \( X^{2n}_e \) becomes unnecessary as negative requirements \( \gamma \) can simply delay adding balls to the components of \( X^{2n}_e \) until every higher priority requirement \( \mathcal{R}_e \) that \( \gamma \) guesses will act infinitely often believes it will not need to add that ball to \( X^{2n}_e \).  Whenever a requirement \( \mathcal{R}_e \) that \( \gamma \) believes only needs to act finitely many times act \( \gamma \) can simply reset its construction of \( X^{2n}_e \) and begin from scratch.  Understood in terms of the tree construction the reservation of balls by \( \mathcal{R}_e \) acting at \( \gamma \) simply becomes the constraint that any nodes above or to the right of \( \gamma \) cannot pick these elements as new balls.  

\end{proof}

We can prove similarly a version of Lemma \ref{lem:major-2n-tardy} for the odd case.

\begin{lemma}\label{lem:major-2n+1-tardy}
	For every \( n \geq 1 \), there is a properly \( 2n+1 \)-tardy set \( A \), a c.e.\ set $Z$ disjoint from $A$, so that \( \hat{A}= A \sqcup Z \)  satisfies the conditions of Lemma \ref{lem:major-2n-tardy}.  So in particular, \( \hat{A} \) is \( 2n \)-tardy and satisfies all the demands on the enumeration order.
\end{lemma}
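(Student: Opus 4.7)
The plan is to adapt the proof of Lemma \ref{lem:major-2n-tardy}, simultaneously constructing $\hat{A}$, $Z$, and $C$ with $Z$ c.e., $A = \hat{A}\setminus Z$, and $\hat{A}$ satisfying all the conclusions of Lemma \ref{lem:major-2n-tardy}.  The $\mathcal{N}_e$ and $\mathcal{M}_e$ requirements are carried over essentially verbatim, but now referring to $\hat{A}$ in place of $A$: the $\mathcal{N}_e$'s build $2n$-c.e.\ approximations $X^{2n}_e$ to $\overline{\hat{A}}$ with the required enumeration order, and the $\mathcal{M}_e$'s ensure $C\supset_m \hat{A}$.  Since $Z\cap A=\emptyset$, adjoining $Z$ as an extra positive component converts any such $X^{2n}_e$ into a $(2n{+}1)$-c.e.\ approximation $X^{2n}_e+Z$ to $\bar{A}$ inheriting the tardiness bound, exactly as in the proof of Lemma \ref{lem:Q_2n1imptardy}.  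Consequently $A$ is automatically $(2n{+}1)$-tardy, and the only new work is to force $A$ to fail $2n$-tardiness.

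For this, the $\mathcal{R}_e$ requirements are replaced by $\mathcal{R}^+_e$ that attack $2n$-c.e.\ approximations $Y^{2n}_e$ of $\bar{A}$ rather than $(2n{-}1)$-c.e.\ approximations of $\overline{\hat{A}}$.  The strategy runs as before: reserve a fresh witness $x$ using the same interval-reservation and $C$-complementing $e$-state bookkeeping, hold $x$ out of $\hat{A}$, and cycle $x$ in response to $Y^{2n}_e$.  When $x\in Y^{2n}_{e,s}$ we target $x$ for $A$ by releasing it onto the pinball machine to trickle through the $\mathcal{N}$'s below; if $Y^{2n}_e$ later ejects $x$ before it reaches the bottom we cancel the targeting and return $x$ to waiting.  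Because $Y^{2n}_e$ is $2n$-c.e., the membership of $x$ in $Y^{2n}_e$ can change only finitely often; the one extra cycle compared to the $(2n{-}1)$-c.e.\ case is absorbed by one more wait-and-cancel iteration.  A stable ``out'' outcome yields an immediate mismatch between $Y^{2n}_e$ and $\bar{A}$ (we never added $x$ to $\hat{A}$), while a stable ``in'' outcome lets $x$ eventually trickle into $A$ at some stage $t$, producing some $s$ with $x\in Y^{2n}_{e,s}$ and $x\in A_{p(s)}$, the desired tardiness violation.

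The role of $Z$ is to supply a safe dumping ground for balls that higher-priority action detaches from their intended target: such balls are routed into $Z$ (hence into $\hat{A}$) so they can be discarded without committing them to $A$.  The main obstacle is that $Z\subseteq\hat{A}$, so any enumeration into $Z$ must respect the delays imposed by the $\mathcal{N}_e$'s just as enumerations into $A$ do; a ball cannot be dumped into $Z$ without first trickling through the pinball pipeline.  This is handled exactly as in Lemma \ref{lem:major-2n-tardy}: dumped balls pass through the $\mathcal{N}$'s in the same way as balls headed for $A$, and the finite computable corrections introduced by the informal pinball description are absorbed into the $\Pi^0_2$ tree guessing when the construction is recast as a tree argument.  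With these modifications, the verifications of Lemma \ref{lem:major-2n-tardy} carry over with $\hat{A}$ in place of $A$, and the extra verification for the $\mathcal{R}^+_e$'s shows that $A$ is properly $(2n{+}1)$-tardy.
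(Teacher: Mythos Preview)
Your outline is right—run the Lemma \ref{lem:major-2n-tardy} construction for $\hat{A}$ and $C$, keep the $\mathcal{N}_e$ and $\mathcal{M}_e$ intact, and replace $\mathcal{R}_e$ by requirements attacking $2n$-c.e.\ sets $Y^{2n}_e$—but the role you assign to $Z$ misses the point, and the sentence ``the one extra cycle \ldots\ is absorbed by one more wait-and-cancel iteration'' does not go through.

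The obstruction is a counting problem against the $\mathcal{N}$-requirements, not against $Y$.  Each target/cancel pair for a witness $x$ consumes two components of every $X^{2n}_{e'}$ below $\mathcal{R}_e$: targeting drops $x$ into the next even component, cancelling places it in the next odd one.  Against a $(2n{-}1)$-c.e.\ opponent this budget is exactly right: after the $n$-th targeting $x$ sits in $X^{2n}_{e'_{2n}}$ and $Y^{2n-1}$ has no move left.  Against a $2n$-c.e.\ opponent, however, after your $n$-th targeting $Y^{2n}$ may still eject $x$ by enumerating it into its $2n$-th component.  At that moment $x$ already lies in $X^{2n}_{e'_{2n}}$ at every $\mathcal{N}_{e'}$ below, so it is irrevocably committed to $\hat{A}$; a further cancel would require a $(2n{+}1)$-st component and destroy the $2n$-c.e.\ presentation of $\overline{\hat{A}}$.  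Your stable-``out'' analysis (``we never added $x$ to $\hat{A}$'') is therefore false in exactly the critical case.

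The paper uses $Z$ precisely here, not as a dump for balls displaced by higher-priority action.  When $x$ reaches the root after its $n$-th targeting, one checks whether $x$ is still in $Y^{2n}_{e,s}$.  If so, $x$ is enumerated into $A$, giving the tardiness violation.  If not—so $x$ has entered the $2n$-th component of $Y^{2n}_e$ and is permanently out—$x$ is enumerated into $Z$ instead of $A$.  Then $x\in\hat{A}$, so every $X^{2n}_{e'}=\overline{\hat{A}}$ stays $2n$-c.e., yet $x\notin A$, so $x\in\bar{A}\setminus Y^{2n}_e$ and $Y^{2n}_e\neq\bar{A}$.  In other words, $Z$ supplies the single extra ``cancel'' that the $2n$-c.e.\ budget of the $X^{2n}_{e'}$ cannot, and this is the one place where $Z$ is essential.
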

\begin{proof}
	One could take a c.e.\ set \( \hat{A} \)  built to satisfy Lemma \ref{lem:major-2n-tardy} and split up the elements we enumerate into \( \hat{A} \) into exactly one of the bins \( A \) or \( Z \).  However, since we must also ensure that \( A \) is \textit{properly} \( 2n+1 \)-tardy it is preferable to dynamically build \( \hat{A} \) as above and only once we have made an irrevocable commitment to place \( x \) into \( \hat{A} \) do we decide whether to put \( x \) into \( A \) or \(  Z \).  If we put \( x \) into \( \hat{A} \) as the result of \( \mathcal{M}_e \), we place \( x \) into \( {A} \).  Then,  \( \mathcal{M}_e \) will be satisfied in the same manner as before.   We now focus on those balls  used by some requirement \( \mathcal{R}_e \).   Without loss of generality, we may assume these balls are not stolen by some higher priority \( \mathcal{M}_{e'} \) since we can always react to that event by handing out a new ball to \( \mathcal{M}_e \). As we argued above,  eventually \( \mathcal{M}_e \) will receive a ball that does not get stolen.

We also place into \( A \) any balls that were enumerated into \( \hat{A} \) by some \( \mathcal{R}_e \)  before entering the \( 2n \)-th component  \( X^{2n}_{e_{2n}} \) of any node.  These balls entered \( \hat{A} \)  to obtain an immediate victory by showing that either \( A \neq B \) or that \( x \) does not leave \( Y^{2n}_{e,s} \) soon enough before entering \( B \).  By placing these balls into \( A \), we ensure that either \( Y^{2n}_{e,s} \)  does not witness that \( B \) is \( 2n \)-tardy or \( A\neq B \).  This leaves only the case where \( x \) enters \( Y^{2n}_{e_{2n-1},s} \) and our construction of \( \hat{A} \) responds by starting \( x \) down towards the \( \hat{A} \) by placing it in the sets \( X^{2n}_{e_{2n}} \).  As far as the construction of \( \hat{A} \) is concerned, once \( x \) has entered \( X^{2n}_{e_{2n}} \), it must continue on into \( \hat{A} \) (modulo possible finite injury).  However, when our \( x \) reaches the root node we are not forced to place it into \( A \).  Instead, we check if \( x \) is still in \( Y^{2n}_{e,s} \).  If so, we place \( x \) into \( A \) for the immediate victory.  Alternatively if \( x \not\in Y^{2n}_{e,s}  \), i.e.,   \( x \in Y^{2n}_{e_{2n},s}  \), then either \( Y^{2n}_{e} \) fails to witness that \( B \) is \( 2n \)-tardy or \( x  \) enters \( B \) so we place \( x \) into \( Z \) rendering \( A = \hat{A} - Z \) properly \( 2n+1 \)-tardy.  

Since the only elements entering \( \hat{A} \) but not \( A \) pass through all the intermediate components \( X^{2n}_{e_{k}} \) in order at the negative requirements below \( \mathcal{R}_e \), the ordering properties trivially hold at these nodes.  At the remaining nodes, \( x \) may have become stuck in \( X^{2n}_{j_{2}} \) or some other component.  However, this concern is easily addressed by taking any balls we place into \( Z \) and slowly running them through the components of \( X^{2n}_{j} \) in order. Then, we can use a slower enumeration of \( Z \) to add a final component to \( X^{2n}_{j} \), making a \( 2n+1 \)-c.e.\ set that satisfies the hypotheses of the theorem.
\end{proof}

We now prove Theorem \ref{T:Q_nsatisfied}, i.e., we show that the above sets $A$ in Lemmas \ref {lem:major-2n-tardy}
and  \ref{lem:major-2n+1-tardy} satisfy \( Q_m(A) \).  Since the definition of \(Q_{2n+1}(A)\) is simply 
\mbox{\((\exists Z \subseteq \bar{A})Q_{2n}(A \cup Z)\)} and \( \hat{A}= A \sqcup Z \) satisfies the conditions of Lemma \ref{lem:major-2n-tardy}, we simply need to show that \(Q_{2n} (A)\) holds for $A$ as in Lemma \ref{lem:major-2n-tardy}.

\subsection{Verifying Satisfaction}

To show that \( A \) satisfies \( Q_{2n}(A) \) via the \( C \) built above we fix some arbitrary \( \vec{B} = (B_1, \ldots, B_n) \) and will construct \( \vec{D}=(D_1, \ldots, D_n) \) in response.  Furthermore for every \( S_j \sqsubset C \) we must also  describe the sets \( \vec{T^j}=(T^j_1, \ldots, T^j_n) \) we will play in response.  To gain better control over our construction we fix an effective enumeration  \( \{(S_j, \hat{S_j}) \mid j \in \omega \}  \) containing all disjoint pairs of c.e.\ subsets of \( C \) requiring, by way of the Slowdown Lemma \cite{SoareBook} that the indices we list satisfy \( S_j \cup \hat{S_j} = C \searrow (S_j \cup \hat{S_j}) \) in the canonical stagewise enumeration of c.e.\ sets.

We use the tardiness of \( A \) to force elements in \( (A\cup T_{i+1}) \cap(S_j\cap T_i) \) (where \( T_{n+1} = \emptyset \)) into  \( B_i \) whenever \eqref{E:Q_2nif} is satisfied.  Since \eqref{E:Q_2nif} forces \( B_i \) to copy \( D_i \) on \( S_j - A \), this occurs automatically for \( x \in T_{i+1} - A \).  To deal with \( x \in A \) we note that if \eqref{E:Q_2nif} is satisfied we can computably measure how long it takes elements in \( D_i \) and \( S_j \) to appear in either \( A \) or \( B_i \).  In this case, let \( p_j(s) \) be the amount of time it takes for elements entering \( C \) before stage \( s \) that will eventually enter \( S_j \cap D_i \) to enter \( B_i \) or \( A \).  By tardiness, there exists \( X_e^{2n} = \overline{A} \) working faster than this delay $p_j$.  
The trick is to then use  \( X_e^{2n}\) to build \( D_i \) as a version of \( X_{e_{2i}} \) and \( T^{j}_{i} \) as a version of \( X_{e_{2i-1}} \) so that any \( x \) in \( A \cap S_j \cap T_i \) has to first enter either \( B_i \) or \( A \) before entering \( A \) and therefore must enter \( B_i \).

To show \( Q_{2n}(A) \) holds, we use the \( 2n \)-c.e.\ sets \( X^{2n}_{j} \) demonstrating that \( A \) is \( 2n \)-tardy with respect to $p_j$ to construct \( \vec{D} \) and \( \vec{T} \).  The key point  is that, for those elements we care about, \( D_i \) will behave like a modified version of some \( X^{2n}_{e_{2i}} \) and \( T_i \) will behave like a modified \( X^{2n}_{e_{2i-1}} \).  Thus, \( \vec{T} \) can be thought of as elements that may stay out of \( A \) and \( \vec{D} \) as elements that may enter \( A \).  Now, pretending we get to play both \( \vec{T} \) and \( \vec{D} \) in response to the $n$-c.e.\ set  $X_e^{2n}$ witnessing tardiness with respect to $p_j$, we would proceed as follows.

 
To overcome our inability to build \( \vec{D} \) in response to the choice of \(S_ j\cup \hat{S}_j \) we aim to somehow split up the construction of \( D_i \) so that on \( T^j_1 \cap S_j \) the construction responded to \( S_j \).  Unfortunately these sets are not disjoint but the approach remains valid as they make compatible demands.   Unfortunately this strategy only works when \( \vec{D}, \vec{B}, S_j \) really satisfy \eqref{E:Q_2nif} and and even then we must locate the correct \( 2n \)-c.e.\ set \( X^{2n}_e \) with respect to $p_j$.  We manage this complexity using a \( \Pi^0_2 \) guessing procedure at the true path \( f(k) \) described below.  Recall that $A$ and $C$ are fixed from  Lemma \ref{lem:major-2n-tardy}, and \( \vec{B} \) is fixed and arbitrary.  We will formally define $p_{\alpha}$ later.  
 
	\begin{align*}
		f(0) &= j_0 \text{ where } C = W_{j_0} \\
				f(2k+1) &= (\mu e)(X^{2n}_e \text{ satisfies Lemma \ref{lem:major-2n-tardy} with respect to } p_{f\restriction{2k+1}} ) 
\\
		f(2k+2) &= (\mu j > f(2k))\left[  (S_j \sqcup \hat{S_j}) = C \land \text{ \eqref{E:Q_2nif} holds for \( \vec{D} \).} \right] 
	\end{align*}

We adopt the convention that \( \alpha, \beta, \gamma \) only range over strings of even length, \( X^{\alpha} \) refers to \( X^{2n}_{e_n} \), \( S_\alpha \) to \( S_{j_n} \) and denote the sets \( \vec{T} \) built in response to \( S_\alpha \) by \( \vec{T^{\alpha}} \) for \( \alpha = (j_0,e_0, j_1, e_1, \ldots, j_n, e_n) \).  We adopt a standard \( \Pi^0_2 \) approximation argument so that \( \alpha \subset f \) iff \( \alpha \) is the \( <_L \) string of that length satisfying \( (\exists^{\infty} s)\ [ f_s \supseteq \alpha] \).   For every \( x \), we keep track of \( \Gamma(x,s) \), the leftmost substring of \( f_t \) of length \( x \) for \( t \in [x,s] \) observing that if \( \alpha \subseteq f \) then for all but finitely many \( x \) we have \( \Gamma(x,\infty) \supseteq \alpha \).
Before we specify the construction of the function \( p_\alpha \), we detail the construction of \( \vec{D} \) and \( \vec{T^{\alpha}} \).

\begin{equation*}
	\begin{split}
		x \in T^\alpha_{1,s+1} \leftrightarrow & x \in T^\alpha_{1,s} \\
			& \lor  \left[ s+1 \geq x   \land  \Gamma(x,s) \supseteq \alpha  \land  x \in X^{\alpha}_{1,s} - C_s \right]
	\end{split}	 
\end{equation*}

\begin{subequations}
\begin{align}\label{E:def-D-and-Ti}
	s_x &= (\mu t)(x \in C_t) \\	
	\begin{split}
	\alpha_x &= (\mu \beta \subseteq \Gamma(x,s_x))(\exists t)\\
					 &\left[ [x \in T^{\beta}_{1,s_x} \cap S_{\beta,t}]  \land (\forall \gamma\subset\beta)[x\not\in T_{1, s_x}^\gamma  \lor x\in\hat{S}_{\gamma, t}]  \right] 
	\end{split}\\
	x & \in T^{\alpha}_{i+1} \leftrightarrow x \in X^{\alpha_x}_{2i+1,s} \cap T^{\alpha}_{1,s}\label{E:def-D-and-Ti:Ti}\\
	x & \in D_{i}  \leftrightarrow x \in X^{\alpha_x}_{2i}\label{E:def-D-and-Ti:Di}
\end{align}  
\end{subequations}

Note that if \( x \) is not in \( T^{\alpha}_{1,s_x} \),  it is not in \( T^{\alpha}_1 \).  Additionally, given \( x \in C_s \) for \( \alpha \) on the true path, it is a computable question whether \( \alpha_x \subseteq \alpha \) since $S_j\cup \hat{S}_j$ is actually a split of $C$ for $\beta\subseteq\alpha$.  We now define \( p_\alpha(s) \).

For every \( x, s \) define \( p_\alpha(i,x,s)=s \) if \( x \not\in C_s\cup X^{\alpha_x}_{2i,s+1} \lor \alpha_x \nsubseteq \alpha  \) otherwise set

\begin{align*}
	p_\alpha(i,x,s) &= (\mu s' \geq s)(\alpha^{-}\subseteq f_s \land x \in B_{k,s'} \cup A_{s'} )\\
\shortintertext{Then define}
	p_\alpha(s) &= 1+\max_{\substack{x \leq s \\ i \leq n}} p_\alpha(i,x,s)  
\end{align*}

\begin{lemma}\label{lem:delay-is-good}
	If \( \alpha^{-} \) is on the true path then \( p_{\alpha}(s)= p_{\alpha^{-}}(s)\) is a total function.
\end{lemma}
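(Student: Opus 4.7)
The plan is to prove the two halves of the lemma together: the equality \(p_\alpha(s) = p_{\alpha^-}(s)\) and the totality of that common function. Since the outer expression \(p_\alpha(s) = 1 + \max_{x\leq s,\, i\leq n} p_\alpha(i,x,s)\) is a finite maximum, both assertions reduce to statements about the inner values \(p_\alpha(i,x,s)\).

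The equality is largely a bookkeeping argument. The two definitions differ only in the containment test \(\alpha_x \subseteq \alpha\) versus \(\alpha_x \subseteq \alpha^-\) and in the search predicate \(\alpha^- \subseteq f_{s'}\) versus \(\alpha^{--} \subseteq f_{s'}\). If \(\alpha_x \nsubseteq \alpha^-\) (hence \(\nsubseteq \alpha\)) both functions return \(s\). The case \(\alpha_x = \alpha\), which is the only way the containments could disagree, contributes only finitely often when \(\alpha \nsubseteq f\), since then \(\Gamma(x,s_x) \supseteq \alpha\) for only finitely many \(x\) and such terms get absorbed into the finite maximum. For the search predicates, because \(\alpha^-\) lies on the true path, the cofinal sets of \(s'\) satisfying \(\alpha^- \subseteq f_{s'}\) and \(\alpha^{--} \subseteq f_{s'}\) share the same \(\mu s'\)-witnesses once intersected with the clause \(x \in B_i \cup A_{s'}\).

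The heart of the argument is totality. Fix \((i,x,s)\) with \(x \in C_s \cap X^{\alpha_x}_{2i,s+1}\) and \(\alpha_x \subseteq \alpha^-\). The true-path hypothesis immediately yields infinitely many \(s'\) with \(\alpha^- \subseteq f_{s'}\), so the only issue is whether eventually \(x \in B_i \cup A_{s'}\). By \eqref{E:def-D-and-Ti:Di}, membership \(x \in X^{\alpha_x}_{2i}\) records \(x \in D_i\), and by the minimality clause in the definition of \(\alpha_x\), \(x \in T^{\alpha_x}_1 \cap S_{\alpha_x}\). Because \(\alpha_x\) is a prefix of the true-path string \(\alpha^-\), I would invoke \eqref{E:Q_2nif} at \(\alpha_x\), forcing \(D_i \cap (S_{\alpha_x} - A) = B_i \cap (S_{\alpha_x} - A)\), so \(x\) must eventually enter \(B_i\) or \(A\) and furnish the required witness.

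The main obstacle is checking that \eqref{E:Q_2nif} is genuinely available at \(\alpha_x\): the true-path construction imposes this condition only at the even-length levels chosen by the \(\mu j\) clause in \(f(2k+2)\), so I must verify that every prefix of \(\alpha^-\) of the form \(\alpha_x\) inherits both the splitting \(S_{\alpha_x} \sqcup \hat{S}_{\alpha_x} = C\) and the condition \eqref{E:Q_2nif}. I would close this gap by induction along the true path: the minimality in the definition of \(\alpha_x\) selects the first level at which \(x\) is witnessed to sit in an approved split of \(C\), and the inductive construction of \(f\) guarantees that each true-path ancestor satisfies the corresponding \eqref{E:Q_2nif}. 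With totality of \(p_{\alpha^-}\) in hand, the equality feeds back as a routine consequence since the two formulas compute the same \(\mu s'\) witness modulo a finite exceptional set of \(x\).
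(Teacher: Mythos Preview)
Your totality argument is essentially the paper's proof: from \( x \in X^{\alpha_x}_{2i} \) and the definition of \( \alpha_x \) one gets \( x \in D_i \cap S_{\alpha_x} \), and since \( \alpha_x \) lies on the true path, \eqref{E:Q_2nif} forces \( x \in B_i \cup A \). Two points, however, deserve comment.

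First, you have over-read the statement. The paper does not treat the equality \( p_\alpha(s) = p_{\alpha^{-}}(s) \) as a claim to be proved; its proof addresses only totality, and the note immediately after the lemma explains that the point is simply that the definition of \( p_\alpha \) depends only on \( \alpha^{-} \) (so that there is no circularity in choosing the tardiness witness at level \( \lh{\alpha} \)). Your attempted proof of the equality is accordingly unnecessary, and in fact does not work as written: saying the case \( \alpha_x = \alpha \) ``contributes only finitely often'' and is ``absorbed into the finite maximum'' does not establish equality of the two functions at every \( s \), and your claim that the sets \( \{s' : \alpha^{-} \subseteq f_{s'}\} \) and \( \{s' : \alpha^{--} \subseteq f_{s'}\} \) yield the same \( \mu s' \)-witness is false in general, since \( \alpha^{--} \) is visited at least as often as \( \alpha^{-} \).

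Second, your worry about whether \eqref{E:Q_2nif} is ``genuinely available at \( \alpha_x \)'' is misplaced. No induction along the true path is needed: the definition of \( f(2k+2) \) explicitly selects only those \( j \) for which \( (S_j \sqcup \hat{S}_j) = C \) and \eqref{E:Q_2nif} holds for \( \vec{D} \). So if \( \alpha_x \subseteq \alpha^{-} \subset f \), then \( S_{\alpha_x} \) satisfies \eqref{E:Q_2nif} immediately by definition of \( f \). The paper's proof invokes this in a single sentence.
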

\begin{proof}  It suffices to  show that $p_{\alpha}(i, x, s)$ is defined for all $x\le s$ and $i\le n$.  If $x \not\in C_s\cup X^{\alpha_x}_{2i,s+1} \lor \alpha_x \nsubseteq \alpha$, then this is clear.  
By definition of \( D_i \) and \( \alpha_x \), if \( x \in X^{\alpha_x}_{2i} \) then \( x \in D_i \cap S_{\alpha_x} \).  Thus, if \( p_\alpha(i,x,s) \neq s \) then as \( \alpha^{-} \) is on the true path \eqref{E:Q_2nif} holds so \( x \in A \) or \( x \in B_i \).  Hence,  $p_{\alpha}(i, x, s)$ is defined by the second clause in its definition.  
\end{proof}

Note the importance of defining \( p_{\alpha} \) using only \( \alpha^{-} \) is to avoid any circularity in selecting the \( 2n \)-c.e.\ set witnessing tardiness for \( p_{\alpha} \) at node \( \alpha \).

\begin{lemma}\label{L:alpha_x}
If $\alpha$ is on the true path, then for all but finitely many $x$, 
$$x\in T_1^\alpha\cap S_\alpha \ \rightarrow\  \alpha_x\subseteq \alpha.$$
\end{lemma}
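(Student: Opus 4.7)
The plan is to exhibit, for each sufficiently large $x \in T^\alpha_1 \cap S_\alpha$, an even-length prefix $\beta_0 \subseteq \alpha$ that directly satisfies the defining condition for $\alpha_x$. Minimality in the definition of $\alpha_x$ then forces $\alpha_x \subseteq \beta_0 \subseteq \alpha$. The set of ``bad'' $x$'s for which this procedure fails will be confined to those $x$ below a fixed finite stage threshold.

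The first step is to show $\Gamma(x, s_x) \supseteq \alpha$ for all sufficiently large $x \in T^\alpha_1$. Since $\alpha \subset f$, only finitely many stages $t$ can satisfy $f_t \restriction |\alpha| <_L \alpha$; let $t_0$ be an upper bound on all of them. For $x > t_0$ and any $s$, no $t \in [x, s]$ can pull $\Gamma(x, s) \restriction |\alpha|$ strictly to the left of $\alpha$, giving $\Gamma(x, s) \restriction |\alpha| \geq_L \alpha$. Conversely, the defining clause of $T^\alpha_1$ requires $\Gamma(x, s) \supseteq \alpha$ at the stage $s+1$ when $x$ enters $T^\alpha_1$; since $x$ enters $T^\alpha_1$ strictly before $C$, the corresponding $t \in [x, s] \subseteq [x, s_x]$ with $f_t \supseteq \alpha$ forces $\Gamma(x, s_x) \restriction |\alpha| \leq_L \alpha$. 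Combining the two inequalities gives $\Gamma(x, s_x) \supseteq \alpha$.

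Now fix such an $x \in T^\alpha_1 \cap S_\alpha$ and let $\beta_0$ be the shortest even-length prefix of $\alpha$ satisfying both $x \in T^{\beta_0}_{1, s_x}$ and $x \in S_{\beta_0}$. Such a $\beta_0$ exists because $\alpha$ itself qualifies: $x$ entered $T^\alpha_1$ no later than $s_x$, and $x \in S_\alpha$ by hypothesis. For each even-length $\gamma \subset \beta_0$, minimality forces either $x \not\in T^\gamma_{1, s_x}$ or $x \not\in S_\gamma$; in the second case, $x \in S_\alpha \subseteq C$ combined with $S_\gamma \sqcup \hat{S}_\gamma = C$ yields $x \in \hat{S}_\gamma$. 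Choose $t$ large enough that $x \in S_{\beta_0, t}$ and $x \in \hat{S}_{\gamma, t}$ for each of the finitely many $\gamma \subset \beta_0$ with $x \in \hat{S}_\gamma$. The pair $(\beta_0, t)$ then satisfies the defining condition of $\alpha_x$, and since $\beta_0 \subseteq \alpha \subseteq \Gamma(x, s_x)$, minimality yields $\alpha_x \subseteq \beta_0 \subseteq \alpha$.

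The exceptional finite set is contained in $\{x : x \leq t_0\}$. The main obstacle I anticipate is the inclusion $\Gamma(x, s_x) \supseteq \alpha$: $\Gamma(x, s)$ may drift leftward between the stage $x$ enters $T^\alpha_1$ and the stage $s_x$ when $x$ enters $C$, so one must exploit the fact that past $t_0$ the true-path approximation cannot again stray left of $\alpha$. Once this is in hand, the remainder of the argument reduces to straightforward bookkeeping about when witnesses appear in the stagewise enumerations of $S_{\beta_0}$ and the relevant $\hat{S}_\gamma$.
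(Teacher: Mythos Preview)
Your proof is correct and follows essentially the same approach as the paper's: both first establish $\Gamma(x, s_x) \supseteq \alpha$ for large $x$ via the same two-sided bound (no $f_t$ left of $\alpha$ past a fixed stage, combined with the $T^\alpha_1$ entry condition at some $s < s_x$), and then conclude $\alpha_x \subseteq \alpha$. The only cosmetic difference is in the second step: the paper observes that $\alpha_x$ and $\alpha$ are comparable as prefixes of $\Gamma(x, s_x)$ and rules out $\alpha_x \supsetneq \alpha$ in one line by instantiating $\gamma = \alpha$ in the second conjunct of the defining property of $\alpha_x$, whereas you explicitly construct a witness $\beta_0 \subseteq \alpha$ satisfying the full defining condition and invoke minimality.
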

\begin{proof}
For any $x\in T_1^\alpha$, there exists an $s$ such that $\Gamma(x, s)\supseteq\alpha$ with \mbox{$x\le s\le s_x$.}   Let $s'$ be a stage such that for all $t\ge s'$, $f_s$ is not left of $\alpha$.  Then, for each $x>s'$, we have $s_x\ge s\ge x>s'$ since $x\not\in C_{s'}$ by properties of the enumeration of c.e.\ sets.  Hence, for $x>s'$, we have that $\Gamma(x, s_x)$ (and then $\alpha_x$) is not to the left of $\alpha$.  Fix $x>s'$ such that $x\in T_1^\alpha\cap S_\alpha$.  Since $\Gamma(x, s)\supseteq\alpha$ for $s_x\ge s\ge x>s'$ and $\alpha_x$ is not to the left of $\alpha$, we have that $\alpha_x$ must either extend $\alpha$ or be a substring of $\alpha$.  Since $x\in T_1^\alpha\cap S_\alpha$, we have that $\alpha_x\not\supset \alpha$ by the second conjunct in the definition of $\alpha_x$.   
Thus, $\alpha_x\subseteq\alpha$  for all $x>s'$ such that $x\in T_1^\alpha\cap S_\alpha$ as desired.

\end{proof}
\begin{lemma}\label{lem:delay-into-A}
	If \( \alpha \) is on the true path, then \( A \cap S_\alpha \cap T^{\alpha}_i  \subseteq^* B_i \cap S_\alpha \cap T^{\alpha}_i \) 
\end{lemma}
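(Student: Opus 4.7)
The plan is to combine the fact that $\alpha \subset f$ with the $2n$-tardiness of $A$ witnessed by $X^{\alpha_x}$ to force elements of $A \cap S_\alpha \cap T^\alpha_i$ into $B_i$. The intuition is exactly the ``trick'' described earlier: once $x$ enters $X^{\alpha_x}_{2i}$, the tardiness witness delays $x$'s entry into $A$ for at least $p(s)$ many stages, and by construction this delay is chosen large enough that $x$ must enter $B_i$ before it enters $A$.

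First, I would fix $\alpha$ on the true path and $i$, and take $x \in A \cap S_\alpha \cap T^\alpha_i$ sufficiently large that Lemma \ref{L:alpha_x} applies, so we may assume $\alpha_x \subseteq \alpha$. By the first conjunct in the definition of $\alpha_x$ (given in \eqref{E:def-D-and-Ti}), we additionally have $x \in S_{\alpha_x} \cap T^{\alpha_x}_{1, s_x}$, and the defining recurrence for $T^{\alpha_x}_1$ then guarantees that $x$ appeared in $X^{\alpha_x}_1$ at some stage before entering $C$. In particular, $x \in X^{\alpha_x}_1$ as a set.

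Second, I would show $x \in X^{\alpha_x}_{2i}$, so that $x \in D_i$ by \eqref{E:def-D-and-Ti:Di}. Since $X^{\alpha_x} = \overline{A}$ and $x \in A \cap X^{\alpha_x}_1$, the maximal index $j$ with $x \in X^{\alpha_x}_j$ must be even, say $j = 2k$ for some $k \geq 1$. For $i = 1$, nesting \eqref{E:A-and-C:nested} immediately yields $x \in X^{\alpha_x}_{2k} \subseteq X^{\alpha_x}_2$. For $i \geq 2$, membership $x \in T^\alpha_i$ demands $x \in X^{\alpha_x}_{2i-1}$ by \eqref{E:def-D-and-Ti:Ti}; combined with the maximality of $2k$, this forces $i \leq k$, and again $x \in X^{\alpha_x}_{2k} \subseteq X^{\alpha_x}_{2i}$ by nesting. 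Then let $s$ be the stage when $x$ first enters $X^{\alpha_x}_{2i}$. By \eqref{E:A-and-C:nicely-nested}, at stage $s-1$ we have $x \in X^{\alpha_x}_{2i-1,s-1} \setminus X^{\alpha_x}_{2i,s-1}$, so $x$ belongs to the $2n$-c.e.\ approximation $X^{\alpha_x}_{s-1}$. Applying the tardiness of $X^{\alpha_x}$ with respect to the appropriate delay function $p$, I obtain $x \notin A_{p(s-1)}$; by construction this delay dominates $p(i,x,s-1)$, which (via the second clause of the definition, since $x \in C_{s-1} \cap X^{\alpha_x}_{2i, s}$ and $\alpha_x \subseteq \alpha$) equals the least $s' \geq s-1$ with $x \in A_{s'} \cup B_{i,s'}$. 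Since $x \notin A$ by stage $p(s-1)$, we conclude $x \in B_i$, as required.

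The main obstacle is unifying the cases $i = 1$ and $i \geq 2$. The definition of $T^\alpha_1$ refers to $X^\alpha_1$ rather than $X^{\alpha_x}_1$, so the link to the tardiness witness $X^{\alpha_x}$ is not immediate when $\alpha_x \subsetneq \alpha$. This is resolved in step one by using the minimality in the definition of $\alpha_x$ to obtain $x \in T^{\alpha_x}_{1, s_x}$, which places $x \in X^{\alpha_x}_1$ uniformly. A secondary technicality, to be checked carefully in the write-up, is that the conditions enabling the nontrivial clause of $p$ (namely $x \in C_{s-1} \cap X^{\alpha_x}_{2i,s}$ together with the prefix condition on $\alpha_x$) really do hold at stage $s-1$, which follows from the construction of $X^{\alpha_x}$ satisfying \eqref{E:A-and-C:C-first}.
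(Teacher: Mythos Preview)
Your proposal is correct and follows essentially the same approach as the paper's proof. The paper argues identically: it uses Lemma~\ref{L:alpha_x} to get \( \alpha_x \subseteq \alpha \), extracts \( x \in T^{\alpha_x}_{1} \) (hence \( x \in X^{\alpha_x}_1 \)) from the definition of \( \alpha_x \), uses \( x \in T^{\alpha}_i \) together with \( x \in A \) and nesting to obtain \( x \in X^{\alpha_x}_{2i} \), and then applies the tardiness of \( X^{\alpha_x} \) against \( p_{\alpha_x} \) at the stage just before \( x \) enters \( X^{\alpha_x}_{2i} \) to conclude \( x \in B_i \); the only cosmetic differences are that the paper indexes the entry stage as \( s+1 \) rather than \( s \) and explicitly invokes Lemma~\ref{lem:delay-is-good} for the totality of \( p_{\alpha_x} \), which you should also cite since the tardiness witness \( X^{\alpha_x} \) is only guaranteed to exist for total \( p_{\alpha_x} \).
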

\begin{proof}
By Lemma \ref{L:alpha_x}, for all but finitely many $x\in T_1^\alpha\cap S_\alpha$, we have that $\alpha_x\subseteq\alpha$.   Take such an $x$ in $ A \cap S_\alpha \cap T^{\alpha}_i$ (Note that $ A \cap S_\alpha \cap T^{\alpha}_i\subseteq T_1^\alpha\cap S_\alpha $.) By definition, \( x \in S_{\alpha_x} \cap T^{\alpha_x}_{1} \).  Since \( x \in T^{\alpha}_i \), it follows that \( x \in X^{\alpha_x}_{2i-1} \), and since \( x \in A \), we also have \( x \in X^{\alpha_x}_{2i} \).  By \eqref{E:A-and-C:nicely-nested}, if \( s+1 \) is the least stage such that \( x \in X^{\alpha_x}_{2i, s+1} \) then \( x \in X^{\alpha_x}_{2i-1, s} \).  Furthermore, by \eqref{E:A-and-C:C-first} we must have \( x \in C_s \), so \( p_{\alpha_x}(i,x,s) \) is defined by way of the second clause.
	
	By lemma \ref{lem:delay-is-good}, we know that \( t= p_{\alpha_x}(i,x,s) \) is well defined hence either \( x \in A_t \) or \( x \in B_t \).  However, \( p_{\alpha_x}(s) \geq p_{\alpha_x}(i,x,s) \) and \( x \in X^{\alpha_x}_{2i-1, s} \) so \( x \not\in A_t \).  Hence \( x \in B \).
\end{proof}

\begin{lemma}\label{lem:B-contains}
	If \( \alpha \) is on the true path then 
	\begin{equation*}
		\left[\begin{alignedat}{3}
	 (A\cup T^{\alpha}_2)&\cap(S_\alpha\cap T^{\alpha}_1) &&\subseteq^* & B_1 &\cap(S_\alpha\cap T^{\alpha}_1) \\
	 (A\cup T^{\alpha}_3)&\cap(S_\alpha\cap T^{\alpha}_2) &&\subseteq^* & B_2 & \cap(S_\alpha\cap T^{\alpha}_2)\\
	 & &&\ldots \\
	 (A\cup T^{\alpha}_n)&\cap(S_\alpha\cap T^{\alpha}_{n-1}) &&\subseteq^* & B_{n-1} &\cap(S_\alpha\cap T^{\alpha}_{n-1}) \\
	 A&\cap(S_\alpha\cap T^{\alpha}_n) &&\subseteq^* & B_n &\cap(S_\alpha\cap T^{\alpha}_n) 
	 \end{alignedat}\right]
	\end{equation*}
\end{lemma}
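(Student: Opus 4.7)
The plan is to split each line of the claimed containment into two contributions: the part coming from $A$ (handled immediately by Lemma \ref{lem:delay-into-A}) and the part coming from $T^\alpha_{i+1}$ (which requires a short new argument using the nestedness of the underlying $2n$-c.e.\ presentation together with the fact that the IF condition \eqref{E:Q_2nif} holds on the true path). The final displayed line involves only $A$, so it is exactly Lemma \ref{lem:delay-into-A} applied with $i = n$, and similarly the $A$-summand of every earlier line is dispatched by the same lemma. It therefore remains only to show that, for each $i < n$ and cofinitely many $x \in T^\alpha_{i+1}\cap S_\alpha \cap T^\alpha_i$ with $x\notin A$, we have $x\in B_i$.

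To prove this, fix such an $x$. Because $T^\alpha_j\subseteq T^\alpha_1$ for every $j\geq 1$ (immediate from \eqref{E:def-D-and-Ti:Ti}), we have $x\in T^\alpha_1\cap S_\alpha$; so by Lemma \ref{L:alpha_x}, if $x$ is large enough then $\alpha_x\subseteq\alpha$. Definition \eqref{E:def-D-and-Ti:Ti} then yields $x\in X^{\alpha_x}_{2i+1}$, and the nestedness property \eqref{E:A-and-C:nested} of the $2n$-c.e.\ presentation supplied by Lemma \ref{lem:major-2n-tardy} forces $x\in X^{\alpha_x}_{2i}$; hence $x\in D_i$ by \eqref{E:def-D-and-Ti:Di}.

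Finally, by the construction of the true path $f$, the last coordinate of $\alpha$ (a $j$-coordinate) was chosen precisely so that \eqref{E:Q_2nif} holds with $S=S_\alpha$; in particular $D_i\cap(S_\alpha-A)=B_i\cap(S_\alpha-A)$ for every $i$. Combining this with $x\in D_i\cap S_\alpha\cap\bar{A}$ gives $x\in B_i$, completing the proof. The heavy technical work has already been concentrated in Lemma \ref{L:alpha_x} and Lemma \ref{lem:delay-into-A}; the only genuinely new step is the passage from $T^\alpha_{i+1}$ to $D_i$ via the nesting $X^{\alpha_x}_{2i+1}\subseteq X^{\alpha_x}_{2i}$, and the only subtle point to verify carefully is that the definition of $f(2k+2)$ really does guarantee the IF condition at every true-path node $\alpha$ where $S_\alpha$ is defined, so that \eqref{E:Q_2nif} is available for free in the last step. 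With that in hand the argument is routine and there is no substantive obstacle.
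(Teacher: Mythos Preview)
Your proposal is correct and follows essentially the same route as the paper: split each line into the $A$-contribution (covered by Lemma~\ref{lem:delay-into-A}) and the $T^{\alpha}_{i+1}\setminus A$ contribution, then for the latter use Lemma~\ref{L:alpha_x} to obtain $\alpha_x\subseteq\alpha$, pass from $x\in T^{\alpha}_{i+1}$ to $x\in X^{\alpha_x}_{2i+1}\subseteq X^{\alpha_x}_{2i}$ via \eqref{E:def-D-and-Ti:Ti} and \eqref{E:A-and-C:nested}, conclude $x\in D_i$ by \eqref{E:def-D-and-Ti:Di}, and finish with \eqref{E:Q_2nif} on $S_\alpha$. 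Your remark that the definition of $f(2k+2)$ is precisely what guarantees \eqref{E:Q_2nif} at true-path nodes is exactly the point the paper relies on implicitly.
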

\begin{proof}
	By Lemma \ref{lem:delay-into-A}, the last clause is established. Hence,  let \( x \) be in  \mbox{\( (A\cup T^{\alpha}_{i+1})\cap(S_\alpha\cap T^{\alpha}_i) \)} for \( i < n-1 \) and such that $\alpha_x\subseteq \alpha$ (This is true for all but finitely many of these $x$ by Lemma \ref{L:alpha_x}.)  Again by lemma \ref{lem:delay-into-A} the result is established except for \( x \in T^{\alpha}_{i+1} - A \) so assume this is the case.  Since \( x \in T^{\alpha}_{i+1} \) it follows that \( x \in X^{\alpha_x}_{2i+1} \) and by \eqref{E:A-and-C:nested} we have \( x \in X^{\alpha_x}_{2i} \) so \( x \in D_i \).  But as \( \alpha \) on the true path \( B_i \cap (S_\alpha - A) = D_i \cap (S_\alpha - A) \) and \( x \in D_i \cap (S_\alpha - A) \).  Hence \( x \in B_i \) completing the proof.
\end{proof}

\begin{lemma}\label{lem:B-contained-in}
	If \( \alpha \) is on the true path then 
	\begin{equation*}
		\left[\begin{alignedat}{3}
	 (A\cup T^{\alpha}_2)&\cap(S_\alpha\cap T^{\alpha}_1) &&\supseteq^* & B_1 &\cap(S_\alpha\cap T^{\alpha}_1) \\
	 (A\cup T^{\alpha}_3)&\cap(S_\alpha\cap T^{\alpha}_2) &&\supseteq^* & B_2 & \cap(S_\alpha\cap T^{\alpha}_2)\\
	 & &&\ldots \\
	 (A\cup T^{\alpha}_n)&\cap(S_\alpha\cap T^{\alpha}_{n-1}) &&\supseteq^* & B_{n-1} &\cap(S_\alpha\cap T^{\alpha}_{n-1}) \\
	 A&\cap(S_\alpha\cap T^{\alpha}_n) &&\supseteq^* & B_n &\cap(S_\alpha\cap T^{\alpha}_n) 
	 \end{alignedat}\right]
	\end{equation*}
\end{lemma}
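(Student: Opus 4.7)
The plan is to prove the reverse containment of Lemma \ref{lem:B-contains} by a mirror-image case split on whether $x \in A$, using the other half of \eqref{E:Q_2nif} together with the $2n$-c.e.\ structure of the $X^{\alpha_x}$'s. Throughout I restrict attention to $x \in T^\alpha_1 \cap S_\alpha$ with $\alpha_x \subseteq \alpha$, discarding finitely many via Lemma \ref{L:alpha_x}. Since $\alpha$ lies on the true path, the definition of $f(2k+2)$ guarantees that $\vec{D}, \vec{B}, S_\alpha$ satisfy \eqref{E:Q_2nif}, and hence $B_i \cap (S_\alpha - A) = D_i \cap (S_\alpha - A)$.

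Fix such an $x \in B_i \cap S_\alpha \cap T^\alpha_i$ and first consider $i < n$. If $x \in A$ there is nothing to prove, so assume $x \notin A$. Then $x \in D_i$, and \eqref{E:def-D-and-Ti:Di} gives $x \in X^{\alpha_x}_{2i}$; moreover $x \in T^\alpha_i$ together with $\alpha_x \subseteq \alpha$ yields $x \in X^{\alpha_x}_{2i-1}$ (immediately from \eqref{E:def-D-and-Ti:Ti} for $i \geq 2$, and for $i=1$ by tracing the definition of $\alpha_x$, which forces $x \in T^{\alpha_x}_1$ and hence $x \in X^{\alpha_x}_1$). Now I exploit the $2n$-c.e.\ structure. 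Since $x \notin A$, $x \in X^{\alpha_x} = \overline{A}$, so some $k \in [1,n]$ satisfies $x \in X^{\alpha_x}_{2k-1} - X^{\alpha_x}_{2k}$. The nesting \eqref{E:A-and-C:nested} combined with $x \in X^{\alpha_x}_{2i}$ rules out $k \leq i$, forcing $k \geq i+1$ and hence $x \in X^{\alpha_x}_{2k-1} \subseteq X^{\alpha_x}_{2i+1}$. Combined with $x \in T^\alpha_1$, \eqref{E:def-D-and-Ti:Ti} delivers $x \in T^\alpha_{i+1}$, as required.

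For the last clause ($i = n$), the same setup yields $x \in X^{\alpha_x}_{2n-1} \cap X^{\alpha_x}_{2n}$ under $x \notin A$. But then by nesting $x$ lies in every even component $X^{\alpha_x}_{2k}$ with $k \leq n$, so no $k$ can satisfy $x \in X^{\alpha_x}_{2k-1} - X^{\alpha_x}_{2k}$, forcing $x \notin X^{\alpha_x} = \overline{A}$ and contradicting $x \notin A$. Hence $x \in A$, giving the desired $B_n \cap S_\alpha \cap T^\alpha_n \subseteq^* A$. I expect no genuine obstacle: the nicely-nested $2n$-c.e.\ structure and the $C$-before-$X^{2n}_{e_2}$ delay secured by \eqref{E:A-and-C:nested}, \eqref{E:A-and-C:nicely-nested}, and \eqref{E:A-and-C:C-first} do all of the real work, and the only mildly delicate point is the $i = 1$ subcase of $x \in T^\alpha_i \Rightarrow x \in X^{\alpha_x}_{2i-1}$, which requires unpacking the definitions of $T^\alpha_1$ and $\alpha_x$ rather than reading it off \eqref{E:def-D-and-Ti:Ti}.
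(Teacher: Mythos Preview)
Your argument is correct and follows essentially the same route as the paper's proof: restrict to $x$ with $\alpha_x \subseteq \alpha$ via Lemma~\ref{L:alpha_x}, use \eqref{E:Q_2nif} to pass from $B_i$ to $D_i$ and hence to $X^{\alpha_x}_{2i}$, then invoke the nested $2n$-c.e.\ structure to force $x \in X^{\alpha_x}_{2i+1}$ (or a contradiction when $i=n$), and finally read off $x \in T^\alpha_{i+1}$ from \eqref{E:def-D-and-Ti:Ti}. The one minor streamlining the paper achieves is that it never needs $x \in X^{\alpha_x}_{2i-1}$ at all: once you have $x \in X^{\alpha_x}_{2i}$ and $x \notin A$, nesting alone already forces $x \in X^{\alpha_x}_{2i+1}$ (otherwise $x$ lies in every even component up to $2i$ and no component beyond, hence in no difference, hence in $A$), so your ``mildly delicate'' $i=1$ subcase can simply be dropped.
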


\begin{proof}

	

Assume \( x \in B_i \cap(S_\alpha\cap T^{\alpha}_i) \), and suppose that $\alpha_x\subseteq\alpha$ (This holds for all but finitely many \( x \in B_i \cap(S_\alpha\cap T^{\alpha}_i) \) by Lemma \ref{L:alpha_x}).  
If \( x \in A \), we are done,  so suppose not.  Since \eqref{E:Q_2nif} is satisfied and \( x \in B_i \cap (S - A) \), we must have \( x \in D_i \).  Thus, by \eqref{E:def-D-and-Ti:Di} we have \( x \in X^{\alpha_x}_{2i} \).  If  \( i = n \), we have that  \( x \in X^{\alpha_x}_{2n}\subset A \) since $\alpha_x\subseteq \alpha$ is on the true path.  This contradicts our original assumption.    For $i<n$,  since \( x \not\in A \), we also have  that \( x \in X^{\alpha_x}_{2i+1} \) by nesting.   Since \( x \in T^{\alpha}_{i}\subset T^{\alpha}_{1} \),  \eqref{E:def-D-and-Ti:Ti} entails that \( x \in T^{\alpha}_{i+1} \).  Hence, \( x \in (A\cup T^{\alpha}_{i+1})\ \cap\ (S_\alpha\cap T^{\alpha}_{i}) \), completing the lemma.
\end{proof}

We now finish demonstrating that \( A \) satisfies \( Q_{2n}(A) \).  Given any \( \vec{B} \), we respond by building \( \vec{D} \) as above.  Given \( S_j \sqsubset C \), we first check whether \( \vec{B}, \vec{D}, S_j \) satisfy \eqref{E:Q_2nif}, and if not, we are done.  If so, there is some finite sequence \( \alpha = (j_0,e_0, j_1, e_1, \ldots, j_n, e_n) \) along the true path with \( j_n = j \), and Lemmas \ref{lem:B-contains} and \ref{lem:B-contained-in} above guarantee that \( S_j, \vec{T^{\alpha}}, \vec{B} \) satisfy \eqref{E:Q_2nthen} up to $=^*$.  By construction and Lemma \ref{L:alpha_x}, \( D_{i+1} \subseteq^* D_i \) and \( T_{i+1} \subseteq^* T_i \).   To ensure that \eqref{E:Q_2nthen}, \( D_{i+1} \subseteq D_i \), and \( T_{i+1} \subseteq T_i \)  hold exactly, remove the elements in $S_\alpha\cap T_1^\alpha$ that violate the equalities in \eqref{E:Q_2nthen} or the subset properties from each  $T_i^\alpha$.  
Only \( T_1 \supseteq \overline{C} \) remains potentially unsatisfied.  Note that all elements just removed from $T_i^\alpha$   are elements of $C$, so these finite modifications do not affect \( T^{\alpha}_1 \supseteq \overline{C} \).

If \( \alpha \supseteq f \), we have \( \Gamma(x, \infty) \supseteq \alpha \) for almost every \( x \) and \( X^{\alpha}_1 \supseteq \overline{C} \).  Hence, \( T^{\alpha}_1 \supseteq^{*} \overline{C} \).  Thus, a finite modification of \( T^{\alpha}_1 \) suffices to ensure \( T^{\alpha}_1 \supseteq \overline{C} \).  Since \eqref{E:Q_2nif} only makes demands on \( S_\alpha \subseteq C \), these modifications do not affect the satisfaction of \eqref{E:Q_2nif}.      This completes the proof of satisfaction.

\section{A \texorpdfstring{low$_2$}{low2} and simple very tardy}\label{S:low2simple}

Previously Harrington and Soare established  the following theorem in \cite{the-delta-0-3-automorphism-method}.

\begin{theorem}[Harrington and Soare]\label{thm:low-simple-imply-almost-prompt}
	If \( A \) is low (or even semi-low) and simple then \( A \) is almost prompt.
\end{theorem}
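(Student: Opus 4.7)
The plan is to build a nondecreasing total computable function $p(s)$ witnessing that $A$ is almost prompt, exploiting two consequences of the hypotheses. The argument will use only semi-lowness (the weaker of the two hypotheses) plus simplicity.

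First, I would observe that simplicity guarantees an abundance of witnesses: if $W$ is c.e.\ and $W \supseteq \overline{A}$, then $W \cap A$ is infinite, since otherwise $W$ would differ from $\overline{A}$ by a finite set, making $\overline{A}$ c.e.\ and $A$ computable, contradicting simplicity. I would apply this to the c.e.\ trace $V^n_e := \bigcup_{s} X^n_{e,s}$, which is uniformly c.e.\ in $(n,e)$. Whenever $X^n_e = \overline{A}$, we have $V^n_e \supseteq \overline{A}$, hence $V^n_e \cap A$ is infinite, i.e., infinitely many $x \in A$ appear in $X^n_{e,s}$ at some stage $s$ and later enter $A$. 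Each such $x$ is a potential almost-prompt witness for the pair $(n,e)$.

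Second, I would exploit semi-lowness: the set $L(A) = \{e : W_e \cap \overline{A} \neq \emptyset\}$ is $\Delta^0_2$, so there is a total computable approximation $L(A)_s \to L(A)$. Since an index for $V^n_e$ is computable from $(n,e)$, the $\emptyset'$-oracle can identify, in the limit, exactly which $(n,e)$ satisfy $V^n_e \not\subseteq A$, and this list includes every $(n,e)$ with $X^n_e = \overline{A}$. Using a $\Delta^0_2$-guessing construction (a standard tree-of-strategies), I would build $p$ as follows. Requirements $R_{n,e}$ look for witnesses $(x,s)$ with $x \in X^n_{e,s}$ and $x \in A_{p(s)}$; at stage $s$, along the currently guessed true path, I define $p(s)$ to be the least $t \geq p(s-1)$ at which every pending witness $x$ (some $x$ already seen in $X^n_{e,s'}$ for $s' \leq s$, currently attached to an active $R_{n,e}$ with $n+e \leq s$) is either confirmed by $x \in A_t$, or else released back to $V^n_e$ for reassignment because $x$ has already left $X^n_{e, \cdot}$. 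Whenever the semi-low approximation changes on a relevant index, the tree shifts to the left and stale commitments are abandoned; because simplicity ensures infinitely many fresh candidates for each $(n,e)$, the construction recovers along the true path.

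The main obstacle is reconciling the computability of $p$ with the fact that the natural guiding oracle is $\emptyset'$, and this is the delicate bookkeeping: one must check that along the true path only finitely many witnesses for each $R_{n,e}$ are lost to guess changes, that $p$ is defined and nondecreasing at every stage (since waiting is bounded by the number of currently pending witnesses at levels $\leq s$), and that once a confirming stage $t$ is found for a pair $(n,e)$ the witness is frozen and never spoiled by the subsequent $n$-c.e.\ mind changes of $X^n_e$. Verifying along the true path that every $(n,e)$ with $X^n_e = \overline{A}$ eventually receives a bona fide witness reduces, by the simplicity-derived infinitude of candidates, to noting that the $\Delta^0_2$ approximation to $L(A)$ is eventually correct on any fixed index.
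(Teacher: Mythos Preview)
The paper does not actually prove this theorem; it is cited from Harrington and Soare, and the paper offers only a one-sentence summary of their argument: it proceeds by contraposition, showing that if \( A \) is very tardy and semi-low then one can choose a computable \( p \) growing so fast that whatever \( n \)-c.e.\ presentation of \( \overline{A} \) witnesses tardiness against \( p \) must leave an infinite c.e.\ set inside \( \overline{A} \), contradicting simplicity. Your direct route (build \( p \) witnessing almost-promptness outright) and the contrapositive route inevitably share the same two ingredients you isolate, but the contrapositive framing sidesteps the very obstacle you call ``delicate'': there \( p \) is merely a challenge function, and the infinite c.e.\ subset of \( \overline{A} \) is read off the opponent's tardy response, so one never has to argue that a wait defining \( p(s) \) terminates.

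In your sketch that wait is the genuine gap. You set \( p(s) \) to be the least \( t \geq p(s-1) \) at which every pending witness is either confirmed (\( x \in A_t \)) or released (\( x \) has left \( X^n_{e,\cdot} \)), but you give no reason this search halts. If a pending witness \( x \) lies in \( \overline{A} \) and \( X^n_e = \overline{A} \), then \( x \) never enters \( A \) and never permanently leaves \( X^n_e \), so no such \( t \) exists. Your parenthetical justification, that ``waiting is bounded by the number of currently pending witnesses at levels \( \leq s \),'' conflates a count with a stage bound and does not address this. The semi-low guessing, as you describe it, lives on the tree and only abandons a bad witness at a \emph{later} stage when the approximation flips --- too late to rescue the already-underway search for \( p(s) \). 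To make a direct construction work you would need to interleave the \( \Delta^0_2 \) approximation into the search itself (so the search aborts and sets \( p(s) \) the moment the guess on a pending witness changes), or else bound the search explicitly and argue along the true path that the bound is eventually adequate; neither mechanism is specified, and without one the function \( p \) you describe need not be total.
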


Since a very tardy set is simply one that is not almost prompt,  this theorem shows  that no very tardy can be both low and simple.  Harrington and Soare's proof demonstrates that if \( A \)  is very tardy and semi-low there is a computable function that grows fast enough so that the corresponding \( n \)-c.e.\ complement of \( A \) is forced to leave an infinite c.e.\ set in \( \overline{A} \).  We show that Theorem \ref{thm:low-simple-imply-almost-prompt} cannot be improved by constructing an example of a low$_2$ simple very tardy set.  This example provides a negative answer to Question \ref{Q:Q1HS} of Harrington and Soare.   We first offer a sketch of the tension in Theorem \ref{thm:low-simple-imply-almost-prompt} so as to motivate the construction of our example.

Building a low set requires that we eventually preserve computations of the form \( \recfnl{e}{A_s}{e} \), while simplicity requires that if \( \REset[][s]{i} \) continues to grow, we eventually enumerate one of its members into \( A \).  Normally, we  build a low simple set by only allowing a finite number of computations \( \recfnl{e}{A_s}{e} \) to restrain  elements we see enter \( \REset{i} \) out of  \( A \).  However, building a very tardy set requires that we announce our intention to enumerate some element \( y \) into \( A \) long in advance. During the intervening time, a computation \( \recfnl{e}{A_s}{x} \) might converge and impose a restraint that  \( y \) is obligated to respect. Hence,  \( y \) must abandon its previously announced intention to enter \( A \).  If \( A \) was meant to be \( 2 \)-tardy, this alone would cause a failure since \( 2 \)-tardy sets cannot revoke their announced intentions to place elements into \( A \). It might seem, on the other hand, that if we only aim to build a very tardy we could simply choose to leave  \( y \) out of \( A \) and wait for another chance to place an element from \( \REset{i} \) into \( A \).  However, by the time we observe that some \( y_n \) enters \( \REset{i} \), some later \( \REset{i_n} \) may have already attempted to enumerate \( y_n \) into \( A \) and abandoned that attempt  in response to a restraint from some computation  \( \recfnl{e_n}{A_s}{e_n} \). Indeed, each \( y_n \) entering \( \REset{i} \) may have already exhausted its guesses about entering \( A \) so  that \( \REset{i} \) no longer has the opportunity to place \( y \) into \( A \). 

It is clear from the above discussion that  the need to restrain elements from entering \( A \) creates the potential to `use up' the elements of some infinite c.e.\ set before we have the chance to place one of its members into \( A \).  Since lowness requirements in general require imposing some kind of restraint, it is interesting to see that Harrington and Soare's result fails for a weaker notion of lowness.

\begin{theorem}\label{thm:low2-2tardy-simple}
	There is a simple \( 2 \)-tardy set \( A \) with \( \jjump{A}=\zerojj \)
\end{theorem}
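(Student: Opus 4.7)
The plan is to construct $A$ by a tree-based $0''$-priority argument with $\Pi^0_2$ outcomes, carrying three types of requirements: $S_i$ (simplicity), $T_e$ (2-tardiness with respect to $\varphi_e$), and $L_e$ (low$_2$).  Each non-terminal node on the tree of strategies has outcomes coming from a $\Pi^0_2$ guess, so the true path $f = \liminf_s f_s$ is $\Pi^0_2$ and in particular computable from $0''$.

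The $T_e$ strategy imports the $2$-tardy delay machinery of \S\ref{S:3tardy}: at each stage $s$ with $\alpha \subseteq f_s$ and $\alpha$ assigned to $T_e$, every $x \leq s$ with $x \notin A_s$ is enumerated into $X^2_{h(e),1}$; when a simplicity node below wants to move some $x$ into $A$, it first places $x$ into $X^2_{h(e),2}$ at a stage $s^*$ and waits until $\varphi_{e,t}(s^*)\converge$ before letting $x$ proceed further down the cascade.  The $L_e$ strategy at node $\alpha$ imposes restraint equal to the use of $\recfnl{e}{A_s}{e}$ whenever this computation converges, with outcome $\infty$ (infinitely many $\alpha$-expansionary stages at which the restraint grows) and $f$ (finitely many) reflecting the $\Pi^0_2$ question of whether the restraint stabilizes.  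The $S_i$ strategy at node $\beta$ waits for $W_i$ to enumerate a fresh witness $x_\beta$ larger than the current restraints above $\beta$ and then launches $x_\beta$ through the cascade of $T_e$-delays on $\beta$; if $x_\beta$ is later captured by a lowness restraint or $\beta$ is injured, we abandon $x_\beta$ and start over with a new witness.

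For low$_2$ we argue as follows.  Along the true path, each $L_e$ either settles on its $\infty$-outcome with a fixed limiting restraint (in which case $e \in A'$ is decidable from $0''$ once the restraint is known) or settles on its $f$-outcome (in which case $\recfnl{e}{A}{e}\diverge$).  Since the true path is $\Pi^0_2$, $0''$ computes it and can therefore decide $e \in A'$ for every $e$; hence $A' \leq_T 0''$, i.e., $\jjump{A} \equiv_T \zerojj$.  For simplicity on the true path: if the higher-priority $L_e$ strategies all lie on their $\infty$-outcomes with limiting restraints bounded by some $R$, any sufficiently late witness from $W_i$ exceeds $R$ and completes its cascade; if some higher-priority $L_e$ lies on its $f$-outcome, its infinitely many restraint-drops give infinitely many injury-free windows in which a cascade has room to complete.

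The main obstacle will be showing that simplicity is not permanently starved on the true path despite the very long 2-tardy delays.  The crucial gap exploited here is between ``low'' (where restraints must converge) and ``low$_2$'' (where restraints need only $\liminf$-stabilize): the $f$-outcome's infinitely many restraint-drops furnish precisely the slack that the Harrington--Soare argument denies to low semi-low sets, allowing some cascade of $S_i$ eventually to complete and deposit a witness $x_\beta$ into $A$.
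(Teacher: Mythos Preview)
Your proposal has two genuine gaps.

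First, the inference ``$A' \leq_T 0''$, i.e., $\jjump{A} \equiv_T \zerojj$'' is simply false.  A high c.e.\ set has $A' \equiv_T 0''$ yet $A'' \equiv_T 0''' >_T 0''$.  Deciding membership in $A'$ from $0''$ gives only $A'' \leq_T 0'''$, not $A'' \leq_T 0''$.  So even if your construction succeeded exactly as described, it would not establish the theorem.

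Second, and more seriously, your $L_e$ module is the ordinary lowness strategy, and this is precisely the strategy that the pre-theorem discussion in the paper explains cannot coexist with $2$-tardiness and simplicity.  The trouble is your clause ``if $x_\beta$ is later captured by a lowness restraint \ldots\ we abandon $x_\beta$.''  Once $x_\beta$ has entered $X^2_{\gamma,2}$ at any tardy node $\gamma$ along the true path, it is \emph{committed}: either $x_\beta$ enters $A$ or $X^2_\gamma \neq \overline{A}$, because a $2$-c.e.\ set has no third component to restore $x_\beta$.  So you cannot abandon it.  If instead you let $x_\beta$ proceed into $A$, it may injure the computation $\recfnl{e}{A}{e}$ whose use grew above $x_\beta$ during the (unboundedly long) cascade.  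Your ``injury-free windows'' idea does not rescue this: the duration of a cascade is controlled by the convergence speed of the delay functions $\varphi_{e'}$, which you do not control, so no window is guaranteed to be wide enough.

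The paper avoids both problems by using a different characterization of $A''$: the set $\{e : \REset(A){e} \text{ is infinite}\}$ is Turing-equivalent to $A''$.  The requirement
\[
\mathcal{R}_e:\quad \limsup_{s} \card{\REset(A_s)[s]{e}} = \infty \implies \card{\REset(A){e}} = \infty
\]
is met by a \emph{graduated} restraint $\rstrn[\alpha]{\beta}{s}$ that asks a simplicity node $\beta \supset \alpha\concat 0$ only to protect the $\lh{\beta}-\lh{\alpha}$ longest-surviving members of $\REset(A_s)[s]{\alpha}$.  Each fixed $\mathcal{P}_\beta$ on the true path therefore faces a restraint with a finite limit, while the graduated protection still leaves cofinitely many elements in $\REset(A){\alpha}$ when the $\limsup$ is infinite.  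This is the key structural idea your outline is missing.
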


For the sake of readability, we denote the \( e \)th partial computable function  \( \recfnl{e}{}{} \) as \( p_e \) when regarded as a potential total computable function \( p \) as in  Definition \ref{D:verytardy}.
	
Our construction will satisfy a version of the following two requirements.

	\begin{align}
	 	\tag*{ $\mathcal{N}_{e}$ }  p_e \text{ is partial  or } \text{there is a 2-c.e.\ set } X^2_e = \overline{A} \text{ s.t. } X^2_{e,s} \isect A_{p(s)} = \eset  \\
	 	\tag*{ $\mathcal{P}_{e}$ } \card{\REset{e}} = \infty \implies A \isect \REset{e} \neq \eset  \\
	 	\tag*{ $\mathcal{R}_{e}$ } \text{If } \limsup_{s} \card{\REset(A_s)[s]{e}} = \infty \implies \card{\REset(A){e}} = \infty  
	\end{align}

Requirement \( \mathcal{P}_e \) guarantees that \( A \) is simple, and \( \mathcal{N}_e \) ensures that \( A \) is \( 2 \)-tardy.  Notice that 
\[
\limsup_{s} \card{\REset(A)[s]{e}} = \infty  \iff \card{\set{\pair{s}{n}}{\ s =\murec{t}{\card{\REset(A)[t]{e}} \geq n}}} = \infty
\]
This fact and the fact that \( \set{e} {\ \card{\REset(A){e}} = \infty } \Tequiv \jjump{A} \) guarantee that \( \jjump{A}=\zerojj \).  Our actual construction will take place on a tree.  Taking advantage of this tree structure, however,  will require a minor modification of \( \mathcal{R}_e \).

\subsection{Tree Argument}

For every element \( \alpha \in \bstrs \), we  tentatively associate a module \( \mathcal{Q}_\alpha \) that tries to implement an associated requirement according to the following rule.

\begin{equation*}
	\mathcal{Q}_\alpha \text{ implements } \begin{cases}
																	\mathcal{N}_e  & \text{ if } \lh{\alpha} = 3e \\
																	\mathcal{P}_e  & \text{ if } \lh{\alpha} = 3e+1 \\
																	\mathcal{R}_e  & \text{ if } \lh{\alpha} = 3e+2 \\
															\end{cases}
\end{equation*} 

In the interest of clarity, we simply write \( \mathcal{N}_\alpha, \mathcal{P}_\alpha \) or \( \mathcal{R}_\alpha \) to refer to the module \( \mathcal{Q}_\alpha \) in the case that it is associated with \( \mathcal{N}_e, \mathcal{P}_e \) or \( \mathcal{R}_e \), respectively.  Conversely, we use \( \mathcal{Q}_{n} \) to denote the requirement implemented by the modules \( \mathcal{Q}_\alpha \) with \( \lh{\alpha}=n \).  As a notational convenience, we  write \( p_\alpha\) and \( \REset{\alpha} \) in place of \( p_e\) and \( \REset{e} \) when discussing \( \mathcal{N}_\alpha, \mathcal{P}_\alpha \) and \( \mathcal{R}_\alpha \).  We say \( \mathcal{Q}_\alpha \) is satisfied to indicate that \( \mathcal{Q}_\alpha \) satisfies \( \mathcal{Q}_{\lh{\alpha}} \).

We define the true path function, \( f\map{\omega}{\set{0,1}{}} \) with the property that if \( \alpha \subseteq f \) then the module \( \mathcal{Q}_\alpha \)  satisfies \( \mathcal{Q}_{\lh{\alpha}} \).  In an abuse of notation, we also view \( f \) as a function from \( \functo{f}{\bstrs}{\set{0,1}{}} \) where \( f(\alpha) \) indicates the manner in which \( \mathcal{Q}_{\lh{\alpha}} \) is satisfied so that    \( f(\lh{\alpha}+1) = f(\lh{\alpha})\concat f(\alpha) \).  

At any given stage \( s \), we will have an approximation \( f_s \in \bstrs \) to \( f \) where we similarly abuse notation and write \( f_s(\alpha) \) to denote \( f_s(\lh{\alpha}) \) provided \( f_s \supseteq \alpha \).  At a given stage \( s \), only those modules lying along the approximation \( f_s \) receive attention (are visited) so in what follows we always assume that \( \alpha \subseteq f_s \) when describing \( \mathcal{Q}_\alpha \).  If \( \alpha \) is not visited at stage \( s \), then we stipulate that \( f_s(\alpha) \) is undefined.  Thus, if \( \alpha \) is the \( \subset \)-maximal node visited at \( s \), then \( f_s=\alpha\concat f_s(\alpha) \).   The construction will ensure that \( f= \liminf_s f_s \) and the action of each module will be described uniformly in \( \alpha \) so that \( f_s \in \bstrs \) is a computable function of \( s \).

\subsubsection{Motion On The Tree}

Various numbers, called balls, will be located on our nodes.  The function  \( \curnode{x}{s} \) equals the node occupied by \( x \) at stage \( s \) or \( \diverge \) if \( x \) is not on the tree.  One should think of the tree as growing upward with elements trickling down the tree towards \( A \) during the course of the construction.   Any element reaching \( \estr \) is immediately placed into \( A \), so \( \curnode{x}{s} = \estr \) if and only if \( x \in A_s \) after which point \( \curnode{x}{s} \) it can no longer change value.    

At a given stage \( s \), the construction will begin by executing \( \mathcal{Q}_{\estr} \).  Suppose the module \( \mathcal{Q}_\beta \) receives attention and sets \( f_s(\beta) =i \).  If,  for all \( \lh{\beta} < s \), requirement \( \mathcal{Q}_\beta \) did not enumerate a ball onto the tree and no ball occupies any \( \beta' \subseteq \beta\, \concat\, i \), then \( \mathcal{Q}_{\beta\ \concat\, i} \) receives attention.  This condition ensures that whenever \( \mathcal{Q}_\alpha \) is executed, every ball below \( \alpha \) on the tree is already in \( A \).   We say that the node \( \beta \) is visited at stage \( s \) if the module \( \mathcal{Q}_\beta \) is executed at stage \( s \).  If \( \beta \) is visited at stage \( s \) and \( f_s(\beta)=0 \), we say that \( \beta \) is expansionary at stage \( s \).  We write \( \alpha <_L \beta \) to indicate that for some \( l \), node \( \alpha\restr{l} \) occurs lexicographically before \( \beta\restr{l} \) and if \( f_s <_L \beta \), we say that \( \beta \) (equivalently \( \mathcal{Q}_\beta \)) is reset at stage \( s \).

If \( \curnode{x}{s} \) is reset at stage \( s \), then \( x \) is removed from the machine and \( \curnode{x}{s} \) is undefined.  Intuitively, this corresponds to abandoning our plans regarding \( x \) because our approximation to the true path moved to the left of \( \curnode{x}{s} \).   Since the \( \mathcal{N}_\beta \) modules are the only modules that  wish to delay the entry of elements into \( A \), we only allow balls to occupy nodes of the form \( \beta\, \concat\, 0 \) where  \( \mathcal{Q}_\beta \) implements \( \mathcal{N}_\beta \).  That is, balls flow down towards $A$ until they are restrained by some delay function \( p_\beta \).  We ensure by definition that whenever \( f_s \) extends \( \curnode{x}{s-1}=\beta\, \concat\, 0 \), the restraint imposed by \( \mathcal{N}_\beta \) is released and the ball is moved to the $\subset$-maximal node \( \nu\concat[0] \subset \beta \) with \( \mathcal{Q}_\nu \) implementing \( \mathcal{N}_\nu \) or into \( A \) if no such \( \nu \) exists.

 We write:

	\begin{align*}
		\Lstg{\alpha}{s}&=\max_{t\leq s}\ \{t \mid \alpha \text{ is visisted at stage } t\}\\
		\reset{\alpha}{s}&=\max_{t\leq s}\ \{t\mid \mathcal{Q}_\alpha \text{ is reset at stage } t\}
	\end{align*}

Any \( \mathcal{P}_\alpha\) for \( \alpha \supseteq \beta\, \concat\, 0  \) can place
elements  on the tree at an allowed node \( \beta\ \concat\, 0 \).  However, at stage \( s \),  \( \mathcal{P}_\alpha \) is only allowed to place a ball \( x \) onto the tree provided \( \reset{\alpha}{s} < x \leq s \).  Note that this implies that  there is no \( t < s \) with  \( \curnode{x}{t} <_L \alpha \).
 This ensures that, when reset, a node  starts with a fresh set of balls.  In other words,   nodes cannot recycle balls that have been to their left, or equivalently,  a node can recycle a ball only when that node has never been in a position to notice that the ball was used previously.  We adopt the convention that the use of \( \REset[A][s]{\alpha} \) is no more than \( s \).  Hence, the action of the machine ensures that if \( \alpha \) is visited at stage \( s \) then no \( \alpha' \) with \( \alpha <_L \alpha' \) can disrupt this computation because \( \alpha' \) was reset and any new balls that might be placed in for $\alpha'$ will be greater than $s$.  

The requirements \( \mathcal{R}_\alpha\) for \( \beta \subseteq \alpha \) will work together to define a restraint function \( \rstrn{\beta}{s} \), and \( \mathcal{P}_\beta \) will only be allowed to add a ball \( x \) to the machine only if \( x > \rstrn{\beta}{s} \).  More precisely, each \( \mathcal{R}_\alpha \) will have its own restraint function  \( \rstrn[\alpha]{}{} \), and we define

\begin{equation*}
	\rstrn{\beta}{s} = \max_{\alpha \subset \beta} \rstrn[\alpha]{\beta}{s} 
\end{equation*}

If neither the action of the tree nor the restraint function above bars \( \mathcal{P}_\beta \) from placing \( x \) on the tree at stage \( s \), then we say that \( x \) is available to \( \beta \) at stage \( s \).

\subsubsection{The \texorpdfstring{\(\mathcal{N}_\alpha \)}{N_\alpha} module}

If \( \mathcal{Q}_\alpha \) implements \(\mathcal{N}_\alpha \) then we define

\begin{equation*}
f(\alpha) = \begin{cases}
																						0 & \text{if } p_\alpha \text{ total} \\	
																						1 & \text{if } p_\alpha \text{ partial} 
																					\end{cases}
\end{equation*}

\begin{equation*}
	l^{\alpha}(s)=\murec{z}{p_{\alpha, s}(z)\uparrow} \\
\end{equation*}

\begin{equation*}
f_{s}(\alpha) = \begin{cases}
																						0 & \text{if } l^{\alpha}(s) > \max [t<s\mid f_t \supseteq \alpha\concat\, 0] \\
																							
																						1 & \text{otherwise}\\
									\end{cases}
\end{equation*}

Recall that if $f_s(\alpha)=0$, we call $s$ an expansionary stage.  
As long as it is not reset, \( \mathcal{N}_\alpha \) builds the set \( X^2_\alpha \) as follows.  At stages where \( f_s(\alpha)=0 \), every \( x < s \) with \( \curnode{{x}}{{s}} \nsubseteq \alpha \) and $x\not\in A_s$ is enumerated into \( X^2_{\alpha_1} \).  Also, whenever an element \( x \) is placed at \( \alpha\concat\, 0 \), it is enumerated into \( X^2_{\alpha_2} \).  The net effect of the definition of \( f_s \) is that if \( t \) is the last stage where some ball may have been placed at \( \alpha\concat[0] \) then \( f_s(\alpha)\not=0 \)  unless \( p_\alpha(t)\conv[s] \).  Thus, the ball is not released until the delay demanded by \( \mathcal{N}_\alpha \) after entering \( X^2_{\alpha_2}  \) has expired.  If \( \mathcal{N}_\alpha \) is reset, we reset the sets $X^2_{\alpha_1}$ and $X^2_{\alpha_2}$.   Specifically, at the next stage \( \alpha \) is visited, we initialize  \( X^2_{\alpha_1} = \set{x < s}{\curnode{{x}}{{s}} \nsubseteq \alpha } \) and \(   X^2_{\alpha_2}=\eset \) before continuing as usual.

\subsubsection{The \texorpdfstring{\( \mathcal{P}_\alpha \)}{P_\alpha} module}

If \( \mathcal{Q}_\alpha \) implements \( \mathcal{P}_\alpha \) then we define

\begin{equation*}
f(\alpha) = \begin{cases}
																						0 & \text{if } \REset{\alpha} \isect A  \neq \eset \\	
																						1 & \text{if } \REset{\alpha} \subseteq \overline{A} \text{ and finite}
																					\end{cases}
\end{equation*}

\begin{equation*}
f_{s}(\alpha) = \begin{cases}
																						0 & \text{if } A_s \isect \REset[][s]{\alpha} \neq \eset 
\\																						1 & \text{otherwise}
									\end{cases}
\end{equation*}

The action of \( \mathcal{P}_\alpha \) tries to place some element from \( \REset{\alpha} \) into \( A \).  When there is an \( x \) available for \( \alpha \) already in \( \REset[][s]{\alpha} \),  requirement \( \mathcal{P}_\alpha \) places \( x \) on the largest node \( \beta\, \concat\, 0 \subset \alpha \) with \( \mathcal{Q}_\beta \) implementing a negative requirement.

\subsubsection{The \texorpdfstring{\( \mathcal{R}_\alpha \)}{R_\alpha} module}

If \( \mathcal{Q}_\alpha \) implements \( \mathcal{R}_\alpha \), then we define

\begin{equation*}
f(\alpha) = \begin{cases}
																						0 & \text{if }  \limsup_{s} \card{\REset(A)[\Lstg{\alpha}{s}]{e}} = \infty \\	
																						1 & \text{if }  \limsup_{s} \card{\REset(A)[\Lstg{\alpha}{s}]{e}} < \infty 
						\end{cases}	
\end{equation*}
	
\begin{equation*}	l^{\alpha}(s)=\card{\REset(A_{t})[t]{e}} \text{ where } t=\Lstg{\alpha}{s}
\end{equation*}
For $n\le l^{\alpha}(s)$,
\begin{equation*}
		u^{\alpha}_s(n)=\murec{t\leq s}{\exists x_1\not= x_2\not=\ldots \not=x_n}\forall[j \leq n]\forall[{t' \in [t,s]}] [x_j \in \REset(A_{t'})[t']{\alpha} ] 
\end{equation*}
																					
\begin{equation*}
f_{s+1}(\alpha) = \begin{cases}
																						
																						0 & \text{if } l^{\alpha}(s+1) > l^{\alpha}(s) \\
																						1 & \text{otherwise}
									\end{cases}
\end{equation*}

The role of \( \mathcal{R}_\alpha \) is to define a restraint function that preserves computations of the form  \( \card{\REset(A)[s]{\alpha}}=n \).  To this end, if \( \alpha \) is visited at \( s \) then set

\begin{equation*}
	\rstrn[\alpha]{\beta}{s} = \begin{cases}
																	0 & \text{if } \beta \nsupseteq \alpha\concat\, 0 \text{, otherwise }\\
																	s & \text{if } \lh{\beta}-\lh{\alpha} > l^{\alpha}(s)\\
																	u^{\alpha}_s(\lh{\beta}-\lh{\alpha}) & \text{else } 
															\end{cases}
\end{equation*}
Notice that no restraint is placed on nodes \(\beta \nsupseteq \alpha\concat\, 0\).
Our convention about the use of \( \card{\REset(A_s)[s]{\alpha}} \) and the action of the tree guarantees that no \( \mathcal{P}_\beta \) for  \(\beta \nsupseteq \alpha\concat\, 0 \) is able to remove elements from \( \REset[A_s][s]{\alpha} \).  On the other hand, the restraint function ensures that there are only finitely many nodes that might disrupt the computations responsible for enumerating the \( n \) longest residing elements of \( \REset[A_s][s]{\alpha} \) for \(n\le l^{\alpha}(s)\).  We will demonstrate below that this ensures a variant of \( \mathcal{R}_\alpha \) is satisfied.

\subsection{Verification}

\subsubsection{Tree Requirements}

The construction above does not quite satisfy the requirements given above.  Instead, for \( \alpha \) along the true path, we show the following modified versions of the requirements are satisfied.

\begin{align*}
	\tag*{ $\mathcal{N}_{\alpha}$ } & p_\alpha \text{ is partial or  there is a 2-c.e.\ set }  X^2_\alpha =^{*} \overline{A} \text{ s.t. } X^2_{\alpha,s} \isect A_{p(s)} =\eset. \\
	\tag*{ $\mathcal{P}_{\alpha}$ } & \card{\REset{\alpha}} = \infty \implies A \isect \REset{\alpha} \neq \eset.\\
	\tag*{ $\mathcal{R}_{\alpha}$ } & \limsup_{s\to\infty} l^{\alpha}(s)= \infty \implies \card{\REset(A){\alpha}} = \infty.  
\end{align*}

The requirements \( \mathcal{N}_{\alpha} \) and \( \mathcal{P}_{\alpha} \) differ only in notation.  The requirement \( \mathcal{R}_{\alpha} \) differs in that   \( \limsup_{s} \card{\REset(A_s)[s]{\alpha}}  \) is only evaluated at \  stages \( s \) such that  \( \alpha \) is on the true path.  This avoids any transitory effects that might enumerate elements into \( \REset(A_s)[s]{\alpha} \) and then remove them again before \( \mathcal{R}_\alpha \) has a chance to act.  As we will see, however, it is no less effective a means to show that \( \jjump{A}=\zerojj \).

\subsubsection{Satisfaction}

The essential property satisfied by a \( \Pi^0_2 \) tree construction is:

\begin{lemma}\label{lem:lim-inf}
	\( f = \liminf_s f_s \)
\end{lemma}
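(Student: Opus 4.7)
The plan is to prove the statement by induction on $n$, establishing for each $\alpha \subseteq f$ of length $n$ that (i) $\alpha \subseteq f_s$ for infinitely many $s$ and (ii) $f_s \not<_L \alpha$ for all but finitely many $s$. The base case $\alpha = \estr$ is immediate, since the root is visited at every stage. For the inductive step, fix $\alpha \subseteq f$ of length $n$ satisfying (i) and (ii), and let $s_0$ be large enough that no stage $s \geq s_0$ sees $f_s <_L \alpha$; then every visit to $\alpha$ after $s_0$ produces an unreset value $f_s(\alpha) \in \{0,1\}$, and it suffices to show that this value agrees with $f(\alpha)$ at infinitely many visits while being $\geq f(\alpha)$ at cofinitely many visits.

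I would then split into cases on the type of $\mathcal{Q}_\alpha$. For $\mathcal{N}_\alpha$, the quantity $l^\alpha(s) = \murec{z}{p_{\alpha,s}(z)\diverge}$ is unbounded iff $p_\alpha$ is total, so in that case the expansionary clause $l^\alpha(s) > \max\{t < s \mid f_t \supseteq \alpha\concat 0\}$ is met infinitely often, forcing $\alpha\concat 0 \subseteq f$, while in the partial case $l^\alpha$ is bounded and the clause fails at every sufficiently late visit, forcing $\alpha\concat 1 \subseteq f$. The $\mathcal{R}_\alpha$ case is parallel, with $l^\alpha(s) = \card{\REset(A_t)[t]{\alpha}}$ at $t = \Lstg{\alpha}{s}$ playing the role of length-of-convergence. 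For $\mathcal{P}_\alpha$, if $W_\alpha \cap A \neq \eset$ then once a common witness appears in both sides we have $f_s(\alpha)=0$ at every later visit, while if $W_\alpha \subseteq \overline{A}$ is finite then $f_s(\alpha)=1$ once $W_{\alpha,s}$ has stabilized.

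The principal obstacle is ruling out the remaining case for $\mathcal{P}_\alpha$: that $W_\alpha$ is infinite yet disjoint from $A$. Since $f$ is defined in terms of these outcomes, we must show the construction actually forces one of the two listed outcomes along the true path. The key step is to bound $\rstrn{\alpha}{s} = \max_{\beta \subset \alpha} \rstrn[\beta]{\alpha}{s}$ in the limit. Each $\beta \subset \alpha$ with $\beta \subseteq f$ contributes only $u^\beta_s(\lh{\alpha}-\lh{\beta})$; on the true path this use stabilizes because the top $\lh{\alpha}-\lh{\beta}$ residents of $\REset(A)[\cdot]{\beta}$ eventually settle down together with $A$ on their uses (either because $f(\beta)=1$ and $l^\beta$ is bounded, or because $f(\beta)=0$ and $l^\beta(s)$ eventually exceeds $\lh{\alpha}-\lh{\beta}$ with the corresponding witnesses being permanent). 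Nodes $\beta$ with $\beta \not\subseteq f$ but $\beta \subset \alpha$ are visited only finitely often after $s_0$ by part (ii) of the induction hypothesis applied to the strings branching off $f$ at heights $<n$, so their contribution to $\rstrn{\alpha}{s}$ is eventually constant as well. Once $\rstrn{\alpha}{s}$ stabilizes, infinitely many elements of the infinite set $W_\alpha$ become available to $\mathcal{P}_\alpha$, and at the next visit of $\alpha$ the module places such an $x$ at the largest allowed node $\nu\concat 0 \subset \alpha$; by the inductive analysis of the $\mathcal{N}_\nu$ modules above, $x$ eventually reaches $A$, contradicting $W_\alpha \cap A = \eset$.

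Having verified in each case that $f_s(\alpha) = f(\alpha)$ at infinitely many visits while $f_s(\alpha) \geq f(\alpha)$ at cofinitely many visits, clauses (i) and (ii) for $\alpha\concat f(\alpha)$ follow: the string $\alpha\concat f(\alpha)$ is visited infinitely often, and any stage with $f_s <_L \alpha\concat f(\alpha)$ either has $f_s <_L \alpha$ (finite by IH) or has $\alpha \subseteq f_s$ with $f_s(\alpha) < f(\alpha)$ (finite by the case analysis). This closes the induction and yields $f = \liminf_s f_s$.
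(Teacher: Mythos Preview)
Your proposal attempts far more than the paper does for this lemma.  In the paper, Lemma~\ref{lem:lim-inf} is deliberately lightweight: it is disposed of in one sentence as ``straightforward inspection of the modules'', with the explicit caveat that \( f(n) \) is understood to be defined only when \( f_s(n) \) is defined for infinitely many \( s \).  In other words, the paper proves only that the outcome definitions for \( \mathcal{N}_\alpha, \mathcal{P}_\alpha, \mathcal{R}_\alpha \) are consistent with the \( \liminf \) of the stagewise guesses, leaving open at this point whether \( f \) is total.  Totality of \( f \), finiteness of restraint, permanence of \( \mathcal{R}_\beta \)-witnesses, and satisfaction of \( \mathcal{P}_\alpha \) are all deferred to the separate Lemmas~\ref{lem:positive-finite}, \ref{lem:total-function}, \ref{lem:free-above}, \ref{lem:none-above}, \ref{lem:finite-restraint}, and \ref{lem:negative-met}.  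You have folded all of that work into a single simultaneous induction on \( \lh{\alpha} \).

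That single induction, as written, does not close.  The difficulty is in your treatment of \( \mathcal{P}_\alpha \): to rule out the case that \( \REset{\alpha} \) is infinite and disjoint from \( A \), you need \( \rstrn{\alpha}{s} \) to stabilise, and for that you assert that the top \( \lh{\alpha}-\lh{\beta} \) residents of \( \REset(A)[\cdot]{\beta} \) ``eventually settle down together with \( A \) on their uses''.  But the permanence of those witnesses depends on which balls reach \( A \), and those balls are emitted by positive modules \( \mathcal{P}_\gamma \) for \( \gamma \) extending \( \alpha\concat 0 \) at arbitrarily large heights---nodes not covered by your level-\( n \) inductive hypothesis.  The paper breaks this circularity not by induction on height but by the global ball-tracking Lemmas~\ref{lem:free-above} and \ref{lem:none-above}, which establish that infinitely often no ball currently above \( \alpha \) will ever pass through \( \alpha \) without being reset; only with that in hand does the paper's inductive proof of Lemma~\ref{lem:finite-restraint} go through.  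Your argument omits this machinery entirely.  (A minor separate point: your remark about ``nodes \( \beta \) with \( \beta \not\subseteq f \) but \( \beta \subset \alpha \)'' is vacuous, since \( \alpha \subseteq f \) forces every \( \beta \subset \alpha \) onto \( f \).)
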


Recall that we write \( f_s(\alpha) \) to denote \( f_s(\lh{\alpha}) \) provided \( f_s \supseteq \alpha \).  In our construction, Lemma \ref{lem:lim-inf} can be verified by straightforward inspection of the modules where we understand \( f(n) \) only to be defined if \( f_s(n) \) is defined for infinitely many \( s \).  We will later show that \( f \) is a total function but first we observe three important properties of our construction. 

\begin{lemma}\label{lem:visitation-and-motion}
\noindent

	\begin{enumerate}
		\item If \( \mathcal{Q}_\alpha \) is visited at stage \( s \), there are no balls at any \( \beta \subset \alpha \) not in \( A \).
		\item If \( f_s \supseteq \curnode{x}{s-1} \) then \( \curnode{x}{s} \subsetneq \curnode{x}{s-1} \), provided \(\curnode{x}{s-1} \) and \(\curnode{x}{s} \) are defined.
		\item If \( \alpha \) is on the true path, then there is a stage \( s \) after which \( \mathcal{Q}_\alpha \) is never reset.
	\end{enumerate}
\end{lemma}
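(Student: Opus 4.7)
My plan for this three-part lemma is largely bookkeeping: each clause essentially unpacks a rule stated in the preceding description of the construction, so no deep combinatorial work is required. I would treat the three parts separately and in order.

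For part~(1) I would proceed by induction on $\lh{\alpha}$. The base case $\alpha = \estr$ is vacuous. For the inductive step, recall the visitation rule: $\mathcal{Q}_{\beta\concat i}$ receives attention at stage $s$ only if, at that moment in the stage, $\mathcal{Q}_\beta$ has just been executed without enumerating any ball onto the tree and no ball currently occupies any $\beta' \subseteq \beta\concat i$. Applying the inductive hypothesis to $\beta$ when $\beta$ was executed at stage $s$ gives no ball at any proper initial segment of $\beta$ outside of $A$; combined with the explicit check built into visitation, we conclude no ball sits at any $\gamma \subsetneq \beta \concat i = \alpha$ except those that have reached $\estr$ and entered $A$.

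For part~(2), I would first use the invariant, explicitly noted in the construction, that balls only rest at nodes of the form $\beta \concat 0$ where $\mathcal{Q}_\beta$ implements $\mathcal{N}_\beta$; hence $\curnode{x}{s-1} = \beta \concat 0$ for some such $\beta$. The movement rule states that whenever $f_s$ extends $\beta\concat 0$, the ball is moved to the $\subset$-maximal $\nu\concat 0 \subsetneq \beta$ with $\mathcal{Q}_\nu$ implementing $\mathcal{N}_\nu$, or else directly to $\estr$ (i.e., into $A$) if no such $\nu$ exists. In both cases $\curnode{x}{s}$ is a proper initial segment of $\beta \concat 0 = \curnode{x}{s-1}$, which is exactly the conclusion.

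For part~(3), I would appeal to Lemma~\ref{lem:lim-inf}. By the definition of the true path as $f = \liminf_s f_s$, for every proper initial segment $\gamma \subsetneq \alpha$ we have $\alpha(\lh{\gamma}) = \liminf_s f_s(\lh{\gamma})$, so the set of stages $s$ at which $\gamma \subset f_s$ and $f_s(\lh\gamma) < \alpha(\lh\gamma)$ is finite. Since $\mathcal{Q}_\alpha$ is reset at stage $s$ precisely when $f_s <_L \alpha$, which requires some such $\gamma \subsetneq \alpha$ to witness $f_s(\lh\gamma) < \alpha(\lh\gamma)$, taking the maximum of the finitely many offending stages across the finitely many $\gamma \subsetneq \alpha$ yields a stage after which $\mathcal{Q}_\alpha$ is never reset. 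I do not expect any real obstacle; the only care needed is in lining up the definitions of \emph{visited}, \emph{expansionary}, and \emph{reset} with the movement rule for balls, and in part~(2) remembering that the $\estr$-destination case is not an exception but confirms the strict-subset conclusion.
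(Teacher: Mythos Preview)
Your proposal is correct and takes essentially the same approach as the paper, which simply states that parts (1) and (2) follow directly from the action of the tree and part (3) follows immediately from Lemma~\ref{lem:lim-inf}. Your write-up merely unpacks in detail the visitation rule, the ball-movement rule, and the $\liminf$ characterization of the true path that the paper invokes implicitly.
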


The first two follow directly from the action of the tree, and the third property follows immediately from Lemma \ref{lem:lim-inf}.  We now show that  the true path is total.  We first note that no individual ball ever stalls on the true path.

\begin{lemma}\label{lem:motion-forward}
	If \( \curnode{x}{s}=\alpha \) and \( f_t \supseteq \alpha \) where \( t >s \), then either \( \mathcal{Q}_\alpha \) was reset between stage \( t \) and \( s \) or \( x \in A_t \). 
\end{lemma}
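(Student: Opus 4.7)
The plan is to prove the lemma by induction on \( \lh{\alpha} \). The base case \( \alpha = \lambda \) is immediate: \( \curnode{x}{s} = \lambda \) forces \( x \in A_s \subseteq A_t \).

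For the inductive step, write \( \alpha = \nu \cat 0 \), as every ball occupies such a node. Assume \( \mathcal{Q}_\alpha \) is not reset in \( (s, t] \). A preliminary observation is that no prefix \( \alpha' \subseteq \alpha \) is reset in \( (s, t] \) either: if \( f_{t'} <_L \alpha' \) then, since \( \alpha' \) is a prefix of \( \alpha \), the lexicographic definition gives \( f_{t'} <_L \alpha \) as well. Let \( t_1 \) be the least stage in \( (s, t] \) with \( f_{t_1} \supseteq \alpha \); this exists because \( f_t \supseteq \alpha \). For \( t' \in (s, t_1) \), the approximation \( f_{t'} \) is neither \( <_L \alpha \) (no reset) nor extends \( \alpha \) (by minimality of \( t_1 \)), so the release rule does not fire and \( \curnode{x}{t_1 - 1} = \alpha \).

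At stage \( t_1 \), because \( f_{t_1} \) extends \( \curnode{x}{t_1-1} = \alpha \), the release rule moves \( x \) either directly into \( A \) (in which case \( x \in A_{t_1} \subseteq A_t \) and we are done) or to the \( \subset \)-maximal node \( \alpha' = \nu' \cat 0 \subsetneq \alpha \) with \( \mathcal{Q}_{\nu'} \) implementing \( \mathcal{N}_{\nu'} \). In the latter case, the tuple \( (x, t_1, t, \alpha') \) satisfies the inductive hypothesis: \( \lh{\alpha'} < \lh{\alpha} \), \( f_t \supseteq \alpha \supseteq \alpha' \), and \( \mathcal{Q}_{\alpha'} \) is not reset in \( (t_1, t] \) by the prefix observation. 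The induction then gives \( x \in A_t \).

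The main obstacle is the boundary scenario in which \( t_1 = t \) and the single release at stage \( t \) leaves \( x \) at some \( \alpha' \subsetneq \alpha \) with \( \alpha' \neq \lambda \), since the induction nominally requires \( t > t_1 \). Here one must appeal to the iterative nature of the release rule within a single stage: once \( x \) occupies \( \alpha' \subseteq \alpha \subseteq f_t \), the condition triggering the release still holds, so \( x \) is released again, cascading down through the chain of \( \mathcal{N} \)-positions below \( \alpha \) until it reaches \( \lambda \). The prefix observation again guarantees that every intermediate \( \mathcal{N} \)-position in this cascade is a prefix of \( \alpha \), hence of \( f_t \), so the cascade runs all the way to \( A \). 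This cascade within a single stage is the only nontrivial point in the argument; everything else is bookkeeping.
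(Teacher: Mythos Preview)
Your approach---induction on \( \lh{\alpha} \), with the key observation that resetting any prefix of \( \alpha \) forces a reset of \( \alpha \) itself---is exactly the paper's argument, just spelled out in considerably more detail than the paper's two sentences.

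One caveat on the boundary case you single out: your within-stage cascade claim is not clearly supported by the letter of the release rule, which is phrased as firing when \( f_s \) extends \( \curnode{x}{s-1} \) (the position at the end of the \emph{previous} stage), not the updated within-stage position. Read literally, the ball moves exactly one \( \mathcal{N} \)-level per stage, so at \( t_1 = t \) it could land at some \( \alpha' \neq \lambda \) and the conclusion \( x \in A_t \) would fail. The paper's proof is completely silent on this point, so you are no less rigorous than the source; and in both applications of the lemma (Lemmas~\ref{lem:positive-finite} and~\ref{lem:total-function}) one in fact has \( f_t \supsetneq \alpha \), so that \( \mathcal{Q}_\alpha \) is visited at stage \( t \) and the stopping condition already guarantees no ball not in \( A \) sits at or below \( \alpha \) at the end of stage \( t-1 \).
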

\begin{proof}
	The only way \( x \) can leave \( \alpha \) is if \( \alpha \) is reset or \( x \) moves to a predecessor of \( \alpha \).  By induction, \( x \) must either reach \( A \) or \( \alpha \) is reset when its predecessor is reset. 
\end{proof}

We need to know that no positive requirement emits so many balls that \( f_s \) cannot extend a particular node.

\begin{lemma}\label{lem:positive-finite}
	If \( \alpha \subseteq f \) then \( \mathcal{P}_\alpha \) places at most finitely many balls on the tree.  Furthermore, if \( \alpha \subseteq f \) and \( \beta \supset \alpha \), then only finitely many balls placed on the tree by \( \mathcal{P}_\beta \) travel down the tree to reach \( \alpha \).
\end{lemma}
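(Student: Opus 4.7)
The plan combines Lemma \ref{lem:visitation-and-motion}(3), which yields a stage $s_0$ past which $\alpha$ is never reset, with the implicit convention that each $\mathcal{P}_\gamma$ enumerates no further balls onto the tree once its simplicity requirement has been witnessed.

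For the first claim, fix $s_0$ as above and consider any ball placed by $\mathcal{P}_\alpha$ after $s_0$; it sits at some $\nu\cat 0 \subset \alpha$, and this node cannot subsequently be reset, since $f_t <_L \nu\cat 0 \subset \alpha$ would give $f_t <_L \alpha$, contradicting the choice of $s_0$. By Lemma \ref{lem:motion-forward} every such ball descends all the way into $A$. If $f(\alpha)=1$, then $W_\alpha$ is finite and disjoint from $A$; since every ball placed by $\mathcal{P}_\alpha$ lies in $W_\alpha$, this descent is impossible and no placement happens after $s_0$. If $f(\alpha)=0$, the first ball placed after $s_0$ enters $A$, satisfies the simplicity requirement, and causes $\mathcal{P}_\alpha$ to fall silent. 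Together with the finitely many placements occurring before $s_0$, only finitely many balls are placed in total.

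For the second claim I split on $\beta$'s relation to $f$. If $\beta \subseteq f$, the first claim applied to $\beta$ suffices. If $\beta \not\subseteq f$, let $l \geq \lh{\alpha}$ be the least length at which $\beta$ and $f$ disagree. When $\beta(l) < f(l)$, the $\liminf$ characterization of $f$ makes $f_s(l) = f(l)$ for cofinitely many $s$, so $\beta$ is visited only finitely often and $\mathcal{P}_\beta$ acts only finitely often. When $\beta(l) > f(l)$, let $x$ be any ball placed by $\mathcal{P}_\beta$ that reaches $\alpha$ at some stage past $s_0$. Lemma \ref{lem:motion-forward} forces $x$ into $A$; since $x \in W_\beta$, this witnesses $W_\beta \cap A \neq \eset$ and satisfies $\mathcal{P}_\beta$'s requirement, after which $\mathcal{P}_\beta$ makes no further placements. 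The receives-attention discipline forces $\mathcal{P}_\beta$ to wait for each ball to clear before placing the next, so at most one ball placed after $s_0$ can reach $\alpha$, and only finitely many do in total.

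The main obstacle is this final subcase. Because $\beta$ strictly to the right of $f$ can be visited infinitely often, $\mathcal{P}_\beta$ may a priori deposit infinitely many fresh balls at positions $\nu\cat 0 \supseteq (f\restr l)\cat 1$ off the true path, and some of these balls can in principle descend past the branching node, cross onto the true-path portion of the tree, and eventually reach $A$. The resolution is that any such escape instantly satisfies $\mathcal{P}_\beta$ and shuts the module down, so no second escape can occur; carrying this through cleanly requires being explicit that each positive module's action is implicitly conditioned on its requirement remaining unmet and that the sequential ``wait for the previous ball to clear'' behaviour is indeed forced by the receives-attention rule.
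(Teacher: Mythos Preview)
Your argument is correct, and it uses the same essential mechanism as the paper: once a ball from a positive module reaches a position that is never again reset, Lemma~\ref{lem:motion-forward} drives it into \(A\), after which the module stops emitting.  The difference is purely in packaging.

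For the first claim you split on \(f(\alpha)\), but both of your cases reduce to the same fact: a ball placed after \(s_0\) eventually enters \(A\), so the module halts.  The paper compresses this into a single contradiction: assume infinitely many emissions, pick one after the no-reset stage, observe that a \emph{later} emission witnesses a stage \(t>s\) with \(f_t\supseteq\alpha\), and invoke Lemma~\ref{lem:motion-forward}.  Note that the paper extracts the needed revisit of \(\alpha\) directly from the infinitude hypothesis rather than from \(\alpha\subseteq f\) via \(\liminf\); either source is legitimate here, so there is no circularity in your version.

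For the second claim you trisect on \(\beta\)'s relation to \(f\), whereas the paper simply says ``the same argument applied to the stages at which \(x\) reaches \(\alpha\).''  That one line already covers your three cases uniformly: if infinitely many balls from \(\mathcal{P}_\beta\) reach \(\alpha\), then some such ball \(x\) does so after \(\alpha\)'s last reset, and the very fact that \(\mathcal{P}_\beta\) later emits \emph{another} ball (which it must, to send infinitely many down to \(\alpha\)) supplies a stage \(t\) with \(f_t\supseteq\beta\supset\alpha\supseteq\curnode{x}{s}\), forcing \(x\in A_t\) and silencing \(\mathcal{P}_\beta\).  This argument is indifferent to whether \(\beta\) lies on, left of, or right of \(f\), so your cases 2a and 2b are not wrong but are unnecessary.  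What your decomposition does buy is an explicit picture of each scenario---in particular, your discussion of the ``\(\beta\) to the right'' subcase spells out why a ball that escapes below the branch point cannot be followed by a second escapee, which the paper's one-line gesture leaves to the reader.
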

\begin{proof}
	If \( \alpha \) places infinitely many balls on the tree, then there is a stage \( s \)  and a ball \( x  \in \REset[s]{\alpha} \) so that \( \mathcal{P}_\alpha \) emits \( x \) at stage \( s \) and \( \alpha \) is no longer reset after stage \(s\).  Furthermore, \( \mathcal{P}_\alpha \) must emit another ball later, so there is a stage \( t > s \) with \( f_t \supseteq \alpha \). Hence,  \( x \in A_t \) by \ref{lem:motion-forward}.  By the action of the \( \mathcal{P}_\alpha  \) module,  however, \( \mathcal{P}_\alpha  \) ceases emitting once \( x \in A_t \isect \REset[t]{\alpha}\not=\eset \). Thus,  only finitely many balls are placed on the tree by \( \mathcal{P}_\alpha \).  The second half of the claim follows by the same argument applied to the stages at which \( x \) reaches \( \alpha \). 
\end{proof}

\begin{lemma}\label{lem:total-function}
	The function \( f \) is total, and every \( \alpha \subset f \) is visited infinitely many times.
\end{lemma}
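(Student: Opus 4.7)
The plan is to proceed by induction on $n$, proving that for every $n$ the node $\alpha = f\restr{n}$ is visited infinitely often. Totality of $f$ then follows since, by the paper's convention, $f(n)$ is defined precisely when $\mathcal{Q}_{f\restr{n+1}}$ is visited infinitely often. The base case $n = 0$ is immediate, as $\mathcal{Q}_\lambda$ is executed at every stage.

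For the inductive step, suppose $\alpha = f\restr{n}$ is visited infinitely often. Then $\mathcal{Q}_\alpha$ outputs a value in $\{0,1\}$ at each visit, so $f(\alpha) = \liminf_s f_s(\alpha)$ is well defined; set $i = f(\alpha)$. By the $\liminf$ convention, $f_s(\alpha) = i$ at infinitely many stages. Using Lemma~\ref{lem:visitation-and-motion}(3), fix a stage $s_0$ after which no $\beta \subseteq \alpha$ is ever reset, and using Lemma~\ref{lem:positive-finite}, enlarge $s_0$ so that every $\mathcal{P}_\beta$ with $\beta \subseteq \alpha\concat i$ (finitely many true-path nodes) has already placed every ball it will ever place. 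At any stage $s \geq s_0$ with $f_s \supseteq \alpha$ and $f_s(\alpha) = i$, two of the three visit conditions for $\mathcal{Q}_{\alpha\concat i}$ hold automatically: $\mathcal{Q}_\alpha$ emits no ball at $s$, and no ball occupies any $\beta' \subseteq \alpha$, the latter by the visit condition enforced when $\alpha$ was itself visited at $s$.

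The remaining concern, which is also the main obstacle, is a possible ball at $\alpha\concat i$ itself. Such a ball can occur only when $\alpha\concat i = \alpha\concat 0$ and $\mathcal{Q}_\alpha$ implements $\mathcal{N}_\alpha$, since otherwise $\alpha\concat i$ is not of the form $\nu\concat 0$ for a negative-requirement node $\nu$ and cannot hold a ball. In the $\mathcal{N}_\alpha$ case, the defining condition of $f_s(\alpha) = 0$ guarantees that for the stage $t$ of the most recent placement at $\alpha\concat 0$ we have $p_{\alpha,s}(t)\conv$; hence the action of $\mathcal{N}_\alpha$ at stage $s$ releases every ball currently at $\alpha\concat 0$, sending it to the next lower $\mathcal{N}$-node or into $A$. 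Crucially, no fresh ball can replace the released one during the same stage, since any placement requires a visit to some $\mathcal{P}_\gamma$ with $\gamma \supseteq \alpha\concat 0$, and such a visit itself requires the chain to have already passed through $\alpha\concat 0$. Thus $\alpha\concat 0$ is empty at the moment the visit condition for $\mathcal{Q}_{\alpha\concat 0}$ is checked, so $\mathcal{Q}_{\alpha\concat 0}$ receives attention and the induction closes. The principal subtlety is the precise synchronization of release, visit-check, and placement within a single stage; once that ordering is pinned down, the remaining arguments are routine.
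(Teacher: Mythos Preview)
Your induction is a natural framing, but the within-stage timing argument in the final paragraph does not go through. You correctly observe that when $f_s(\alpha)=0$ the ball $x$ sitting at $\alpha\concat 0$ is released; however, the motion rule sends $x$ to the $\subset$-maximal $\nu\concat 0\subset\alpha$ with $\mathcal{Q}_\nu$ a negative requirement (or into $A$ only if no such $\nu$ exists). In the generic case where such a lower $\mathcal{N}$-node exists, we get $\curnode{x}{s}=\nu\concat 0\subsetneq\alpha$. The visit condition for $\mathcal{Q}_{\alpha\concat 0}$ requires that no ball occupy \emph{any} $\beta'\subseteq\alpha\concat 0$, and $\nu\concat 0$ is such a $\beta'$. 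So the very act of releasing the ball destroys the premise ``no ball occupies any $\beta'\subseteq\alpha$'' that you had secured from $\alpha$'s own visit condition, and the visit to $\alpha\concat 0$ is still blocked at stage $s$. Your claim that ``$\alpha\concat 0$ is empty at the moment the visit condition is checked'' is true of the node $\alpha\concat 0$ itself but not of the full chain below it; balls move down only one $\mathcal{N}$-level per stage (this is exactly Lemma~\ref{lem:visitation-and-motion}(2)), so no cascading within stage $s$ saves the argument.

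The paper sidesteps the stage-internal timing entirely by arguing by contradiction and invoking Lemma~\ref{lem:motion-forward}: if the visit to $\alpha$ is blocked at infinitely many stages, the blocking ball at any one such stage must already be in $A$ by the next such stage (since $\alpha$ is not reset), yet no new ball can appear there because (i) no $\mathcal{P}_\beta$ with $\beta\subseteq\alpha$ emits after $s_0$, and (ii) no node properly extending $\alpha$ is visited between consecutive stages with $f_s=\alpha$, so nothing can trickle down from above. That across-stages argument is what your within-stage release argument is missing; to repair your approach you would essentially have to reproduce Lemma~\ref{lem:motion-forward} to show the released ball drains all the way to $A$ over several stages before the next blocking ball can arrive.
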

\begin{proof}
	Suppose not; then \( f=\alpha \) for some \( \alpha \in \bstrs \) and \( \set{s}{f_s=\alpha} \) is infinite.  Without loss of generality, we may assume that \( f_s \) never properly extends \( \alpha \) for \( s > 0 \). By construction, this occurs for large \( s \) only  if every time \( f_s=\alpha \), there is some \( x \) with \( \curnode{x}{s} =\alpha^{-} \).  Pick \( s_0 \) large enough so that \( \alpha \) is never reset after \( s_0 \), and no \( \mathcal{P}_\beta \) with \( \beta \subseteq \alpha \) places any ball on the tree after \( s_0 \).  Now, pick some \(t\) and \(s_1\) where \( t> s_1 > s_0 \) such that \( f_t=f_{s_1} = \alpha \) and no nodes above \(\alpha\) are visited between stages $t$ and $s_1$.     If \( x \) is such that \( \curnode{x}{t}=\alpha^{-} \),  the ball \(x\) cannot have trickled down from above  nor can it have been placed at \( \alpha^{-} \) after \( s_0 \).  Hence, \( \curnode{x}{s_1} = \alpha^{-} \), violating Lemma \ref{lem:motion-forward}, a contradiction.
\end{proof}

Before we can conclude that \( \mathcal{P}_\alpha \) is satisfied, we first must argue that \( \mathcal{R}_\alpha \)  imposes only  finitary restraint on the true path.  We need two further lemmas.  The first  shows that the only way ball \( x \) can pass by ball \( y \) is if \( y  \) is placed on the tree first and later \( x \) is added with \( y <_L x \).

\begin{lemma}\label{lem:free-above}
	Suppose \( x \) is placed on the tree at stage \( s \) and 
	\( \curnode{y}{s} \not\supset\curnode{x}{s} \).  If \( x \) remains on the tree until \( s' > s \) and \mbox{\( \curnode{y}{s'} \supseteq \curnode{x}{s'} \),} then \(  \curnode{y}{s} <_L \curnode{x}{s} \). 
\end{lemma}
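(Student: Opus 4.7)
The plan is to derive $\curnode{y}{s} <_L \curnode{x}{s}$ from the configuration at stage $s$ alone, using two ingredients: Lemma~\ref{lem:visitation-and-motion}(1), which constrains where $y$ can sit at the moment $\mathcal{P}_\alpha$ acts, and the standing invariant that every ball on the tree sits at a node $\leq_L f_s$ (since any strict violation $f_t <_L \curnode{y}{t-1}$ would have triggered a reset of $y$).  The later stage $s'$ with $\curnode{y}{s'} \supseteq \curnode{x}{s'}$ does not enter the algebra substantively; it serves mainly to guarantee that $\curnode{y}{s}$ is a meaningful object, i.e.\ that $y$ is genuinely on the tree at stage $s$.

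First I would fix $\alpha$ to be the $\mathcal{P}$-node acting at stage $s$, so that $\curnode{x}{s} = \beta\concat 0$ is the $\subseteq$-maximal negative stopping node strictly below $\alpha$.  Lemma~\ref{lem:visitation-and-motion}(1) forbids any ball outside $A_s$ from sitting at a proper prefix of $\alpha$, and because balls rest only at nodes of the form $\nu\concat 0$ carrying negative requirements while $\alpha$ itself carries $\mathcal{P}_\alpha$, no ball sits at $\alpha$ either.  Hence $\curnode{y}{s}$ cannot be a weak $\subseteq$-prefix of $\curnode{x}{s}$; combined with the hypothesis $\curnode{y}{s} \not\supset \curnode{x}{s}$, this forces $\curnode{y}{s}$ and $\curnode{x}{s}$ to be $\subseteq$-incomparable.

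Second, I would rule out $\curnode{y}{s} >_L \curnode{x}{s}$.  Because $\mathcal{P}_\alpha$ enumerates a ball at stage $s$ and thereby halts further visitation, $\alpha$ is the $\subseteq$-maximal visited node, and so $f_s = \alpha \concat f_s(\alpha) \supseteq \curnode{x}{s}$.  Supposing $\curnode{y}{s} >_L \curnode{x}{s}$ and letting $\gamma$ be their longest common prefix, $\curnode{x}{s}$ and therefore $f_s$ both extend $\gamma\concat 0$, while $\curnode{y}{s}$ extends $\gamma\concat 1$; comparing at position $|\gamma|$ yields $f_s <_L \curnode{y}{s}$ strictly, contradicting the $\leq_L f_s$ invariant for $y$.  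The main delicate point will be verifying that this invariant genuinely holds at the end of stage $s$: the order of actions within a stage matters, but once $\mathcal{P}_\alpha$ enumerates $x$ the stage terminates with $f_s$ fixed at $\alpha \concat f_s(\alpha)$, and any reset of $y$ caused by a leftward shift of the approximation would already have executed earlier in the stage, so the tree state is self-consistent when the invariant is invoked.
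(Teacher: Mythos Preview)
Your approach is correct and genuinely different from the paper's.  The paper argues dynamically: it locates the least stage \( t+1 > s \) at which \( \curnode{y}{t+1} \supseteq \curnode{x}{t+1} \), shows that at stage \( t \) the two balls must have been \( <_L \)-incomparable, uses the motion of \( x \) at \( t+1 \) to force \( \curnode{y}{t} <_L \curnode{x}{t} \), and then pulls this back to stage \( s \).  You instead argue statically at stage \( s \) using the invariant that no ball ever lies strictly to the right of \( f_s \), together with \( \curnode{x}{s} \subset \alpha \subset f_s \).  Your route is shorter and in fact establishes a stronger statement: the hypothesis about \( s' \) is never used beyond ensuring \( \curnode{y}{s} \) is defined.

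One point in your Step~1 needs repair.  You assert that \( \alpha \) cannot be a resting node because \( \alpha \) carries \( \mathcal{P}_\alpha \), but this is false: resting nodes are exactly those of the form \( \nu\concat 0 \) with \( \nu \) an \( \mathcal{N} \)-node, hence of length \( 3e+1 \), which are precisely \( \mathcal{P} \)-nodes.  So if \( \alpha \) happens to extend its predecessor by \( 0 \), then \( \alpha \) \emph{is} a resting node.  The correct reason no ball other than \( x \) sits at any \( \beta \subseteq \alpha \) at the end of stage \( s \) is the visitation condition itself: for \( \mathcal{Q}_\alpha \) to receive attention, the construction requires that no ball occupy any \( \beta' \subseteq \alpha \), and since \( \mathcal{P}_\alpha \) places only \( x \) (at \( \curnode{x}{s} \subsetneq \alpha \)) and then terminates the stage, \( x \) is the unique ball at or below \( \alpha \) when the stage ends.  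With this fix, your argument goes through cleanly.
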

\begin{proof}
	Since the construction guarantees that the nodes above the position of \( x \)  are not visited, \( y \) cannot be added to the tree above \( x \).  Since balls move downward on the tree, if \( t+1 > s \) is the least stage at which \( \curnode{y}{t+1} \supseteq \curnode{x}{t+1} \), we have either \( \curnode{x}{t} <_L \curnode{y}{t} \) or \( \curnode{y}{t} <_L \curnode{x}{t}  \).  However, the motion of \( x \) at stage \( t \) requires that \( f_{t+1} \supseteq \curnode{x}{t} \). So, if \( \curnode{x}{t} <_L \curnode{y}{t} \), then \( y \) is removed from the tree at stage \( t+1 \), contradicting our assumption that \( \curnode{y}{t+1} \supseteq \curnode{x}{t+1} \).  Therefore, \( \curnode{y}{t} <_L \curnode{x}{t}  \).  Provided that the balls remain on the tree between \( s \) and \( t \), we have \( \curnode{y}{t} \subseteq \curnode{y}{s} \) and \( \curnode{x}{s} \supseteq \curnode{x}{t} \).  If \( y \) was added to the tree to the left of the location of \( x \) after \(x\) was placed on the tree, then \( x \) would have been removed from the tree.  So, \( \curnode{y}{s} <_L \curnode{x}{s}\).
\end{proof}

We can now infer that if \( \alpha \) is on the true path, then infinitely often the only balls above \( \alpha \) are those that will never move below \( \alpha \) without being reset.  Let \[
	B_t = \set{x}{\curnode{x}{t} \supseteq \alpha \land \exists[t'>t]\left(\curnode{x}{t'}\subseteq \alpha\right)}.
	\]
Let \( \hat{B_t}  \) be the set of elements in \( B_t \) that reach \( \alpha \) without being reset between stages $t$ and $t'$ in the definition of \(B_t\).

\begin{lemma}\label{lem:none-above}
	If \( \alpha \subset f \) then for every \( s \)  there is a \( t \ge s\) such that \(f_t\supset\alpha\) and \( \hat{B_t}=\eset.  \)  
\end{lemma}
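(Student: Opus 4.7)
The plan is to combine Lemmas \ref{lem:visitation-and-motion}(3) and \ref{lem:positive-finite} with the visit-chain clearance condition. Given $s$, pick $s_0 \geq s$ beyond which $\alpha$ is never reset and beyond which every $\mathcal{P}_\beta$ with $\beta \subseteq \alpha$ on the true path $f$ has ceased emitting balls (the latter by applying Lemma \ref{lem:positive-finite} to the finitely many such $\beta$). The first step is to show that any $x \in \hat{B_t}$ for $t \geq s_0$ is placed by some $\mathcal{P}_\beta$ with $\beta \supsetneq \alpha$ on $f$, and that $\curnode{x}{t}$ lies on $f$: a ball placed by a $\mathcal{P}_\beta$ with $\beta \subseteq \alpha$ sits at or below $\alpha$ and cannot be in $\hat{B_t}$; a ball from $\beta$ strictly to the right of $f$ would be reset before reaching $\alpha$; and for the ball to undergo the expansions required to descend to $\alpha$, each intermediate waiting position $\nu\concat 0$ must satisfy $\nu\concat 0 \subset f$ (else $\nu$ has only finitely many expansionary stages and the ball would stall forever).

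I then split into two cases. If only finitely many distinct balls ever appear in $\hat{B_t}$ for $t \geq s_0$ with $f_t \supset \alpha$, then each reaches $\alpha$ by some finite stage; by Lemma \ref{lem:total-function} infinitely many $t$ with $f_t \supset \alpha$ lie beyond this point, and any such $t$ gives $\hat{B_t} = \emptyset$. In the infinite case, enumerate the balls by placement stage as $z_1, z_2, \ldots$, placed by $\mathcal{P}_{\beta_k}$ on $f$ with $\beta_k \supsetneq \alpha$; Lemma \ref{lem:positive-finite} forces $|\beta_k| \to \infty$. The key observation is that at the placement stage $s_{k+1}$ of $z_{k+1}$ the visit chain reaches $\beta_{k+1}$, which requires every ball at a prefix of $\beta_{k+1}$ (in particular, at or below $\alpha$) to already lie in $A$; this clears every earlier $z_j$ whose placement node $\gamma_j$ satisfies $\gamma_j \subseteq \beta_{k+1}$.

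The main obstacle is ruling out earlier balls $z_j$ with $\gamma_j \supsetneq \beta_{k+1}$ that may still be in transit strictly above $\beta_{k+1}$ at $s_{k+1}$. I expect to handle these by choosing $k$ large enough that no such transit balls remain: since only finitely many balls lie on the tree at any given stage and each $\mathcal{P}_{\beta'}$ with $\beta' \supsetneq \beta_{k+1}$ on $f$ contributes only finitely many balls by Lemma \ref{lem:positive-finite}, iterating this observation alongside the infinitely many visits to $\mathcal{Q}_\alpha$ (Lemma \ref{lem:total-function}) should locate a stage $t$ with $f_t \supset \alpha$ at which all committed balls above $\alpha$ have cleared, yielding $\hat{B_t} = \emptyset$ as required.
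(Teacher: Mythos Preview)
Your approach diverges from the paper's and contains a genuine error in the first step. You claim that any $x \in \hat{B_t}$ (for large $t$) must have been placed by some $\mathcal{P}_\beta$ with $\beta \supsetneq \alpha$ lying on the true path $f$, on the grounds that a ball from $\beta$ strictly to the right of $f$ ``would be reset before reaching $\alpha$'' and that off-path waiting positions would cause the ball to ``stall forever.'' Neither justification is correct. A ball placed by an off-path $\mathcal{P}_\beta$ lands at the largest $\nu\concat 0 \subset \beta$ with $\mathcal{Q}_\nu$ negative; this $\nu\concat 0$ may well be a prefix of $f$ (hence never again to the right of $f_s$ after some stage), and the ball can then descend to $\alpha$ without being reset. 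Even when an intermediate $\nu\concat 0$ lies off the true path, $\nu$ need only be expansionary \emph{once} after the ball arrives to send it further down --- it does not need infinitely many expansionary stages. So the reduction to true-path emitters fails, and with it your enumeration $z_1, z_2, \ldots$ and the appeal to $\lvert \beta_k \rvert \to \infty$.

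Your closing paragraph acknowledges that you have not actually dealt with balls placed high above $\beta_{k+1}$ and still in transit; ``iterating this observation'' is not an argument, and the obstacle is real precisely because you cannot control how many off-path nodes contribute balls. The paper avoids all of this by using Lemma~\ref{lem:free-above} (the non-crossing lemma) directly: pick $x \in \hat{B_s}$ whose position is leftmost and, among those, $\supset$-maximal; let $t$ be the stage $x$ reaches $\alpha$. Any $y \in \hat{B_t}$ then sits above $x$ at $t$ but (by maximality) not at $s$, so Lemma~\ref{lem:free-above} forces $\curnode{y}{s} <_L \curnode{x}{s}$, contradicting the leftmost choice of $x$. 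This one-step argument is the key idea you are missing, and it makes the classification of emitters by true-path position entirely unnecessary.
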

\begin{proof}
	Without loss of generality, we may assume \( s \) is so large that \( \alpha \) is never reset after \( s \) and that every positive requirement below \( \alpha \) no longer acts.    Suppose \( \hat{B_s}\not =\eset.  \) Let \( x \in \hat{B_s} \) be such that \(\curnode{x}{s}\) is  minimal in \( \hat{B_s} \) under \( <_L \) and maximal among those elements under \( \supset \).  By definition of \( B_s \), there is a least stage \( t>s \) at which \( \curnode{x}{t} \subseteq \alpha \).  Now, given any \( y \in B_t \), we have \( \curnode{y}{t} \supsetneq \alpha \supset \curnode{x}{t} \), and by maximality of \( x \) under \( \subset \), we know that \( \curnode{y}{s} \not\supset \curnode{x}{s} \). So, by Lemma \ref{lem:free-above}, we know that \(  \curnode{y}{s} <_L \curnode{x}{s} \).  Since neither \( x \) nor \( y \) was reset between stages \( s \) and  \( t \), the element \( y \) is an element in \( \hat{B_s} \) to the left of \( x \), a contradiction.  Hence, \( \hat{B_t}=\eset \). The second part of the claim follows by choosing \( t' \geq t \) to be the stage at which \( x \) enters \( A \) and again applying the previous lemma to show nothing from the right could get above \( x \).
\end{proof}

Recall 	\(\rstrn{\beta}{s} = \max_{\alpha \subset \beta} \rstrn[\alpha]{\beta}{s} \) is the restraint imposed on \(\mathcal{P}_\beta\) by \( \mathcal{R}_\beta \).
\begin{lemma}\label{lem:finite-restraint}
	If \( \alpha \) is on the true path, then \( \mathcal{R}_\alpha \) is satisfied, and if \( \beta \supset \alpha \) is on the true path,  then \( \lim_{s \to\infty} \rstrn[\alpha]{\beta}{s} \) is finite.  Hence,  \( \lim_{s \to\infty} \rstrn{\beta}{s} \) is finite as well.  

\end{lemma}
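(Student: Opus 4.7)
The plan is to prove both claims by simultaneous induction on $\lh\alpha$, with an inner induction on $n$ driving the main case. If $\alpha\concat 1\subseteq f$, then $\alpha\concat 0$ is visited only finitely often, so $\limsup_s l^\alpha(s)<\infty$ and $\mathcal{R}_\alpha$ holds vacuously; any $\beta\supset\alpha$ on the true path extends $\alpha\concat 1$ and hence fails $\beta\supseteq\alpha\concat 0$, giving $\rstrn[\alpha]{\beta}{s}=0$ for every $s$. So assume $\alpha\concat 0\subseteq f$, whence $l^\alpha(s)\to\infty$ and we must show $\card{\REset(A){\alpha}}=\infty$.

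I would carry out an inner induction on $n$, establishing simultaneously $\card{\REset(A){\alpha}}\geq n$ and that $U_n\eqdef\lim_s u^\alpha_s(n)$ is finite, using that $u^\alpha_s(n)$ is non-decreasing in $s$. The base $n=0$ is immediate. For the step, the outer IH furnishes a stage past which every $\mathcal{P}_{\alpha'}$ for $\alpha'\subsetneq\alpha$ on $f$ has stopped acting and every restraint from $\mathcal{R}_{\alpha'}$ has stabilized, while the inner IH provides $u^\alpha_s(n)=U_n$ past that stage. For each $m\leq n+1$ let $\gamma_m\subseteq f$ denote the extension of $\alpha\concat 0$ at depth $m$ below $\alpha$; by Lemma \ref{lem:positive-finite} each $\mathcal{P}_{\gamma_m}$ places only finitely many balls, so collectively only finitely many of them enter $A$. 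Fix $s_1$ past all of these entries and past a stage supplied by Lemma \ref{lem:none-above} at which $\hat B_{s_1}=\eset$.

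Now pick $t\geq s_1$ at which $\alpha\concat 0$ is visited with $l^\alpha(t)\geq n+1$, and set $u=u^\alpha_t(n+1)\leq t$; then $n+1$ witnesses $x_1,\ldots,x_{n+1}$ reside continuously in $\REset(A_{t'})[t']{\alpha}$ for $t'\in[u,t]$. I claim these witnesses persist. A disruption would require some $y\leq u$ to enter $A$ after $t$, placed by $\mathcal{P}_\gamma$ at stage $s_y$ with $y>\rstrn{\gamma}{s_y}\geq\rstrn[\alpha]{\gamma}{s_y}$. The $\gamma_m$ with $m\leq n+1$ are ruled out by the choice of $s_1$. For $\gamma\subseteq f$ at depth $m>n+1$ below $\alpha$, the restraint $\rstrn[\alpha]{\gamma}{s_y}=u^\alpha_{s_y}(m)\geq u^\alpha_{s_y}(n+1)\geq u^\alpha_t(n+1)=u$ by monotonicity for $s_y\geq t$ forces $y>u$. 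For $\gamma$ off $f$ (or extending $\alpha\concat 1$), the reset convention pushes ball-numbers above the ever-growing last reset stage, so only finitely many balls $\leq u$ are ever placed by each such $\gamma$; since there are only finitely many such $\gamma$ at depth $\leq n+1$ below $\alpha$, this contributes only finitely many further balls, absorbable into $s_1$, while balls above $\alpha$ at $s_1$ do not reach $\alpha$ by $\hat B_{s_1}=\eset$. Hence no such $y$ exists, the witnesses persist, and both $\card{\REset(A){\alpha}}\geq n+1$ and $U_{n+1}\leq u$ follow.

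The main obstacle is the circular dependence between restraint stabilization and computation persistence, which the inner induction on $n$ breaks by bootstrapping $U_{n+1}$ from $U_n$, Lemma \ref{lem:positive-finite}, and Lemma \ref{lem:none-above}. Part (ii) then follows: for $\beta\supset\alpha$ on $f$ with $\lh\beta-\lh\alpha=n$, $\rstrn[\alpha]{\beta}{s}=u^\alpha_s(n)\to U_n<\infty$ eventually, and combined with the outer IH this yields $\lim_s\rstrn{\beta}{s}<\infty$ as the maximum of finitely many finite quantities.
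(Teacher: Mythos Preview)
Your approach is essentially the paper's: an induction on $n$ showing $n$ permanent residents exist in $\REset(A){\alpha}$, using Lemma~\ref{lem:positive-finite} for shallow positive nodes, Lemma~\ref{lem:none-above} to clear old balls, and the restraint $u^\alpha_s(m)$ to block new balls from deep nodes. Your explicit outer induction on $\lh\alpha$ and simultaneous tracking of $U_n=\lim_s u^\alpha_s(n)$ are a clean packaging of what the paper does more implicitly.

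There is one small circularity you should tighten. For $\gamma$ off $f$ at depth $\leq n+1$ (in particular $\gamma\supseteq\alpha\concat 1$, where $\rstrn[\alpha]{\gamma}{\cdot}=0$), you say the finitely many problematic balls are ``absorbable into $s_1$''. But $u=u^\alpha_t(n+1)$ depends on $t\geq s_1$, so you cannot pick $s_1$ in advance to absorb all balls $\leq u$: such $\gamma$ may be visited between $s_1$ and $t$ and drop balls below $u$ that then pass through $\alpha$. The paper avoids this by taking $t$ to be the \emph{least} stage $\geq s'$ (the none-above stage) with $f_t\supseteq\alpha\concat 0$; then $\hat B_t=\eset$ holds as well, and every node $\supseteq\alpha\concat 1$ is reset at $t$ itself, so any ball they place after $t$ exceeds $t\geq u$, while balls already above $\alpha$ at $t$ are reset before reaching $\alpha$. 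With that refinement your argument goes through.
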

\begin{proof}
If \( \limsup_{s} l^{\alpha}(s)  < \infty \), then \( \mathcal{R}_\alpha \) is satisfied, and the second half follows trivially (since \(\beta \nsupseteq \alpha\, \concat\ 0\)).  Otherwise, we work after a stage large enough such that \( \alpha \) is not reset anymore and no positive requirements below \( \alpha \) place balls on the tree. We claim that for every \( n \) there is some stage \( s_n \) and elements \( x_1, x_2, \ldots x_n \) satisfying

\begin{enumerate}
\item \(f_{s_n} \supseteq \alpha \),
\item \(\forall[i\neq j\le n] [x_{i} \neq x_j ] \), and 
\item \(\forall[i\le n]\forall[t\geq s_n]  [x_i \in \REset(A_t)[t]{\alpha} ]\).
\end{enumerate}

By the definition of \( u^{\alpha}_s(n) \), the last equation entails that \mbox{\( s_n \geq u^{\alpha}_t(n) \)} since  element \( x_n\in \REset(A_t)[t]{\alpha} \) for every \( t \geq s_n \).

To verify the claim,  suppose \( n \) is the least failure of this claim.  Pick \( s > s_{n-1} \) large enough such  that every ball placed on the machine by \( \mathcal{P}_\beta \) with \( \beta \supset \alpha \) and \( \lh{\beta} - \lh{\alpha} < n \) which will ever enter \( A \) has already done so and at which \(  l^{\alpha}(s)  < n \) .  Now pick \( s' > s \) as given in Lemma \ref{lem:none-above} and \( t \ge s'  \) least with \( f_{t} \supseteq \alpha\concat\, 0 \).  Note that \( \hat{B_t}=\eset  \) as well.   At stage \( t \), the only balls above \( \alpha \) are those added at this very stage and those that will be reset before they get below \( \alpha \).

By the strategy given for \( \mathcal{R}_\alpha \), we know that \( l^{\alpha}(t) > l^{\alpha}(t-1) \geq n-1 \).  Let  \( x_n \) be the element that has occupied \( \REset(A_t)[t]{\alpha} \) for the longest uninterrupted time and is not equal to any of \( x_{m}\) for \(m < n \).  Since \( f_t <_L \alpha\concat[1] \), no \( \mathcal{P}_\beta \) with \( \beta \supseteq \alpha\concat[1]  \) can add balls less than \( t \) to \( A \) and, thus, cannot remove \( x_n \) from \( \REset(A){\alpha} \).  On the other hand, if \( \beta \supseteq \alpha\concat[0] \) and if \( \mathcal{P}_\beta \) is to succeed in placing any elements in \( A \), we must have \( \lh{\beta} - \lh{\alpha} \geq n \).  This  implies that, for all \( t' \geq t \), if \( x_n \in \REset(A_{t'})[t']{\alpha} \) and \( \hat{s} \) is the first stage at which \( x_n \) entered \( \REset(A){\alpha} \) and remained in until \( t' \), then \(\rstrn[\alpha]{\beta}{t'}\geq\hat{s} \).  Thus, the restraint guarantees any new balls placed on the tree cannot remove \( x_n \) from \( \REset(A){\alpha} \).  Moreover, any old balls already on the tree above \(\alpha\) are reset before they get to \(\alpha\) because  \( \hat{B_t}=\eset  \), so they cannot remove \( x_n \) from \( \REset(A){\alpha} \).  This verifies the claim, and the lemma follows immediately.
\end{proof}

\begin{lemma}\label{lem:negative-met}
\( A \) is \( 2 \)-tardy and simple.
\end{lemma}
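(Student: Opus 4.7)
The plan is to split into simplicity and $2$-tardiness, handling each requirement along the true path independently using the tools developed above.

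For simplicity I would fix $e$ and let $\alpha \subset f$ with $\lh{\alpha} = 3e+1$ implement $\mathcal{P}_e$. Assuming $\REset{\alpha}$ is infinite, I use Lemma \ref{lem:visitation-and-motion}(3) and Lemma \ref{lem:finite-restraint} to fix a stage $s_0$ after which $\alpha$ is never reset and $\rstrn{\alpha}{s}$ is bounded by some $R$. Infinitude of $\REset{\alpha}$ then produces some $x > \max(s_0,R)$ available to $\alpha$ at a stage $s$ with $\alpha \subseteq f_s$, and $\mathcal{P}_\alpha$ places $x$ at the largest node $\beta \concat 0 \subsetneq \alpha$ with $\mathcal{Q}_\beta$ a negative requirement. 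Every such $\beta$ lies on $f$ with $f(\beta) = 0$, hence has $p_\beta$ total and infinitely many $\beta$-expansionary stages, so a routine induction down the finite chain of negative predecessors of $\alpha$ shows $x$ descends to $\estr$ and enters $A$, giving $A \cap \REset{\alpha} \neq \eset$.

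For $2$-tardiness I would fix $e$ with $p_e = \recfnl{e}{}{}$ total, let $\alpha \subset f$ of length $3e$ implement $\mathcal{N}_e$ (so $f(\alpha) = 0$ and $\alpha \concat 0 \subset f$), and show that after the last reset of $\alpha$ the $2$-c.e.\ set $X^2_\alpha = X^2_{\alpha_1} - X^2_{\alpha_2}$ built by $\mathcal{N}_\alpha$ satisfies both $X^2_\alpha =^{*} \overline{A}$ and $x \in X^2_{\alpha, s} \implies x \nin A_{p_e(s)}$. For the equality I would classify balls by the $\mathcal{P}_\gamma$ that emits them: a ball from a true-path $\gamma \supsetneq \alpha$ is placed at some $\beta' \concat 0 \supseteq \alpha \concat 0$, enters $X^2_{\alpha_1}$ at the next $\alpha$-expansionary stage, and must descend through $\alpha \concat 0$ — entering $X^2_{\alpha_2}$ and leaving $X^2_\alpha$ — before it can reach $\estr$. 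The only exceptional balls are those emitted by some $\mathcal{P}_\gamma$ whose starting node is a prefix of $\alpha$: these come either from $\gamma \subsetneq \alpha$ (on $f$, contributing finitely many by Lemma \ref{lem:positive-finite}) or from $\gamma$ off the true path (visited, hence emitting, only finitely often), so only finitely many exceptions arise.

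For the delay inequality, given $x \in X^2_{\alpha, s}$ destined to enter $A$, I would consider the $\alpha$-expansionary stage $t_2 > s$ at which $x$ is placed at $\alpha \concat 0$ (entering $X^2_{\alpha_2}$) and the next $\alpha$-expansionary stage $t_3 > t_2$ at which $x$ is released. By the definition of $\alpha$-expansionary, $l^\alpha(t_3) > t_2 \geq s$; under the standard convention that $p_{\alpha,t_3}(z)\conv$ implies $p_\alpha(z) < t_3$ this yields $p_e(s) < t_3$, and since $x$ cannot reach $\estr$ before $t_3$, $x \nin A_{p_e(s)}$. The main obstacle I expect is the bookkeeping for $X^2_\alpha =^{*} \overline{A}$ — specifically, cataloguing which $\mathcal{P}_\gamma$ can emit balls that bypass $\alpha \concat 0$ and using Lemma \ref{lem:positive-finite} together with the true-path/reset structure to bound each contribution to finitely many — while the delay inequality itself is essentially forced by the definition of an $\alpha$-expansionary stage.
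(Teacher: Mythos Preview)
Your treatment of simplicity is essentially the paper's: both use Lemma~\ref{lem:finite-restraint} to bound the restraint and Lemma~\ref{lem:visitation-and-motion}(3) to find a stable stage, then let $\mathcal{P}_\alpha$ act.

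For the $2$-tardiness half there is a real gap. Your bookkeeping for $X^2_\alpha =^* \overline{A}$ rests on the claim that an off--true-path $\mathcal{P}_\gamma$ is ``visited, hence emitting, only finitely often.'' That is false: nodes to the \emph{right} of $\alpha$ can be visited infinitely often (e.g.\ $\alpha\concat 1$ whenever $f(\alpha)=0$ but stage $s$ is not $\alpha$-expansionary), and Lemma~\ref{lem:positive-finite} is stated only for nodes on the true path. Even if you repair each individual $\gamma$ --- noting that once some ball of $\mathcal{P}_\gamma$ enters $A$ the test $A_s\cap\REset[][s]{\gamma}\neq\eset$ permanently sets $f_s(\gamma)=0$ --- you still need the further observation that only \emph{finitely many} off-path $\gamma$ drop their balls at a proper prefix of $\alpha$ (because any such $\gamma$ cannot extend past the next negative level after it diverges from $\alpha$). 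Neither point is in your sketch, and the delay argument inherits the same problem: a ball that bypasses $\alpha\concat 0$ may already be in $X^2_{\alpha_1}$ and enter $A$ with no delay imposed.

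The paper avoids this classification entirely by a timing argument. A ball $y$ emitted by some $\gamma$ to the right of $\alpha$ satisfies $y > \reset{\gamma}{t}$, and since $\gamma$ is reset at every $\alpha$-visit this forces $y$ to exceed the last $\alpha$-visit before $t$; hence $y\notin X^2_{\alpha_1}$ at that moment. By the next stage $t'$ with $f_{t'}\supseteq\alpha$, either $y$ is still to the right (and is reset) or $y$ has already entered $A$ via a node below $\alpha$ --- in either case without ever having been enumerated into $X^2_{\alpha_1}$. Thus the paper shows directly that such balls never contribute to $X^2_\alpha$ at all, rather than trying to bound how many of them there are.
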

\begin{proof} 
The sets built at the \( \mathcal{N}_\alpha \) nodes along the true path witness that \( A \) is \( 2 \)-tardy.   Suppose $p_\alpha$ is total.  Let $s'$ be the last stage at which $\alpha$ is reset.  For each stage $s\ge s'$ such that  $f_s\supset\alpha$, all elements $x <s$ with $x\not\in A_s$ for which  $\curnode{x}{s}\not\subseteq\alpha$  are enumerated into $X^2_{\alpha_1}$.  By construction, any ball in $X^2_{\alpha_1}$ that passes through node $\alpha$ after stage $s'$ enters $X^2_{\alpha_2}$ and is appropriately delayed by $p_\alpha$ before entering $A$.  Consider a ball $y$ on a node $\beta'\supseteq\alpha'$ for $\alpha'\subset\alpha$ and $\beta'>_R \alpha$ at some stage $t>s'$ where $\beta\subset f_t$.  Let $t'$ be the least stage greater than $t$ such that  $f_{t'}\supset\alpha$.  If  $y$ is on a node to the right of $\alpha$ at stage $t'$, the ball $y$ is recycled (and hence is not added to  $X^2_{\alpha_1}$ at this stage).  Otherwise,  $y$  has been reset or has already entered $A$ by stage $t'$ via the node $\alpha'$ without being placed in $X^2_{\alpha_1}$ or $X^2_{\alpha_2}$. 
  By Lemma \ref{lem:finite-restraint}, if \( \REset{\beta} \) is an infinite c.e.\ set, then eventually some element in \( \REset{\beta} \) is greater than the finite value \( \lim_{s \to\infty} r(\beta, s) \) 
 and enters \( \REset{\beta} \) after the last stage at which \( \beta \) is  reset.  Thus, \( \mathcal{P}_\beta \) will succeed in making \( A \isect  \REset{\beta}  \neq \eset \).
\end{proof}

\begin{lemma}\label{lem:low2}
	\( \jjump{A} \Tleq \zerojj \)
\end{lemma}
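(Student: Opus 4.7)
The plan is to establish $\jjump{A} \Tleq \zerojj$ by showing first that the true path $f$ is $\zerojj$-computable and second that from $f$ one can read off a set Turing equivalent to $\jjump{A}$.

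For the first step, I would prove $f \Tleq \zerojj$ by induction on $n$. Since Lemma \ref{lem:lim-inf} gives $f = \liminf_s f_s$ and Lemma \ref{lem:total-function} guarantees $f$ is total, once $f\restriction n$ is known the value $f(n)$ equals $0$ if and only if there exist infinitely many $s$ with $f_s \supseteq (f\restriction n)\concat 0$. Since the approximations $f_s$ are uniformly computable, this is a $\Pi^0_2$ question and hence uniformly $\zerojj$-decidable given $f\restriction n$. Iterating yields $f \Tleq \zerojj$.

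For the second step, I would show that from $f$ one can compute $\set{e}{\card{\REset(A){e}} = \infty}$. This set is $\Pi^0_2(A)$-complete and therefore Turing equivalent to $\jjump{A}$, so $\jjump{A} \Tleq f \Tleq \zerojj$ will follow. Fix $\alpha_e = f\restriction(3e+2)$, so $\mathcal{Q}_{\alpha_e}$ implements $\mathcal{R}_e$; it suffices to show $\card{\REset(A){e}} = \infty$ iff $f(3e+2) = 0$. One direction is exactly the modified requirement $\mathcal{R}_{\alpha_e}$ verified in Lemma \ref{lem:finite-restraint}. For the converse, suppose $\REset(A){e}$ is infinite. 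Given $n$, pick $s^{*}$ so large that the first $n$ elements of $\REset(A){e}$ have appeared using $A_t$-correct uses for all $t \geq s^{*}$ and so that $\alpha_e$ is never reset after $s^{*}$. Then for every $s$ with $\alpha_e \subseteq f_s$ and $\Lstg{\alpha_e}{s} \geq s^{*}$, we have $l^{\alpha_e}(s) \geq n$. Since $\alpha_e$ is visited infinitely often, such $s$ exist for arbitrarily large values, so $\limsup_s l^{\alpha_e}(s) = \infty$, which forces $f(3e+2) = 0$.

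The main obstacle is the converse half of step two: the modified requirement $\mathcal{R}_\alpha$ is phrased in terms of $l^{\alpha}(s)$, which measures elements of $\REset(A_t)[t]{\alpha}$ at the last visiting stage $t = \Lstg{\alpha}{s}$, rather than in terms of the eventual $\REset(A){\alpha}$. One must check that use-stabilization in the true enumeration of $A$, combined with the fact that visiting stages of $\alpha_e$ along the true path are cofinal and restraints along $f$ stabilize (Lemma \ref{lem:finite-restraint}), is enough to guarantee that $l^{\alpha_e}(s)$ tracks $\card{\REset(A){e}}$ in the limit. Everything else is the standard machinery of $\Pi^0_2$ tree arguments.
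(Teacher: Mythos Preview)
Your proposal is correct and follows essentially the same approach as the paper: compute the true path $f$ from $\zerojj$ and read off $\{e : \card{\REset(A){e}}=\infty\}$ from $f$. In fact you are more careful than the paper, which simply asserts the biconditional ``$\REset(A){\lh{\alpha}}$ is infinite iff $\limsup_s l^{\alpha}(s)=\infty$'' without separately arguing the converse direction or the $\zerojj$-computability of $f$; your treatment of both points is sound and fills in exactly what the paper leaves implicit.
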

\begin{proof}

 Recall that  requirement \( \mathcal{R}_\alpha \) guarantees that \(  \REset(A){\lh{\alpha}} \) is infinite if and only if \mbox{\( \limsup_{s} l^{\alpha}(s)= \infty  \).} 
 To determine whether \( \REset(A){\lh{\alpha}} \) is infinite, \( \zerojj \) simply computes \( f(\alpha) \).
 Thus, 	\( \jjump{A} \Tleq \zerojj \).
	
\end{proof}

This completes the proof of the theorem.

\section{Open Questions}

First, we would like to know whether there are any $n$-tardy sets that are not automorphic to a complete set that do not satisfy $Q_n$. 

\begin{question}
  Does every $n$-tardy set not automorphic to a complete set satisfy
  $Q_n$?
\end{question}

We would also like to to know whether there are properly very tardy sets that are not automorphic to a complete set.  

\begin{question}
 Is there a very tardy set that is not $n$-tardy for any $n$ and is not automorphic to a
  complete set?
\end{question}

The above question could be attacked using definable properties.  We have not yet found a property that describes the properly very tardy sets, i.e., those very tardy sets that are not $n$-tardy for any $n\in\omega$.  

\begin{question}
  Find a property ${Q}_\infty$ so that if
  ${Q}_\infty(A)$ holds, then $A$ is very tardy, and find some very tardy set \( A \) that is not \( n \)-tardy for any \( n \) and satisfies \( Q_\infty(A) \).  
\end{question}

Finally, we want to know whether Theorem \ref{T:3tardy} can be extended as follows.

\begin{question}
Is there a properly  $n+1$-tardy set that is not computed by any $n$-tardy sets?
\end{question}

\newcommand{\MR}[1]{\MRhref{#1}{MR #1}}
\newcommand{\MRhref}[2]{\href{http://www.ams.org/mathscinet-getitem?mr=#1}{#2}}


\end{document}